\spnewtheorem*{thmA}{Theorem}{\bf}{\itshape}
\spnewtheorem*{thmB}{Theorem}{\bf}{\itshape}
\newcommand{\rH}{{\rm H}}
\newcommand{\rK}{{\rm K}}
\newcommand{\ab}{{\rm ab}}
\newcommand{\bT}{\mathbb T}
\newcommand{\bP}{\mathbb P}
\newcommand{\bN}{\mathbb N}
\newcommand{\bF}{\mathbb F}
\newcommand{\Gm}{\G_{\rm m}}
\newcommand{\w}{\overrightarrow{0w}}
\def\01{{\overrightarrow{01}}}
\def\10{{\overrightarrow{10}}}
\newcommand{\surj}{\twoheadrightarrow} 
\newcommand{\inj}{\hookrightarrow}
\newcommand{\AJ}{\alpha}
\DeclareMathOperator{\inv}{{\rm inv}}
\newcommand{\Z}{\mathbb{Z}}
\newcommand{\Zhat}{\hat{\mathbb{Z}}}
\newcommand{\Q}{\mathbb{Q}}
\newcommand{\Qbar}{\overline{\mathbb{Q}}}
\newcommand{\kbar}{\overline{k}}
\newcommand{\R}{\mathbb{R}}
\newcommand{\C}{\mathbb{C}}
\newcommand{\G}{\mathbb{G}}
\newcommand{\frakf}{\mathfrak{f}}
\newcommand{\proj}{\mathbb P}
\newcommand{\pmQ}{\mathbb P^1_{\mathbb{Q}} - \{0,1,\infty \}}
\newcommand{\pmk}{\mathbb P^1_{k} - \{0,1,\infty \}}
\newcommand{\pmkbar}{\mathbb P^1_{\overline{k}} - \{0,1,\infty \}}
\newcommand{\Jac}{\operatorname{Jac}}
\newcommand{\Gal}{\operatorname{Gal}}
\newcommand{\mdl}{\operatorname{mod}}
\newcommand{\Br}{\operatorname{Br}}
\newcommand{\Spec}{\operatorname{Spec}}
\newcommand{\Ker}{\operatorname{Ker}}
\newcommand{\hidden}[1]{\footnote{Hidden:  #1}}
\renewcommand{\hidden}[1]{}
\begin{document}

\setcounter{chapter}{11}
\title{On $3$-nilpotent obstructions to $\pi_1$ sections for $\pmQ$}
\titlerunning{On $3$-nilpotent obstructions to $\pi_1$ sections} 

\author{Kirsten Wickelgren\thanks{Supported by an NSF Graduate Research Fellowship, a Stanford Graduate Fellowship, and an American Institute of Math Five Year Fellowship}}

\institute{Kirsten Wickelgren \at Harvard University, Cambridge MA USA \\
\email{wickelgren@post.harvard.edu}}


\maketitle


\abstract*{We study which rational points of the Jacobian of $\proj^1_K -\{0,1,\infty\}$ can be lifted to sections of geometrically $3$-nilpotent quotients of \'etale $\pi_1$ over the absolute Galois group. This is equivalent to evaluating certain triple Massey products of elements of $K^\ast \subseteq \rH^1(G_K, \Zhat(1))$. 
For $K=\Q_p$ or $\R$, we give a complete mod $2$ calculation. This permits some mod $2$ calculations for $K = \Q$. These are computations of obstructions of Jordan Ellenberg.}

\abstract{We study which rational points of the Jacobian of $\proj^1_k -\{0,1,\infty\}$ can be lifted to sections of geometrically $3$-nilpotent quotients of \'etale $\pi_1$ over the absolute Galois group. This is equivalent to evaluating certain triple Massey products of elements of $k^\ast \subseteq \rH^1(G_k, \Zhat(1))$ or $\rH^1(G_k, \Z/2)$. For $k=\Q_p$ or $\R$, we give a complete mod $2$ calculation. This permits some mod $2$ calculations for $k = \Q$. These are computations of obstructions of Jordan Ellenberg.}

\keywords{Anabelian geometry, nilpotent approximation.}







\section{Introduction} 

The generalized Jacobian of a pointed smooth curve can be viewed as its abelian approximation. It is natural to consider non-abelian nilpotent approximations. Gro\-then\-dieck's anabelian conjectures predict that smooth hyperbolic curves over certain fields are controlled by their \'etale fundamental groups. In particular, approximating $\pi_1$ should be similar to approximating the curve. We study the effect of $2$ and $3$-nilpotent quotients of the \'etale fundamental group of $\proj^1 - \{0,1,\infty\}$ on its rational points, using obstructions of Jordan Ellenberg. 

More specifically, a pointed smooth curve $X$ embeds into its generalized Jacobian via the Abel-Jacobi map. Applying $\pi_1$ to the Abel-Jacobi map gives the abelianization of the \'etale fundamental group of $X$. Quotients by subgroups in the lower central series lie between $\pi_1(X)$ and its abelianization, giving rise to obstructions to a rational point of the Jacobian lying in the image of the Abel-Jacobi map. These obstructions were defined by Ellenberg in \cite{Ellenberg_2_nil_quot_pi_better}. 

For simplicity, first assume that $X$ is a proper, smooth, geometrically connected curve over a field $k$. The absolute Galois group of $k$ will be denoted by $$G_k = \Gal(\kbar/k)$$ where $\kbar$ denotes an algebraic closure of $k$. Assume that $X$ is equipped with a $k$ point, denoted $b$ and used as a base point; a $k$-variety will be said to be \textit{pointed} if it is equipped with a $k$-point.  The point $b$ gives rise to an Abel-Jacobi map $$\AJ: X \rightarrow \Jac X $$ from $X$ to its Jacobian, sending $b$ to the identity, and applying $\pi_1$ to $\AJ \otimes \kbar$ produces the abelianization of $\pi_1(X_{\kbar})$. For any pointed variety $Z$ over $k$, there is a natural map $$\kappa: Z(k) \rightarrow \rH^1(G_k, \pi_1(Z_{\kbar}))$$ where $Z(k)$ denotes the $k$ points of $Z$. In particular, we have the commutative diagram \begin{equation}\label{XJacH1cd} \xymatrix{ \Jac(X)(k) \ar[r] & \rH^1(G_k, \pi_1(X_{\kbar})^{\ab}) \\
X(k) \ar[u] \ar[r] & \rH^1(G_k, \pi_1(X_{\kbar})) \ar[u]}\end{equation} Any $k$ point of $\Jac(X)$ which is in the image of the Abel-Jacobi map satisfies the condition that its associated element of $\rH^1(G_k, \pi_1(X_{\kbar})^{\ab})$ lifts through the map \begin{equation}\label{H1pitoab} \rH^1(G_k, \pi_1(X_{\kbar})) \rightarrow \rH^1(G_k, \pi_1(X_{\kbar})^{\ab}).\end{equation} Therefore showing that the associated element of $\rH^1(G_k, \pi_1(X_{\kbar})^{\ab})$ does not admit such a lift obstructs this point of the Jacobian from lying on the curve. Ellenberg's obstructions are obstructions to lifting through the map (\ref{H1pitoab}). Since they obstruct a conjugacy class of sections of $\pi_1$ of $\Jac X \rightarrow \Spec k$ from being the image of a conjugacy class of sections of $\pi_1$ of $X \rightarrow \Spec k$, they are being called ``obstructions to $\pi_1$ sections" in the title.  They arise from the lower central series and are defined in~\ref{secpi1pmksec}.

More specifically, Ellenberg's obstruction $\delta_n$ is the $\rH^1 \rightarrow \rH^2$ boundary map in $G_k$ cohomology for the extension \begin{equation}\label{pinn+1extninto}1 \rightarrow [\pi]_n/[\pi]_{n+1} \rightarrow \pi/[\pi]_{n+1} \rightarrow \pi/[\pi]_{n} \rightarrow 1\end{equation} where $\pi$ is the \'etale fundamental group of $X_{\kbar}$, and
$$
\pi = [\pi]_1 \supset [\pi]_2 \supset [\pi]_3 \supset \ldots
$$ 
denotes the lower central series of $\pi$. The obstruction $\delta_n$ is regarded as a multi-valued function on $\rH^1(G_k, \pi^{\ab})$ via $\rH^1(G_k,\pi/[\pi]_{n}) \rightarrow \rH^1(G_k, \pi^{\ab})$ and also on $\Jac X (k)$ via $\kappa$. 

Now assume that $X= \proj_k^1 -\{0,1,\infty\}$ and that $k$ is a subfield of $\C$ or a completion of a number field. \hidden{In the latter case, we will also fix the associated decomposition group.} By replacing the Jacobian by the generalized Jacobian and enlarging $X(k)$ to include $k$ rational tangential base points, we obtain a commutative diagram generalizing \eqref{XJacH1cd}. The same obstructions to lifting through (\ref{H1pitoab}) define obstructions $\delta_n$ for $X$. As there is an isomorphism $$\pi \cong \langle x, y \rangle^{\wedge}$$ between $\pi$ and the profinite completion of the topological fundamental group of $\proj^1_{\C}- \{0,1, \infty \}$, bases of $$[\pi]_n/[\pi]_{n+1} \cong \Zhat(n)^{N(n)}$$ can be specified by order $n$ commutators of $x$ and $y$, decomposing the obstructions $\delta_n$ into multi-valued, partially defined maps $$ \rH^1 (G_k, \Zhat(1) \oplus \Zhat(1)) \dashrightarrow \rH^2(G_k, \Zhat(n)).$$ 

Section~\ref{delta23cohops} expresses $\delta_2$ and $\delta_3$ in terms of cup products and Massey products. For $a$ in $k^\ast$, let $\{a\}$ denote the image of $a$  in $\rH^1(G_k, \Zhat(1))$ under the Kummer map. For $(b,a)$ in $\Jac X (k) \cong (\G_m \times \G_m)(k)$, the obstruction $\delta_2$ is given \cite{Ellenberg_2_nil_quot_pi_better} by $\delta_2(b,a) = \{b\} \cup \{a\}$. It is a charming observation of Jordan Ellenberg that this computation shows that the cup product factors through $\rK_2(k)$ (Remark \ref{delta2K2sec}). The obstruction $\delta_3$ is computed by Theorem \ref{delta_3_Massey_prod_8_10} $$ \delta_{3,[[x,y],x]} (b,a) =  \langle \{-b\}, \{b\}, \{a \} \rangle$$ $$\delta_{3,[[x,y],y]} (b,a) = - \langle \{-a\}, \{a\},\{ b\} \rangle - f \cup \{a\},$$ where $f \in \rH^1(G_k, \Zhat(2))$ is associated to the monodromy between $0$ and $1$. The indeterminacy of the Massey product and the conditions required for its definition coincide with the multiple values assumed by $\delta_3$ and the condition for its definition. 

Section \ref{sectiondelta_2} contains computations of $\delta_2$, and its mod $2$ reduction. In particular, \ref{2nilseccon} provides points on which to evaluate $\delta_3$, which can be phrased as the failure of a $2$-nilpotent section conjecture for $\pmk$. Tate's computation of $\rK_2(\Q)$ gives a finite algorithm for determining whether or not $\delta_2 (b,a) = 0$ for $k=\Q$ described in \ref{delta2Qalg}.

The main results of this paper are in Section \ref{delta3sec}. The mod $2$ reduction of $\delta_3$ for a finite extension $k_v$ of $\Q_p$ with $p$ odd is computed:  

\smallskip
{\bf Theorem \ref{Prop_ploc_mod2_3nil_obs}} {\em Suppose that $\delta_2^{\mdl 2} (b,a) = 0$. Then $\delta_3^{\mdl 2} (b,a) \neq 0$  if and only if one of the following holds: \begin{itemize}
\item  $\{ -b \} = 0$ and $\{2 \sqrt{-b}\} \cup \{a\} \neq 0$. 
\item   $\{ -a \} = 0$ and $\{2 \sqrt{-a}\} \cup \{b\} + \{2\} \cup \{a\} \neq 0$. 
\item $\{b\} = \{a\}$ and $\{ 2 \sqrt{b} \sqrt{a} \} \cup \{a\} \neq 0$. 
\end{itemize}}
\noindent where equalities such as $\{ -b \} = 0$ take place in $\rH^1(G_{k_v}, \Z/2)$ and non-equalities such as  $\{2 \sqrt{-b}\} \cup a \neq 0$ take place in $\rH^2(G_{k_v}, \Z/2)$. The cocycle $$f: G_k \rightarrow [\pi]_2/([\pi]_3([\pi]_2)^2) \cong \Z/2$$ described above is known due to contributions of Anderson, Coleman, Deligne, Ihara, Kaneko, and Yukinari, and $f$'s computation is inputed to Theorem \ref{Prop_ploc_mod2_3nil_obs}. For points $$(b,a) \in (\Z - \{0\}) \times (\Z - \{0\}) \subset \Jac X (\Q_p)$$ such that $p$ divides $ab$ exactly once, the vanishing of $\delta_3^{\mdl 2}$ for $\Q_p$ can be expressed in terms of the congruence conditions

 \begin{itemize}
 \item  $\delta_2^{\mdl 2} (b,a) = 0 \iff a+b$ is a square mod $p$ 
 \item When $\delta_2^{\mdl 2} (b,a) = 0$, $\delta_3^{\mdl 2} (b,a) = 0 \iff a+b$ is a fourth power mod $p$
 \end{itemize}
 
\noindent This is Corollary~\ref{local_mod2_delta2_3_cor_version}. As the image of $X$ in its Jacobian consists of $(b,a)$ such that $b + a = 1$, and the image of the tangential points of $X$ are $(b,a)$ such that $b + a = 0$, or $b=1$, or $a=1$, we see in Corollary~\ref{local_mod2_delta2_3_cor_version} that $\delta_3^{\mdl 2}$ vanishes on the points and tangential points of $X$. Of course, $\delta_3^{\mdl 2}$ is constructed to satisfy this property, but here it is visible that $\delta_2^{\mdl 2}$ and $\delta_3^{\mdl 2}$ are increasingly accurate approximations to $X$ inside its Jacobian.

The obstruction $\delta_3^{\mdl 2}$ for $k = \R$ is computed in \ref{delta_3mod2R}. Consider $k=\Q$. Although an element of $\rH^2(G_{\Q}, \Z/2)$ is $0$ if and only if its restriction to all places, or all but one place, vanishes, the previous local calculations can only be combined to produce a global calculation when the Massey products are evaluated locally using compatible defining systems. This involves the local-global comparison map on Galois cohomology with coefficients in a $2$-nilpotent group. See Remark \ref{Poitou-Tate_remark}. In \ref{loc_global_delta3mod2(-p^3,p)}, such lifts are arranged and the local calculations are used to show that $$\delta_3^{\mdl 2} (-p^3,p)=0$$ for $k=\Q$. Proposition \ref{eval_delta_3_2_spec_lift} computes $\delta_3^{\mdl 2}$  on a specific lift of $(-p^3,p)$ for $k=\Q$, which is equivalent to the calculation of the $G_{\Q}$ Massey products with $\Z/2$ coefficients $\langle \{p^3 \}, \{-p^3 \}, \{p\} \rangle$ and $\langle \{-p\}, \{p\}, \{-p^3\} \rangle$ for any specified defining system.

{\bf Acknowledgments: } I wish to thank Gunnar Carlsson, Jordan Ellenberg, and Mike Hopkins for many useful discussions. I thank the referee for correcting sign errors in Proposition \ref{8/10_delta_3_cocycle_form} and \ref{2nilquotf}. I also thank Jakob Stix for clarifying \ref{2nilseccon}, shortening the proofs of Lemma \ref{achoose2_when_-a=0}, Propositions \ref{Prop_delta_3_2R} and \ref{delta3mod2(-p^3,p)}, and for extensive and thoughtful editing. 

\section{Ellenberg's obstructions to $\pi_1$ sections for $\pmk$}\label{secpi1pmksec}

We work with fields $k$ which are subfields of $\C$ or completions of a number field. In the latter case, fix an embedding of the number field into $\C$, as well as an algebraic closure $\kbar$ of $k$, and an embedding  $\overline{\Q} \subset \overline{k}$, where $\overline{\Q}$ denotes the algebraic closure of $\Q$ in $\C$. These specifications serve to choose maps between topological and \'etale fundamental groups, as in \eqref{pixywedeiso}.

This section defines Ellenberg's obstructions. In \ref{Tangential_base_pts}, we recall Deligne's notion of a tangential point \cite[\S 15]{Deligne} \cite{Nakamura}, and define in \eqref{eq:kappadef} the map $\kappa$ from $k$ points and tangential points to $\rH^1(G_k, \pi_1(X_{\kbar}))$. We then specialize to $X=\pmk$, give the computation of $\kappa$ composed with $$\rH^1(G_k, \pi_1(X_{\kbar})) \rightarrow \rH^1(G_k, \pi_1(X_{\kbar})^\ab)$$ in \eqref{aj(x)_is_(x,1-x)} and Lemma \ref{AbelJacobi(tgtpnt)}, and define Ellenberg's obstructions in \ref{Ellenbergob_subsection}. 

\subsection{Tangential base points, path torsors, and the Galois action}\label{Tangential_base_pts}

Let $X$ be a smooth, geometrically connected curve over $k$ with smooth compactification $X \subseteq \overline{X}$ and $x \in \overline{X}(k)$, so in particular, $x$ could be in $(\overline{X}-X)(k)$.

A local parameter $z$ at $x$ gives rise to an isomorphism $\widehat{\mathcal{O}}_{\overline{X},x} \xleftarrow{\cong} k[[z]]$, where $\widehat{\mathcal{O}}_{\overline{X},x}$ denotes the completion of the local ring of $x$. 
 
Let $\kbar$ be a fixed algebraic closure of $k$. Since we assume that $k$ has characteristic $0$, the field of Puiseux series \[
\kbar((z^\Q)) : = \cup_{n\in \Z_{>0}} \kbar ((z^{1/n}))
\]
is algebraically closed. The composition 
\[
\Spec \kbar((z^\Q))  \rightarrow \Spec k[[z]] \cong \Spec \widehat{\mathcal{O}}_{\overline{X},x} \rightarrow \overline{X} 
\]
factors through the generic point of $\overline{X}$ and thus defines a geometric point of $X$
$$b_z: \Spec  \kbar((z^\Q)) \rightarrow X$$ 
that will be called the {\em tangential base point} of $X$ at $x$ in the direction of $z$. The tangential base point $b_z$  determines an embedding 
$$ 
k(X) \subset k((z)) \subset \kbar((z^\Q)) .
$$ 

The coefficientwise action of $G_k = \Gal(\kbar/k)$ on $\cup_{n\in \Z_{>0}} \kbar ((z^{1/n}))$ gives a splitting of $G_{k((z))} \rightarrow G_k$. Combined with the embedding $k(X) \subset k((z))$, this splitting gives a splitting of $G_{k(X)} \rightarrow G_k$ and therefore a splitting of 
\[
\pi_1^{et}(X, b_z) \rightarrow G_k,
\]
see \cite[V Prop 8.2]{sga1}, and a $G_k$ action on $\pi_1^{et}(X_{\kbar}, b_z)$. 

\label{X(k)_to_H1} A \textit{geometric point associated to a $k$ point or tangential point} will mean $$x: \Spec \Omega_x \rightarrow X$$ where $\Omega_x$ is an algebraically closed extension of $\kbar$, such that either $x$ arises as a tangential base point as described above or $x$ has a $k$ point as its image. Such a geometric point determines a canonical geometric point of $X_{\kbar}$, and the associated fiber functor has a canonical $G_k$ action. A \text{path} between two such geometric points $b$ and $x$ is a natural transformation of the associated fiber functors, and the set of paths\[
\pi_1(X_{\kbar}; b,x)
\]
from $b$ to $x$ form a trivial $\pi_1^{et}(X_{\kbar},b)$ torsor whose $G_k$ action determines an element  
\[
[\pi_1(X_{\kbar}; b,x)] \in \rH^1(G_k,\pi_1^{et}(X_{\kbar},b))
\]
represented by the cocycle 
\begin{equation}\label{htpy_fixed_pt_cocycle} 
g \mapsto \gamma^{-1} \circ g(\gamma) 
\end{equation} where $\gamma$ is any path from $b$ to $x$. Composition of paths is written right to left so that $\gamma^{-1} \circ g(\gamma) $ is the path formed by first traversing $g(\gamma)$ and then $\gamma^{-1}$.

A local parameter $z$ at a point $x$ of $\overline{X}$ determines a tangent vector 
\[
\Spec k[[z]]/\langle z^2\rangle \rightarrow \overline{X}.
\] 
For $b$ or $x$ a tangental base point, the associated element of $\rH^1(G_k,\pi_1^{et}(X_{\kbar},b))$ only depends on the choice of local parameter up to the associated tangent vector. 
Furthermore, if $x$ is a $k$ tangential point which comes from a tangent vector at a point $p$ of $X$, then 
\[
[\pi_1(X_{\kbar}; b,x)]  = [\pi_1(X_{\kbar}; b,p)].
\]
This describes a map
\begin{equation} \label{eq:kappadef} 
\kappa = \kappa_{(X,b)} \ : \ X(k) \cup \bigcup_{x \in \overline{X}-X}(T_x \overline{X}(k)-\{0\}) \to \rH^1(G_k, \pi_1(X_{\kbar},b)).
\end{equation}

\begin{example}\label{kappa_Gm_example} 
\label{Kummer}  \label{Kummermodncocycle} \label{Kummermod2cocycle} 
\label{addingKummercocycles} 

The boundary map for the Kummer sequence 
\begin{equation}\label{KummerSESmodn}
1 \to \Z/n\Z(1) \to \Gm \xrightarrow{n\cdot} \Gm \to 1
\end{equation}
yields in the limit over all $n$ the Kummer map 
\begin{equation}\label{eq:Kummermap}
\ k^\ast \to \rH^1(G_k,\Zhat(1))
\end{equation}
which is represented on the level of cocycles by 
\begin{equation} \label{eq:Kummercocycle}
\sigma \mapsto \{z\}(\sigma) = \Big(\sigma(\sqrt[n]{z}) / \sqrt[n]{z}\Big)_n
\end{equation}
for any compatible choice of $n^{\rm th}$ roots of $z \in \kbar^\ast$. This cocycle, or by abuse of notation also the class it represents, will also be denoted by $z$, or denoted by $\{z\}$ if there is possible confusion. 

When $n=2$, both choices of square root of $z$ produce the same cocycle. Furthermore, canonically $\mu_2 = \Z/2\Z$ and thus we have a well-defined homomorphism 
$$ 
k^\ast \rightarrow C^1(G_k, \Z/2\Z),
$$
with $C^1(G_k, \Z/2\Z)$ the group of continuous $1$-cocycles of $G_k$ with values in $\Z/2\Z$.

For $(X,b) = (\G_m, 1)$, the map $\kappa$ is the Kummer map: for $$x \in \G_{{\rm m}, k} (k)=k^\ast,$$ choose compatible $n^{th}$ roots $\sqrt[n]{x}$ of $x$, and choose $1$ as the $n^{th}$ root of unity for each $n \in \Z_{>0}$.  These choices determine a path $\gamma$ from $1$ to $x$ as follows.  On the degree $n$ cover 
$$
p_n \ : \ \G_{{\rm m},\kbar} \rightarrow \G_{{\rm m},\kbar}
$$ 
given by $t \mapsto t^n$, the path $\gamma$ maps 
\[
\gamma \ : \ p_n^{-1}(1) \to p_n^{-1}(x)
\]
by multiplication by $\sqrt[n]{x}$. For $g \in G_k$, the path $g \gamma$ is the path sending $g1$ to $g(\sqrt[n]{x})$, thus multiplies by $g(\sqrt[n]{x})$. We conclude
\[
\kappa(x) = \gamma^{-1} \circ g(\gamma) = g (\sqrt[n]{x})/\sqrt[n]{x} = \{ x\}(g).
\] Identifying the choice of path from $1$ to $x$ with the choice of compatible $n^{\rm th}$ roots of $x$, there is an equality of cocycles $\kappa(x)= \{x \}$.

Similarly, for $w \in T_0 \proj^1_k (k)-\{0\} = k^\ast$, a compatible choice of $n^{th}$ roots $\sqrt[n]{w}$ of $w$ determines a path $\gamma$ from $1$ to $\w$, where $\w$ is the $k$ tangential point \begin{equation}\label{tgtptgmn} \Spec \kbar ((z^{\Q})) \rightarrow \Spec k[t,\frac{1}{t}]\end{equation} given by $t \mapsto  wz$, by defining $\gamma$ to map $\zeta \in \mu_n(\kbar)=p_n^{-1}(1)$ to the point of $p_n^{-1}(\w)$ given by \eqref{tgtptgmn} and $t \mapsto \sqrt[n]{w} \zeta z^{1/n}$.  For any $g \in G_k$, we have $g(\gamma) ( \zeta) =  g (\gamma(g^{-1} \zeta))$ is the path given by \eqref{tgtptgmn} and $t \mapsto (g\sqrt[n]{w}) \zeta z^{1/n}$, whence $\gamma( (g\sqrt[n]{w}) \zeta/\sqrt[n]{w}) = g(\gamma) ( \zeta)$. We conclude \[
\kappa(\w) = \gamma^{-1} \circ g(\gamma) = g (\sqrt[n]{w})/\sqrt[n]{w} = \{ w\}(g) = \kappa (w).
\] 
\end{example}

\begin{example}\label{kappa_Gm_example_tgtl_base_pt}
The map $\kappa_{(X,b)}$ depends on the choice of base point, even when $\pi_1(X_{\kbar}, b)$ is abelian and is therefore independent of $b$. In this case, if $b_1$ and $b_2$ are two geometric points associated to a $k$ point or tangential point of $X$, a straightforward cocycle manipulation shows that $$\kappa_{(X,b_2)}(x)= \kappa_{(X,b_1)}(x) - \kappa_{(X,b_1)}(b_2)$$ for any $k$ point or tangential point $x$.\hidden{Choose paths l12: b1-> b2

l2: b2 -> x
 
k_b1 (x)(g) = l12^{-1}l2^{-1} gl2 gl12 = l12^{-1} k_b2(x)(g) l^12 k_b1(b2)

Thus k_b1(x) - k_b1(b2) = k_b2(x)}  In particular, Example \ref{kappa_Gm_example} implies that $$\kappa_{(\G_m, \01)} =  \kappa_{(\G_m, 1)}.$$
\end{example}

$\kappa$ is often called a non-abelian Kummer map.

For higher dimensional geometrically connected varieties $Z$ over $k$, we will not need the notion of a tangential base point, but we will use the map 
\begin{equation} \label{eq:kappadefZ} 
\kappa = \kappa_{(Z,b)} \ : \ Z(k) \to \rH^1(G_k, \pi_1(Z_{\kbar},b))
\end{equation}
defined precisely as in the case of curves above. 

\label{Galoisactionpi} 

Let $X=\pmk= \Spec k[t,\frac{1}{t},\frac{1}{1-t}]$, and base $X$ at $\01$ as in Example~\ref{kappa_Gm_example} \eqref{tgtptgmn}. We fix an isomorphism \begin{equation}\label{pixywedeiso}\pi = \pi_1^{et}(\pmkbar, \01) \cong \langle x , y \rangle^{\wedge}\end{equation} between $\pi$ and the profinite completion of the free group on two generators as follows: recall that we assume that $k$ is a subfield of $\C$ or the completion of a number field at a place, and we have fixed $\C \supset \Qbar \subseteq \kbar$, see \ref{secpi1pmksec}. The morphisms $\C \supset \Qbar \subseteq \kbar$ and the Riemann existence theorem give an isomorphism $\pi \cong \pi_1^{top}(\proj^1_{\C} - \{0,1,\infty \}, \01)^{\wedge},$ where the base point for the topological fundamental group, also denoted $\01$, is the tangent vector at $0$ pointing towards $1$. Let $x$ be a small counterclockwise loop around $0$ based at $\01$. Let $y'$ be the pushforward of $x$ by automorphism of $\proj^1_{\C} - \{0,1,\infty \}$ given $1 \mapsto 1-t$, so in particular, $y'$ is a small loop around $1$ based at $\10$, where $\10$ is the tangent vector at $1$ pointing towards $0$. Conjugating $y'$ by the direct path along the real axis between $\10$ and $\01$ produces a loop $y$, and an isomorphism $\pi_1^{top}(\proj^1_{\C} - \{0,1,\infty \}, \01) = \langle x,y \rangle,$ giving \eqref{pixywedeiso}.

An element $\sigma \in G_k$ acts on $\pi$ by
\begin{eqnarray} \label{G_k_action_F_2} 
\sigma(x) & = & x^{\chi(\sigma)} \\
\sigma(y) & = & \frakf(\sigma)^{-1}y^{\chi(\sigma)}\frakf(\sigma) = [\frakf(\sigma)^{-1},y^{\chi(\sigma)}] y^{\chi(\sigma)}, \notag
\end{eqnarray} 
\noindent where $\mathfrak{f}: G_k \rightarrow [\pi]_2$ is a cocycle with values in  the commutator subgroup $[\pi]_2$ of $\pi$ (coming from the monodromy of the above path from $\01$ to $\10$), and $\chi: G_k \rightarrow \Zhat^*$ denotes the cyclotomic character, see \cite{Ihara_GT}. 

\subsection{The Abel-Jacobi map for $\pmk$}\label{AJpmk_to_GmGm} 

Let $X \subseteq \overline{X}$ denote a smooth curve over $k$ inside its smooth compactification. The generalized Jacobian $\Jac X$ of $X$, is the algebraic group of equivalence classes of degree $0$ divisors of $X$, where two divisors are considered equivalent if they differ by $\operatorname{Div}(\phi)$ for a rational function $\phi$ such that $\phi(p)=1$ for all $p$ in $\overline{X} - X$. It follows that $\Jac(X)$ is an extension of $\Jac (\overline{X})$ by the torus\[
\bT = \Big( \prod_{p \in \overline{X} - X} \operatorname{Res}_{k(p)/k} \G_{{\rm m}, k(p)} \Big)/ \G_{{\rm m},k}
\]
where $p$ ranges over the closed points of $\overline{X}-X$ with residue field $k(p)$, the torus 
$\operatorname{Res}_{k(p)/k} G_{m, k(p)}$ denotes the restriction of scalars of $\G_{{\rm m}, k(p)}$ to $k$, and where $\G_{{\rm m},k}$ acts diagonally. For more information on generalized Jacobians see \cite{alg_grps_class_fields}.

For $X= \pmk$, the Jacobian of $\overline{X} = \bP^1_k$ s trivial. The complement of $X$ in $\overline{X}$ consists of three rational points and
\begin{equation}\label{Jac=Gm2}
\Jac(\pmk) \cong \G_{{\rm m},k} \times \G_{{\rm m},k}.
\end{equation}
Since the fundamental group of a connected group is abelian, the fundamental group does not depend on base points. We find 
\[
\pi_1(\Jac(\pmkbar)) = \pi_1(\G_{{\rm m},\kbar},1) \times \pi_1(\G_{{\rm m},\kbar},1) = \Zhat(1) \oplus \Zhat(1).
\]
We choose an isomorphism \eqref{Jac=Gm2} by sending $\operatorname{Div}(f)$ for a rational function $f$ on $\proj^1$ to $f(0)/f(\infty) \times f(1)/f(\infty)$ in $\G_{\rm m} \times \G_{\rm m}$. 

The Abel-Jacobi map based at $\01$ 
\[
\AJ \ : \ \pmk \to \Jac(\pmk) = \G_{{\rm m},k} \times \G_{{\rm m},k}
\]
\[
t \mapsto (t,1-t)
\]
induces the abelianization 
\begin{equation}\label{piab=zhat12}
\pi = \pi_1(\pmkbar, \01) \surj \pi^\ab = \pi_1(\Jac(\pmkbar)) = \Zhat(1) \oplus \Zhat(1).
\end{equation}
\begin{remark}\label{Zhat1=Zhatchi}
From \eqref{pixywedeiso} and \eqref{G_k_action_F_2}, we have fixed an isomorphism $$\pi^{ab} \cong \Zhat(\chi)x \oplus \Zhat(\chi)y.$$ The isomorphism \eqref{piab=zhat12} above $\pi^{ab} \cong \Zhat(1) \oplus \Zhat(1)$ is the composition of the former with the isomorphism $$\Zhat(\chi) \cong \Zhat(1) := \varprojlim_n \mu_{n, \kbar}$$ corresponding to the compatible choice of roots of unity given by the action of the loop $x$ on the fiber over $\01$ on the finite \'etale covers of $ \G_{m,\kbar}$, i.e. to the choice $(\zeta_n)_{n \in \Z_{>0}}$, $\zeta_n = e^{2\pi i/n}$ of compatible primitive $n^{th}$ roots of unity. We will hereafter identify $$\Zhat(1)= \Zhat(\chi)$$ by this isomorphism, and for typographical reasons, we will use the notation $\Zhat(n)$ for $\Zhat(\chi^n)$, although the group law will be written additively. 
\end{remark}

By Example \ref{kappa_Gm_example}, the map $\kappa$ for the $k$ scheme $\Gm \times \Gm$ pointed by $(1,1)$ is  two copies of the Kummer map:\[
k^\ast \times k^\ast \rightarrow \rH^1(G_k, \Zhat(1)) \times \rH^1(G_k, \Zhat(1))
\]
\[
b \times a \mapsto \{b \} \times \{a\}.
\]

By functoriality of $\kappa$ and Example \ref{kappa_Gm_example_tgtl_base_pt}, the following diagram is commutative: \begin{equation}\label{aj(x)_is_(x,1-x)}
\xymatrix{H^1(G_k, \pi) \ar[rr] && H^1(G_k, \pi^{ab})\\
(\pmk)(k) \ar[u]^{\kappa_{\01}} \ar[rr]^{t \mapsto (t,1-t)} && (\G_m \times \G_m)(k) \ar[u]^{\kappa_{(1,1)}} }
\end{equation}



We can similarly compute the image of the $k$ tangential points of $\pmk$ in $\rH^1(G_k, \pi^{ab})$, which is what we now do (also see \cite{Ellenberg_2_nil_quot_pi_better}). We define a map 
\[
\alpha \ : \  \cup_{t = 0,1,\infty} (T_t \bP^1-\{0\})(k) \to (\G_m \times \G_m)(k) 
\]
by, for $w \in k^\ast$,
\[
\alpha(\w)  =   (w,1), \quad
\alpha(\overrightarrow{1w})  =  (1,-w), \quad
\alpha(\overrightarrow{\infty w})  =  (w^{-1},-w^{-1})
\] where $\overrightarrow{1w}$ is the pushforward of $\w$ under $t \mapsto t + 1$, and $\overrightarrow{\infty w}$ is the pushforward of $\w$ under $t \mapsto 1/t$.  

\begin{lemma} \label{AbelJacobi(tgtpnt)}
The following diagram commutes:
$$
\xymatrix@M+1ex@R-2ex{\rH^1(G_k, \pi) \ar[rr] &&\rH^1(G_k, \pi^{\ab})\\
 \cup_{t = 0,1,\infty} (T_t \bP^1-\{0\})(k) \ar[u] \ar[rr]^{\alpha} && (\G_m \times \G_m)(k) \ar[u] . } 
$$
\end{lemma}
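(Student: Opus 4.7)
The plan is to reduce the claim to the computation of $\kappa_{(\G_m,1)}$ on tangential points of $\G_m$, which is essentially covered by Examples \ref{kappa_Gm_example} and \ref{kappa_Gm_example_tgtl_base_pt}, combined with the functoriality of $\kappa$ under the Abel--Jacobi map $\AJ \colon \pmk \to \G_m \times \G_m$, $t \mapsto (t, 1-t)$. The key observation is that $\AJ$ extends naturally to tangential points: given a tangential point $b_z$ specified by a local parameter $z$, its image under a morphism $\phi$ is the tangential point of the image curve specified by the pushforward local parameter $\phi^\ast z$ (whenever $\phi$ extends to the boundary point). So the first step is to work out the image tangential points componentwise.

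Next, for each of the three cases I would substitute the local parameter and read off the components. For $\overrightarrow{0w}$ (parametrized by $t = wz$), the two coordinates of $\AJ$ are $t = wz$ and $1-t = 1-wz$; the first is a tangential point at $0 \in \overline{\G_m}$ with tangent $w$, and the second is a formal path in $\G_m$ limiting to the honest $k$-point $1$. Thus $\AJ(\overrightarrow{0w}) = (\overrightarrow{0w}, 1)$ in $\G_m \times \G_m$. For $\overrightarrow{1w}$ (parametrized by $t = 1 + wz$), one gets $\AJ(\overrightarrow{1w}) = (1, \overrightarrow{0(-w)})$, since $1-t = -wz$. For $\overrightarrow{\infty w}$, using $s = 1/t$ so that $s = wz$, one coordinate is the tangential point at $\infty \in \overline{\G_m}$ with parameter $s$, and computing $1/(1-t) = -wz(1-wz)^{-1} = -wz + O(z^2)$ shows the other coordinate is the tangential point at $\infty$ with tangent $-w$ in its local parameter. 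So $\AJ(\overrightarrow{\infty w}) = (\overrightarrow{\infty w}, \overrightarrow{\infty(-w)})$.

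The third step is to apply $\kappa_{(\G_m \times \G_m, (1,1))}$, which is the product of two Kummer maps. By Example \ref{kappa_Gm_example}, tangential points at $0$ of $\G_m$ contribute $\{w\}$ or $\{-w\}$ as appropriate, and $\kappa$ of an honest $k$-point is its Kummer class. For tangential points at $\infty$ of $\G_m$ in direction $w$ (with local parameter $s = 1/t$), I would use the automorphism $t \mapsto 1/t$ of $\G_m$, which fixes the base point $1$, sends $\overrightarrow{0w}$ to $\overrightarrow{\infty w}$, and acts by $-1$ on $\pi^{\ab}(\G_{m,\kbar}) = \Zhat(1)$; functoriality of $\kappa$ then gives $\kappa(\overrightarrow{\infty w}) = -\{w\} = \{w^{-1}\}$. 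Collecting the three cases yields $(\{w\}, 0)$, $(0, \{-w\})$ and $(\{w^{-1}\}, \{-w^{-1}\})$ respectively, matching $\kappa_{(\G_m \times \G_m, (1,1))} \circ \alpha$ by direct inspection.

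The main subtlety, and where I would be most careful, is verifying that $\AJ$ really pushes forward tangential points in the expected way and that the identification $\pi^{\ab}(\pmkbar) \cong \pi_1(\G_m \times \G_m)$ from Remark \ref{Zhat1=Zhatchi} is compatible with these computations (signs, orientation of loops around $\infty$ versus $0$). Once this bookkeeping is set up cleanly, the equality of the two compositions on the tangential generators of $\Jac X$ follows from the product decomposition of $\kappa$ and the functoriality of $\kappa$ under $\AJ$, giving the commutativity of the diagram.
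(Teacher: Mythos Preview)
Your approach is correct and reaches the conclusion by a somewhat different organization than the paper. The paper first reduces the cases $t=1$ and $t=\infty$ to the case $t=0$ by invoking the two automorphisms $t \mapsto 1-t$ and $t \mapsto 1/t$ of $X=\pmk$ together with the corresponding automorphisms $(b,a) \mapsto (a,b)$ and $(b,a) \mapsto (1/b,-a/b)$ of $\Jac X$, and then treats only the $t=0$ case, where it proves explicitly (by exhibiting a Galois-equivariant path) that the tangential point $\overrightarrow{1(-w)}$ at the interior point $1\in\Gm$ has the same $\kappa$-image as the honest point $1$. You instead handle all three boundary points directly by pushing the local parameter through $\AJ$; for the $t=\infty$ case this forces you to compute $\kappa_{(\Gm,1)}$ on tangential points at $\infty$ of $\Gm$, which you then do via the inversion automorphism of $\Gm$. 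Both arguments rest on the same ingredients---functoriality of $\kappa$, the general fact stated just before \eqref{eq:kappadef} that tangential points based at interior points yield the same class as the underlying honest point, and Example~\ref{kappa_Gm_example}---but the paper front-loads the symmetry reduction on $X$, while you postpone the use of symmetry to $\Gm$ and only need it for the single infinity case. The paper's version is a bit more economical; yours is more explicit and makes the pushforward of each tangential point visible.
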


\begin{proof}
The commutativity of the two diagrams
\[
\hbox{
\xymatrix{X \ar[rr]^{\alpha}  \ar[d]^{t \mapsto 1-t} && \Jac X \ar[d]^{(b,a) \mapsto (a,b)} \\ 
X \ar[rr]^{\alpha} && \Jac X } 
} \qquad 
\hbox{
\xymatrix{X \ar[rr]^{\alpha}  \ar[d]^{t \mapsto 1/t} && \Jac X \ar[d]^{(b,a) \mapsto (\frac{1}{b},\frac{-a}{b})} \\ 
X \ar[rr]^{\alpha} && \Jac X } 
}
\]
reduces the lemma for $t=1$ or $\infty$ (respectively) to the case $t=0$.

By functoriality of $\kappa$ applied to the Abel-Jacobi map $t \mapsto (t, 1-t)$, we have that the image of $\kappa_{(\pmk, \01)} (\w)$ in $\rH^1(G_k, \pi^{\ab})$ is $$\kappa_{(\Gm \times \Gm, \01 \times 1)}(\w, \overrightarrow{1(-w)}) = \kappa_{(\Gm,\01)}(\w) \times \kappa_{(\Gm,1)}(\overrightarrow{1(-w)}).$$ The geometric point $\overrightarrow{1(-w)}$ factors through $1-w \frac{\partial}{\partial t} : \Spec \overline{k} [[z]] \rightarrow \G_{{\rm m},\kbar}$ \[ t \mapsto 1 -w z.\] The quotient map $\Spec \kbar \rightarrow \Spec \kbar [[z]]$ given by $z \mapsto 0$ gives a bijection between the fiber over $1-w \frac{\partial}{\partial t}$ and the fiber over $1$ of the multiplication by $n$ cover $$p_n: \G_{{\rm m},\kbar} \rightarrow \G_{{\rm m},\kbar}.$$ These bijections determine a Galois equivariant path between $1$ and $\overrightarrow{1(-w)}$ showing that $$\kappa_{(\Gm,1)}(\overrightarrow{1(-w)}) = \kappa_{(\Gm,1)}(1) .$$ The lemma now follows from Examples \ref{kappa_Gm_example} and \ref{kappa_Gm_example_tgtl_base_pt}. 
\qed \end{proof}

\subsection{Ellenberg's obstructions}\label{Ellenbergob_subsection} Let $\pi$ be $\pi_1(\proj^1_{\overline{k}} - \{0,1,\infty \}, \01)$ or more generally $\pi$ can be any profinite group with a continuous $G_k$ action, e.g. the \'etale fundamental group of a $k$-variety after base change to $\overline{k}$.

The lower central series of $\pi$ is the filtration of closed characteristic subgroups $$ \pi= [\pi]_1 \supset [\pi]_2 \supset \ldots \supset [\pi]_n \supset \ldots $$ where the commutator is defined $[x,y] = x y x^{-1} y^{-1}$, and $[\pi]_{n+1} = \overline{[\pi, [\pi]_n]}$ is the closure of the subgroup generated by commutators of elements of $[\pi]_n$ with elements of $\pi$. 

The central extension $$1 \rightarrow [\pi]_n/[\pi]_{n+1} \rightarrow \pi/[\pi]_{n+1} \rightarrow \pi/[\pi]_{n} \rightarrow 1$$ gives rise to a boundary map in continuous group cohomology $$\delta_n: \rH^1(G_k,\pi/[\pi]_{n} ) \rightarrow \rH^2(G_k,[\pi]_n/[\pi]_{n+1} )$$ that is part of an exact sequence of pointed sets (see for instance \cite[I 5.7]{Serre_gal_coh}), \begin{align*}1 \rightarrow &([\pi]_n/[\pi]_{n+1})^{G_k}  \rightarrow (\pi/[\pi]_{n+1})^{G_k}  \rightarrow (\pi/[\pi]_{n})^{G_k} \\
 \rightarrow &\rH^1(G_k,[\pi]_n/[\pi]_{n+1})  \rightarrow \rH^1(G_k,\pi/[\pi]_{n+1}) \rightarrow \rH^1(G_k,\pi/[\pi]_{n}) \\
 \rightarrow &\rH^2(G_k,[\pi]_n/[\pi]_{n+1}).\end{align*}

The $\delta_n$ give a series of obstructions to an element of $\rH^1(G_k, \pi/[\pi]_2)$ being the image of an element of $ \rH^1(G_k, \pi)$, thereby also providing a series of obstructions to a rational point of the Jacobian coming from a rational point of the curve: to a given element $x$ of $\rH^1(G_k, \pi/[\pi]_2)$, if $\delta_2 (x) \neq 0$, then $x$ is not the image of an element of $ \rH^1(G_k, \pi)$. Otherwise, $x$ lifts to $\rH^1(G_K, \pi/[\pi]_3)$. Apply $\delta_3$ to all the lifts of $x$. If $\delta_3$ is never $0$, then $x$ is not the image of an element of $ \rH^1(G_k, \pi)$. Otherwise, $x$ lifts to $\rH^1(G_k, \pi/[\pi]_4)$, and so on. 
\begin{definition}
For $x$ in $\rH^1(G_k, \pi/[\pi]_2)$, say that $\delta_n x = 0$ if $x$ is in the image of \begin{equation}\label{Hn+1toH2def}\rH^1(G_k, \pi/[\pi]_{n+1}) \rightarrow \rH^1(G_k, \pi/[\pi]_{2}).\end{equation} Otherwise, say $\delta_n x \neq 0$.
\end{definition}
Let $X=\pmk$, or more generally $X$ could be a smooth, geometrically connected, pointed curve over $k$, with an Abel-Jacobi map $X \rightarrow \Jac X$. As we are interested in obstructing points of the Jacobian from lying on $X$, it is convenient to identify a rational point of $\Jac X$ with its image under $\kappa$ cf. \eqref{eq:kappadef}. 
\begin{definition}\label{def:deltan=0Jacpt}
For a $k$-point $x$ of $\Jac X$, say that $\delta_n x = 0$ if $\kappa_{(\Jac, 0)} x$ is in the image of \eqref{Hn+1toH2def}, where $0$ denotes the identity of $\Jac X$. Otherwise, say $\delta_n x \neq 0$.
\end{definition}

For $k$ a number field, and $K$ the completion of $k$ at a place $\nu$, the obstruction $\delta_n$ for $K$ will sometimes be denoted $\delta_n^{\nu}$, and applied to elements of $\rH^1(G_k, \pi/[\pi]_n)$; it is to be understood that one first restricts to $\rH^1(G_K, \pi/[\pi]_n)$. In other words, given $x$ in $\rH^1(G_k, \pi/[\pi]_2)$, the meaning of $\delta_n^{\nu} x = 0$ is that there exists $x_{n+1}$ in $\rH^1(G_K, \pi/[\pi]_{n+1})$ lifting the restriction of $x$ to $H^1(G_K, \pi/[\pi]_2)$. For $x$ a point of $\Jac X(k)$, the meaning of $\delta_n^{\nu} x = 0$ is that $\delta_n^{\nu} \kappa_{(\Jac X, 0)} x = 0$. This is equivalent to taking the image of $x$ under $\Jac X (k) \rightarrow \Jac X_K (K)$ and applying Definition \ref{def:deltan=0Jacpt} with $K$ as the base field.
  
\label{deltan2:def}Any filtration of $\pi$ by characteristic subgroups such that successive quotients give rise to central extensions produces an analogous sequence of obstructions. For instance, consider the lower exponent $2$ central series $$
\pi= [\pi]^2_1 \supset [\pi]^2_2 \supset \ldots \supset [\pi]^2_n \supset \ldots ,
$$ 
defined inductively by
\[
[\pi]^2_{n+1} = \overline{[\pi,[\pi]^2_n]\cdot ([\pi]^2_n)^2}
\] where $[\pi,[\pi]^2_n]\cdot ([\pi]^2_n)^2$ denotes the subgroup generated by the indicated commutators and the squares of elements of $[\pi]^2_n$. The resulting obstructions are denoted $\delta_n^2$, and will also be evaluated on $\Jac X(k)$ in the following manner: $\pi^\ab$ maps to $(\pi/[\pi]_{n+1}^2)^\ab$, giving a map $$\rH^1(G_K, \pi^{\ab}) \rightarrow \rH^1(G_K, (\pi/[\pi]_{n+1}^2)^{\ab}),$$ where either $K=k$ or $K$ is the completion of a number field $k \subset \C$ at a place $v$ as above. Precomposing with $\kappa$ for the Jacobian \eqref{eq:kappadef} gives a map $$\Jac(X)(k) \rightarrow H^1(G_K, (\pi/[\pi]_{n+1}^2)^{\ab}).$$ For $x$ in $\Jac(X)(k)$, say $\delta_n^2 x = 0$ (respectively $\delta_n^{(2,v)} x = 0$) if there exists $x_{n+1}$ in $\rH^1(G_K, \pi/[\pi]_{n+1}^2)$ such that $x$ and $x_{n+1}$ have equal image in $\rH^1(G_K, (\pi/[\pi]_{n+1}^2)^{ab})$. Otherwise, say $\delta_n^2 x \neq 0$ (respectively $\delta_n^{(2,v)} x \neq 0$).  The obstruction $\delta_n^{\nu}$ for $\nu$ the place $2$ will not be considered, so the notation $\delta_n^2$ will not be ambiguous. Obstructions $\delta_n^m$ corresponding to the lower exponent $m$ central series, $$[\pi]^m_{n+1} = \overline{[\pi,[\pi]^m_n]\cdot ([\pi]^m_n)^m}$$ are defined similarly.

As one final note of caution, $\rH^1(G_K,\pi/[\pi]_{n} )$ is in general only a pointed set. Furthermore, even for $n=2$, the map $\delta_2$ is not a homomorphism, see Proposition \ref{delta2cocycle}. 
 
\section{The obstructions $\delta_2$ and $\delta_3$ as cohomology operations}\label{delta23cohops} 

We express $\delta_2$ and $\delta_3$ for $\proj^1_{k} - \{0,1,\infty\}$ in terms of cohomology operations, where $k$ is a subfield of $\C$ or the completion of a number field at a place; in the latter case, fix an embedding of the number field into $\C$ and an embedding $\overline{\Q} \subset \overline{k}$, giving the isomorphism $\pi=\pi_1^{et}(\pmkbar, \01) \cong \langle x,y \rangle^{\wedge}$ of \eqref{pixywedeiso}. We will use the following notation:  

\label{defC*(G,A)} For elements $x$ and $y$ of a group, let $[x,y] = x y x^{-1} y^{-1} $ denote their commutator. For a profinite group $G$ and a profinite abelian group $A$ with a continuous action of $G$, let $(C^*(G,A), D)$ be the complex of inhomogeneous cochains of $G$ with coefficients in $A$ as in \cite[I.2~p. 14]{coh_num_fields}. For $c \in C^p(G, A)$ and $d \in C^q(G, A')$, where $A'$ is a profinite abelian group with a continuous action of $G$, let $c \cup d$ denote the cup product $c \cup d \in C^{p+q}(G, A \otimes A')$  $$(c \cup d)(g_1,\ldots,g_{p+q}) = c(g_1,\ldots,g_p) \otimes g_1\cdots g_pd(g_{p+1},\ldots, g_{p+q}),$$ which induces a well defined map on cohomology, and gives $C^*(G,A)$ the structure of a differential graded algebra, for $A$ a commutative ring, via $A \otimes A \rightarrow A$. For a profinite group $Q$, no longer assumed to be abelian, the continuous $1$-cocycles 
\[
Z^1(G_k,Q) = \{s: G_k \rightarrow Q | s \textrm{ is continuous, } s(g h) = s(g) g s(h) \}
\]
of $G_k$ with values in $Q$ form a subset of the set of continuous inhomogeneous cochains  
\[
C^1(G_k,Q) = \{s: G_k \rightarrow Q | s \textrm{ is continuous} \}.
\] See \cite[I \S5]{Serre_gal_coh} for instance. For $s \in C^1(G_k,Q)$, let $Ds: G_k \times G_k \rightarrow Q$ denote the function $Ds(g,h) = s(g) gs(h) s(gh)^{-1}$. 

\subsection{The obstruction $\delta_2$ as a cup product}  For any based curve $X$ over $k$ with fundamental group of $X_{\kbar}$ denoted $\pi_1$, \cite[Thm p 242]{Zarkhin} or \cite[Prop. 1]{Ellenberg_2_nil_quot_pi_better}) show that $$\delta_2(p + q) - \delta_2(p) - \delta_2(q) = [-,-]_* p \cup q,$$ where $[-,-]_*$ is the map on $\rH^2$ induced by the commutator $$[-,-]: \pi_1^{ab} \otimes \pi_1^{ab} \rightarrow  [\pi_1]_2/[\pi_1]_3$$ defined $$[\overline{\gamma},\overline{\ell}] \mapsto \gamma \ell \gamma^{-1} \ell^{-1},$$ where $\overline{\gamma} \in \pi_1^{ab}$ is the image of $\gamma \in \pi_1/[\pi_1]_3$ and similarly for $\ell$. It follows that $\delta_2$ is the sum of a cup product term and a linear term, after inverting $2$. For $X=\pmk$ based at $\01$, the linear term vanishes and we can avoid inverting $2$ by slightly changing what is meant by the cup-product term. This was shown by Ellenberg, who gave a complete calculation of $\delta_2$ in this case \cite[p. 11]{Ellenberg_2_nil_quot_pi_better}. Here is an alternative calculation of this $\delta_2$, showing the same result: let $\pi=\pi_1^{et}(\pmkbar, \01) \cong \langle x , y \rangle^{\wedge}$. Identify $\pi/[\pi]_2$ with $\Zhat (1) \oplus \Zhat (1)$ using the basis $\{x,y\}$, and identify $[\pi]_2/[\pi]_3$ with $\Zhat (2)$ using the basis $\{[x,y]\}$, so $\delta_2$ is identified with a map $$H^1(G_k, \Zhat (1) \oplus \Zhat (1)) \rightarrow H^2(G_k, \Zhat (2)).$$

\begin{proposition} \label{delta2cocycle} Let $p(g)= y^{a(g)} x^{b(g)}$ be a $1$-cocycle of $G_k$ with values in $\pi/[\pi]_2$, so $$b,a: G_k \rightarrow \Zhat(1)$$ are the cocycles produced by the isomorphism $\pi/[\pi]_2 \cong \Zhat(1)x \oplus \Zhat(1)y$. Then $$ \delta_2p = b \cup a. $$\end{proposition}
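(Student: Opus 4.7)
\medskip

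\noindent\textbf{Proof plan.} The strategy is the standard recipe for the boundary map of a central extension: lift the cocycle $p$ to a (generally non-cocycle) cochain valued in $\pi/[\pi]_3$, take its coboundary, and read off the answer in $[\pi]_2/[\pi]_3 \cong \Zhat(2)$.

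\emph{Step 1: choose a lift.} The obvious candidate is the same formula,
$$
\tilde p(g) \;=\; y^{a(g)}\, x^{b(g)} \;\in\; \pi/[\pi]_3,
$$
regarded now as an element of $\pi/[\pi]_3$ rather than $\pi/[\pi]_2$.

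\emph{Step 2: simplify the Galois action modulo $[\pi]_3$.} From \eqref{G_k_action_F_2}, $\sigma(y) = \mathfrak f(\sigma)^{-1} y^{\chi(\sigma)} \mathfrak f(\sigma)$. Since $\mathfrak f(\sigma) \in [\pi]_2$ and $[[\pi]_2,\pi] \subseteq [\pi]_3$, the conjugation is trivial modulo $[\pi]_3$, so in $\pi/[\pi]_3$ we simply have $\sigma(x) = x^{\chi(\sigma)}$ and $\sigma(y) = y^{\chi(\sigma)}$. Consequently
$$
g\bigl(\tilde p(h)\bigr) \;=\; y^{\chi(g)a(h)}\, x^{\chi(g)b(h)} \quad \text{in } \pi/[\pi]_3.
$$

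\emph{Step 3: compute $D\tilde p$.} By definition
$$
D\tilde p(g,h) \;=\; \tilde p(g)\, g\bigl(\tilde p(h)\bigr)\, \tilde p(gh)^{-1}
\;=\; y^{a(g)} x^{b(g)} y^{\chi(g)a(h)} x^{\chi(g)b(h)} \cdot x^{-b(gh)} y^{-a(gh)}.
$$
In $\pi/[\pi]_3$ the commutator is bilinear and central, so the identity $uv = [u,v]\cdot vu$ with $u=x^{b(g)}$, $v=y^{\chi(g)a(h)}$ yields
$$
x^{b(g)} y^{\chi(g)a(h)} \;=\; [x,y]^{\, b(g)\chi(g)a(h)}\, y^{\chi(g)a(h)} x^{b(g)}.
$$
Substituting, pulling the central factor $[x,y]^{b(g)\chi(g)a(h)}$ to the front, and using the cocycle relations $a(gh) = a(g) + \chi(g)a(h)$, $b(gh) = b(g) + \chi(g)b(h)$ to collapse the remaining $y^{a(g)+\chi(g)a(h)} x^{b(g)+\chi(g)b(h)}$ against $\tilde p(gh)^{-1}$, one obtains
$$
D\tilde p(g,h) \;=\; [x,y]^{\, b(g)\,\chi(g)\,a(h)}.
$$

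\emph{Step 4: identify with the cup product.} Under the chosen basis $\{[x,y]\}$ for $[\pi]_2/[\pi]_3 \cong \Zhat(2)$, the class $D\tilde p(g,h)$ corresponds to $b(g)\cdot\chi(g)a(h) \in \Zhat(1)\otimes\Zhat(1) = \Zhat(2)$. But this is exactly $(b\cup a)(g,h) = b(g) \otimes g\!\cdot\! a(h)$ with the formula for the cup product recorded in \ref{defC*(G,A)}. Hence $\delta_2 p = b \cup a$.

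\emph{Expected obstacle.} The only subtlety is bookkeeping: making sure the $\mathfrak f$-term in the Galois action really does disappear modulo $[\pi]_3$ (so that one may treat $\pi/[\pi]_3$ as if $y$ transformed by the cyclotomic character) and applying the bilinear commutator identity with the correct sign. Once the lift is fixed and the Galois action simplified, the remainder is a direct calculation.
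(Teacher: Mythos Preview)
Your proof is correct and follows essentially the same approach as the paper's own proof: the same set-theoretic lift $y^{a(g)}x^{b(g)}$, the same observation that $\mathfrak f$ becomes central modulo $[\pi]_3$, and the same commutator calculation yielding $[x,y]^{b(g)\chi(g)a(h)}$. The only differences are cosmetic (you pull the commutator to the left using $uv=[u,v]vu$, while the paper conjugates by $y^{a(g)}$ at the end), so there is nothing substantive to compare.
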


\begin{proof}
Sending $y^a x^b \in \pi/[\pi]_2$ to $y^a x^b \in \pi/[\pi]_3$ determines a set-theoretic section $s$ of the quotient map $\pi/[\pi]_3 \rightarrow \pi/[\pi]_2$. Then $\delta_2(p)$ is represented by the cocycle 
$$
(g,h) \mapsto \delta_2(p)(g,h) = s(p(g)) g s(p(h)) s(p(gh))^{-1}
$$ 
Using \eqref{G_k_action_F_2} and since $\frakf(g) \in [\pi]_2$ is mapped to a central element in $\pi/[\pi]_3$ we find
\[
\delta_2(p)(g,h) = \big(y^{a(g)}x^{b(g)}\big)\Big( \big(\frakf(g)^{-1}y^{\chi(g)} \frakf(g)\big)^{a(h)} x^{\chi(g)b(h)} \Big)
\big(y^{a(gh)} x^{b(gh)}\big)^{-1}
\]
\[
= \big(y^{a(g)}x^{b(g)}\big) \big(y^{\chi(g)a(h)} x^{\chi(g)b(h)}\big)\big(x^{-b(g) -\chi(g)b(h)} y^{-a(g) - \chi(g)a(h)}\big)
\]
\[
= y^{a(g)} [x^{b(g)}, y^{\chi(g)a(h)}]  y^{-a(g)} =  [x^{b(g)}, y^{\chi(g)a(h)}]  = [x,y]^{b(g) \cdot \chi(g)a(h)} 
= [x,y]^{(b \cup a)(g,h)}
\]
giving the desired result.
\qed \end{proof}

Proposition \ref{delta2cocycle} characterizes the lifts to $\rH^1(G_k, \pi/[\pi]_3)$ of an element of $$\rH^1(G_k, \pi^{ab}) \cong \rH^1(G_k, \Zhat(1)) \oplus \rH^1(G_k, \Zhat(1)):$$ let $b,a: G_k \rightarrow \Zhat(1)$ be cochains. For any $c \in C^1(G_k, \Zhat(2))$, define 
\begin{equation}\label{def(b,a)_c}
(b,a)_c: G_k \rightarrow \pi/[\pi]_3 \quad \textrm{ by } \quad (b,a)_c(g) = y^{a(g)}x^{b(g)}[x,y]^{c(g)}
\end{equation}

\begin{corollary}\label{lifts(b,a)_c_of(b,a)}Let $p(g)= y^{a(g)} x^{b(g)}$ be a $1$-cocycle of $G_k$ with values in $\pi/[\pi]_2$. The lifts of $p$ to a cocycle in $C^1(G_k, \pi/[\pi]_3)$ are in bijection with the set of cochains $c \in C^1(G_k, \Zhat(2))$ such that $$Dc = -b \cup a$$ by $$c \leftrightarrow (b,a)_c $$\end{corollary}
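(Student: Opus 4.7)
The plan is to exploit the calculation of the proof of Proposition~\ref{delta2cocycle}, where the specific set-theoretic section $s \colon \pi/[\pi]_2 \to \pi/[\pi]_3$ given by $y^a x^b \mapsto y^a x^b$ was shown to satisfy
\[
D(s \circ p)(g,h) \;=\; [x,y]^{(b\cup a)(g,h)}.
\]
Every lift of the cochain $p$ to a cochain $\tilde p \colon G_k \to \pi/[\pi]_3$ differs from $s\circ p$ pointwise by an element of the kernel $[\pi]_2/[\pi]_3 \cong \Zhat(2)\cdot[x,y]$, which is central in $\pi/[\pi]_3$. Hence there is a unique $c \in C^1(G_k,\Zhat(2))$ with $\tilde p(g) = y^{a(g)} x^{b(g)}[x,y]^{c(g)} = (b,a)_c(g)$, giving a bijection between lifts $\tilde p \in C^1(G_k, \pi/[\pi]_3)$ of $p$ and cochains $c \in C^1(G_k, \Zhat(2))$. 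It remains to show that under this bijection $\tilde p$ is a $1$-cocycle if and only if $Dc = -b\cup a$.

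For this, I would simply compute $D(b,a)_c$ using centrality of $[x,y]$ and the fact that $G_k$ acts on $[x,y]$ through $\chi^2$, i.e. on $[\pi]_2/[\pi]_3 \cong \Zhat(2)$. Writing $(b,a)_c(g) = s(p(g))\cdot [x,y]^{c(g)}$ and moving the central factors past everything, one gets
\[
D(b,a)_c(g,h) \;=\; D(s\circ p)(g,h)\cdot [x,y]^{\,c(g)+\chi(g)^2 c(h)-c(gh)}
\;=\; [x,y]^{(b\cup a)(g,h)+Dc(g,h)}.
\]
Thus $D(b,a)_c = 0$ in $\pi/[\pi]_3$ if and only if $(b\cup a) + Dc = 0$ in $C^2(G_k, \Zhat(2))$, which is the asserted equation.

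There is no real obstacle here: everything reduces to bookkeeping with the central extension, the main points being (i) the centrality of $[\pi]_2/[\pi]_3$ in $\pi/[\pi]_3$, which lets us separate the $[x,y]^{c(g)}$ factor from the $s\circ p$ factor, and (ii) the identification of the Galois action on $[x,y]$ with $\chi^2$, which is what converts the shift by $c$ into the coboundary $Dc$ with coefficients in $\Zhat(2)$. The only point requiring any care is the sign: the cocycle condition demands that the contribution $Dc$ cancel the nontrivial defect $b\cup a$ of $s\circ p$ computed in Proposition~\ref{delta2cocycle}, which yields the minus sign in $Dc = -b\cup a$.
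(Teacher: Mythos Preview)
Your proof is correct and follows essentially the same approach as the paper: the paper's proof is the one-line observation $D((b,a)_c) = \delta_2(p) + Dc$, where $\delta_2(p)$ denotes the cocycle representative $[x,y]^{b\cup a}$ computed in Proposition~\ref{delta2cocycle}, and you have simply spelled out this identity and the surrounding bookkeeping in full.
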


\begin{proof}
$D((b,a)_c) = \delta_2(p) + Dc$, where $D((b,a)_c)$ is as above (cf. \ref{defC*(G,A)}), and $\delta_2(p)$ denotes its cocycle representative given in the proof of Proposition \ref{delta2cocycle}.
\qed \end{proof}

\subsection{The obstruction $\delta_3$ as a Massey product} 
Note that $\chi(g)-1$ is divisible by $2$ in $\Zhat$ for any $g \in G_k$, where $\chi$ denotes the cyclotomic character, allowing us to define $$\frac{\chi -1}{2}: G_k \rightarrow \Zhat(\chi) \quad \textrm{ by } \quad g \mapsto \frac{\chi(g) -1}{2} \in \Zhat(\chi).$$ For any compatible system of primitive $n^{th}$ roots of unity in $\Zhat(1)$ giving an identification $\Zhat(1) = \Zhat(\chi)$, and in particular for $(\zeta_n)$ determined by Remark \ref{Zhat1=Zhatchi}, we have \begin{equation}  \label{chi-1/2_is_-1_kinC} \{-1\} = \frac{\chi -1}{2} \end{equation}  in $H^1(G_k, \Zhat(1))$, where $\{-1\}$ denotes the image of $-1$ under the Kummer map. The equality \eqref{chi-1/2_is_-1_kinC} holds as an equality of cocycles in $C^1(G_k, \Zhat(1))$ when $\{-1\}$ is considered as the cocycle $G_k \rightarrow \Zhat(1)$ given by choosing as the $n^{th}$ root of $-1$, the chosen primitive $(2n)^{th}$ root of unity; this is shown by the calculation \[
\{-1\}(g) = \big(g(\zeta_{2n})/\zeta_{2n}\big)_n = \big(\zeta_{2n}^{\chi(g)-1}\big)_n = \big(\zeta_n^{\frac{\chi(g)-1}{2}}\big)_n,
\] where for an element $a \in \Zhat(1)$, the reduction of $a$ in $\Z/n(1)$ is denoted $(a)_n$.

\begin{definition} \label{profinite_bin_coef} 
\textbf{Profinite binomial coefficients} are the  maps ${ \choose m} : \Zhat \to \Zhat$ for $m \geq 0$ defined for $a  \in \Zhat$ with $a \equiv a_n \mod n$ by 
\[
{a  \choose m} \equiv  a_{m! n}(a_{m!n} -1)(a_{m!n} -2)\ldots (a_{m!n} - m + 1)/m!  \mod n
\]
for every $n \in \bN$.
\end{definition}

\begin{example}\label{Dbchoose2}
For a cocycle $b\in C^1(G_k, \Zhat(\chi))$, let ${b \choose 2}$ in $C^1(G_k, \Zhat(\chi^2))$ denote the cochain  given by $$g \mapsto {b(g) \choose 2}.$$ We have
$$
D{b \choose 2} = - (b + \frac{\chi -1}{2}) \cup b,$$ as shown the the computation:
\[
D{b \choose 2} (g,h)= {b(g) \choose 2} + \chi(g)^2 {b(h) \choose 2} -{b(gh) \choose 2} =
\]
\[
 \frac{b(g)(b(g) -1 )}{2} +  \frac{\chi(g)^2 b(h)(b(h) -1 )}{2} 
- \frac{(b(g) + \chi(g)b(h))(b(g) + \chi(g)b(h) -1)}{2} 
\]
\[
= - b(g)\chi(g)b(h) - \frac{\chi(g)^2 b(h) - \chi(g)b(h)}{2}
\]
\[
= -(b \cup b)(g,h) - (\frac{\chi - 1}{2}\cup b) (g,h).
\]
\end{example} 

\begin{example} \label{choose_rest_sqrt}\hidden{claim2_7/22/08b} Let $b$ be an element of $k^\ast$ with compatibly chosen $n^{th}$ roots $\sqrt[n]{b}$ in $\overline{k}$, giving a cocycle $b: G_k \rightarrow \Zhat(1)$ via the Kummer map. Identify $\Zhat(1) = \Zhat(\chi)$ with $(\zeta_n)_{n \in \Z_{>0}}$ from Remark \ref{Zhat1=Zhatchi}. When restricted to an element of $C^1(G_{k(\sqrt{b})}, \Z/2)$, 
\[ {b \choose 2} = \{ \sqrt{b} \} \quad \in C^1(G_{k(\sqrt{b})}, \Z/2), \] where $\{ \sqrt{b} \}$ denotes the image of $\sqrt{b}$ under the Kummer map, which is independent of the choice of $\sqrt{\sqrt{b}}$. To see this, note that $\big( \{b\}(g) \big)_4 = 2  \big( \{\sqrt{b}\}(g) \big)_4 $ is even, whence $\big( \{b\}(g) - 1\big)_2 =1,$ and the value of $\frac{1}{2}(\big( \{b\}(g) \big)_4)$ in $\Z/2$ is $\big( \{\sqrt{b}\}(g) \big)_2$. Here, as above, the reduction mod $n$ of an element $a \in \Zhat$ is denoted $(a)_n$. \end{example}

\begin{remark} Note that ${b \choose 2}$ is a cochain taking values $\Zhat(\chi^2)$, but Example \ref{choose_rest_sqrt} identifies its image in $C^1(G_{k(\sqrt{b})}, \Z/2)$ with a cocycle taking values in $\Z/2(1)$. Furthermore, the cohomology class of the image of ${ b \choose 2}$ in $\rH^1(G_{k(\sqrt{b})}, \Z/2)$ depends on the choice of $\sqrt{b}$ used to define $b: G_k \rightarrow \Zhat(1)$. Nevertheless, ${ b \choose 2}$ appears in the expressions for $\delta_3$ which will be given in Proposition \ref{8/10_delta_3_cocycle_form}; it is involved in expressions which make the choice of $\sqrt[n]{b}$ irrelevant cf. Remark \ref{remarks_on_Prop_ploc_mod2_3nil_obs}\hidden{Also see Remark 5.17 3NilP1_1_28_11SV}. In writing down elements of $\pi$ in terms of $x$ and $y$, we have identified $\Zhat(1)$ and $\Zhat(\chi)$ because monodromy around $x$ distinguishes a compatible system of roots of unity.\hidden{One could change the root of unity by changing the embeddings $\C \supset \overline{\Q} \subseteq \kbar$, but this should just change the labelings of $x$ and $y$ in $\pi_1(\pmkbar)$.} \end{remark}

Identify $\Zhat(1)$ and $\Zhat(\chi)$ using $(\zeta_n)$ as in Remark \ref{Zhat1=Zhatchi}. In particular, we can apply profinite binomial coefficients to elements of $\Zhat(1)$ or any $\Zhat(n)$.

We define a $1$-cocycle $f(\sigma) \in C^1(G_k, \hat{\Z}(2))$ by 
 \begin{equation} \label{fdef}
\mathfrak{f}(\sigma) = [x,y]^{f(\sigma)} \mod [\pi]_3
\end{equation}
where $\mathfrak{f}(\sigma)$ is the $1$-cocycle from \eqref{G_k_action_F_2}  that describes the Galois action on $\pi$.

The basis $\{ [[x,y],x], [[x,y],y] \}$  for $[\pi]_3/[\pi]_{4}$ as a $\hat{\Z}$ module decomposes $\delta_3$ into two obstructions $$\delta_{3,[[x,y],x]},\delta_{3,[[x,y],y]}: \rH^1(G_k, \pi/[\pi]_3) \rightarrow \rH^2(G_k, \hat{\Z}(3)).$$ Since an arbitrary element of $\pi/[\pi]_3$ can be written uniquely in the form $y^a x^b[x,y]^c$ for $a,b,c \in \hat{\Z}$, an arbitrary element of $C^1(G_k, \pi/[\pi]_3)$ is of the form $(b,a)_c$, as in \eqref{def(b,a)_c}. The obstruction $\delta_3$ is therefore computed by the following: 

\begin{proposition} \label{delta_3_p1minus3_cocycle} Let $(b,a)_c$ be a $1$-cocycle for $G_k$ with values in $\pi/[\pi]_3$. Then:
\begin{description}
\item[(1)] $\delta_{3,[[x,y],x]} (b,a)_c$ is represented by the cocycle 
\begin{align*}
(g,h) \quad  \mapsto \quad & c(g)\chi(g) b(h) +  {b(g) + 1\choose 2} \chi(g) a(h)\\
 & + b(g) \chi(g)^2 a(h) b(h) -   \frac{\chi(g)-1}{2} \chi(g)^2 c(h),
 \end{align*}
 \item[(2)]
$ \delta_{3,[[x,y],y]} (b,a)_c$ is represented by the cocycle
\begin{align*}
(g,h) \quad \mapsto \quad & c(g)\chi(g) a(h) + b(g) {\chi(g)a(h) + 1\choose 2} \\
& -  \frac{\chi(g)-1}{2} \chi(g)^2c(h) - f(g) \chi(g)a(h).
 \end{align*}
 \end{description}
 \end{proposition}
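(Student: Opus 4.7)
The plan is to compute $\delta_3$ directly from its definition as an $\rH^1\to\rH^2$ boundary map. I would set $P(g) = y^{a(g)} x^{b(g)}[x,y]^{c(g)}$, viewed by the same formula as a set-theoretic lift $P:G_k\to \pi/[\pi]_4$ of the $1$-cocycle $(b,a)_c \in Z^1(G_k,\pi/[\pi]_3)$. Then $\delta_3(b,a)_c$ is represented by the $2$-cocycle
\[
DP(g,h) \;=\; P(g)\cdot g(P(h))\cdot P(gh)^{-1} \;\in\; [\pi]_3/[\pi]_4,
\]
and my task is to expand this in the $\Zhat$-basis $\{[[x,y],x],\,[[x,y],y]\}$ of $[\pi]_3/[\pi]_4$, reading off the coefficients as the two cocycles in the statement.

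The computation rests on three technical ingredients that I would assemble first. \textbf{(i)} The Galois action modulo $[\pi]_4$: from \eqref{G_k_action_F_2} and \eqref{fdef}, and since $[\mathfrak{f}(\sigma),x]\in[\pi]_3$, conjugating and using $[y,[x,y]] = [[x,y],y]^{-1}$ gives
\[
\sigma(x) = x^{\chi(\sigma)},\qquad \sigma(y) \;\equiv\; y^{\chi(\sigma)}\,[[x,y],y]^{-\chi(\sigma)f(\sigma)} \pmod{[\pi]_4},
\]
and consequently $\sigma([x,y]) \equiv [x,y]^{\chi(\sigma)^2}$ modulo $[\pi]_3$-commutators whose effect is tracked by further $f$-terms. \textbf{(ii)} The collection formula in the free nilpotent group of class $3$:
\[
[x^B, y^A] \;\equiv\; [x,y]^{BA}\,[[x,y],x]^{-\binom{B}{2}A}\,[[x,y],y]^{B\binom{A}{2}} \pmod{[\pi]_4},
\]
together with the corresponding formulas for reordering $x^B y^A$ into normal form $y^A x^B [x,y]^{BA}\cdot(\text{class-}3\text{ terms})$. \textbf{(iii)} The cocycle relation already established in Corollary~\ref{lifts(b,a)_c_of(b,a)}, namely $Dc = -b\cup a$, which is what permits the leading $y^{a(gh)}x^{b(gh)}[x,y]^{c(gh)}$ term to cancel against the product of the first two factors once brought into normal form; only the $[\pi]_3/[\pi]_4$ discrepancy remains.

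With these in hand, I would carry out the multiplication in stages: first compute $g(P(h))$ mod $[\pi]_4$ using (i) (producing an extra $[[x,y],y]^{-\chi(g)f(g)\cdot \chi(g)a(h)}$ factor from conjugation of $y^{\chi a(h)}$, plus the twist $\chi^2$ on $c(h)$), then multiply $P(g)\cdot g(P(h))$ on the left, collecting $x$'s past $y$'s and $[x,y]$'s past $x,y$ via (ii). The appearance of $\binom{b(g)+1}{2}\chi(g)a(h)$ and $b(g)\binom{\chi(g)a(h)+1}{2}$ comes precisely from this collection: commuting $x^{b(g)}$ past $y^{\chi(g)a(h)}$ yields $[x,y]^{b(g)\chi(g)a(h)}$ plus class-$3$ corrections of exactly these binomial shapes (the $+1$ shift arises because one also collects the newly produced $[x,y]$ past adjacent $x$ or $y$). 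The term $-\frac{\chi(g)-1}{2}\chi(g)^2 c(h)$ appears when one moves $[x,y]^{\chi(g)^2 c(h)}$ into its canonical position and accounts, via \eqref{chi-1/2_is_-1_kinC} and Example~\ref{Dbchoose2}, for the twist by $\Zhat(\chi^3)$ versus the untwisted coefficient. Finally, the $-f(g)\chi(g)a(h)$ term in $\delta_{3,[[x,y],y]}$ is precisely the $[[x,y],y]$-contribution from the Galois twist on $y$ in (i).

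The main obstacle is \emph{not} conceptual but combinatorial: a careful, sign-sensitive collection computation in the free class-$3$ nilpotent group of rank $2$, with all three of the central-series depth, the Galois twisting by $\chi$, and the $\mathfrak{f}$-cocycle interacting simultaneously. In particular, the $[[x,y],x]$-coefficient and $[[x,y],y]$-coefficient must be isolated without contamination, and one must verify that the terms which combine into $Dc+b\cup a$ genuinely cancel (using Corollary~\ref{lifts(b,a)_c_of(b,a)}) so that what remains is a $[\pi]_3/[\pi]_4$-valued cocycle agreeing with the two displayed formulas. Once the bookkeeping is executed, the identification of the two components is immediate from the chosen basis.
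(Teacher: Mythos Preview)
Your plan is essentially the paper's own proof: choose the normal-form section $y^a x^b [x,y]^c \mapsto y^a x^b [x,y]^c$, compute $g(P(h))$ in $\pi/[\pi]_4$, and collect $P(g)\,g(P(h))\,P(gh)^{-1}$ using the rearrangement identity $x^b y^a = y^a x^b [x,y]^{ab} [[x,y],y]^{b\binom{a+1}{2}}[[x,y],x]^{a\binom{b+1}{2}}$ and its consequence $[x^\chi,y^\chi] = [x,y]^{\chi^2}[[x,y],x]^{-\chi\binom{\chi}{2}}[[x,y],y]^{-\chi\binom{\chi}{2}}$. The paper records exactly these two identities and the formula for $g(y^a x^b [x,y]^c)$ as intermediate steps, then says the result follows by combining them.

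One point in your narrative is slightly off and would confuse you if you tried to execute it as written. The term $-\tfrac{\chi(g)-1}{2}\chi(g)^2 c(h)$ does \emph{not} arise from ``moving $[x,y]^{\chi(g)^2 c(h)}$ into canonical position'' or from a weight-twist bookkeeping argument via \eqref{chi-1/2_is_-1_kinC} and Example~\ref{Dbchoose2}. Rather, it comes directly from applying $g$ to $[x,y]^{c(h)}$: one has $g([x,y]) = [x^{\chi(g)}, y^{\chi(g)}]$ modulo the $\mathfrak f$-contribution already absorbed into the $y$-twist, and the collection identity for $[x^\chi,y^\chi]$ above gives the $[[x,y],x]$ and $[[x,y],y]$ exponents $-\chi\binom{\chi}{2} = -\tfrac{\chi-1}{2}\chi^2$. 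The references you cite are used later (Proposition~\ref{8/10_delta_3_cocycle_form}), not here. Once you correct that attribution, your outline matches the paper's proof step for step.
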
 
 
 \begin{proof}
We have the following equalities in $\pi/[\pi]_4:$
\begin{equation}
\label{switch_x_y_mod_pi_4}
x^b y^a = y^a x^b [x,y]^{ab}[[x,y],y]^{b {a+1 \choose 2}}[[x,y],x]^{a {b+1 \choose 2}}
\end{equation}
Replacing $b$ and $a$ by $-a$ in (\ref{switch_x_y_mod_pi_4}), yields: 
\begin{equation}
\label{[xg,yg]to[x,y]something}
[x^a,y^a] = [x,y]^{a^2}[[x,y],x]^{-a{a \choose 2}}[[x,y],y]^{-a{a \choose 2}}
\end{equation}

For any $g \in G_k$, a straightforward computation using (\ref{G_k_action_F_2}), (\ref{[xg,yg]to[x,y]something}) and (\ref{fdef}) shows that: 
\begin{align}
\label{g(elt)}
 g(y^a x^b[x,y]^c) =  \\
 \notag y^{\chi(g)a}x^{\chi(g)b}[x,y]^{\chi(g)^2 c}[[x,y],x]^{-\frac{\chi(g)-1}{2}\chi(g)^2 c}&[[x,y],y]^{-\frac{\chi(g)-1}{2}\chi(g)^2 c-f(g)\chi(g)a}
\end{align} 

An arbitrary element of $\pi/[\pi]_3$ can be written uniquely in the form $y^a x^b[x,y]^c$ for $a,b,c \in \hat{\Z}$. Sending $y^a x^b[x,y]^c \in \pi/[\pi]_3$ to $y^a x^b[x,y]^c \in \pi/[\pi]_4$ determines a section $s$ of the quotient map $\pi/[\pi]_4 \rightarrow \pi/[\pi]_3$. Let $p=(b,a)_c$. $\delta_3 p$ is represented by the cocycle $$(g,h) \mapsto s(p(g)) g s(p(h)) s(p(gh))^{-1}$$ which gives cocycles representing $\delta_{3,[[x,y],x]} p$ and $\delta_{3,[[x,y],y]} p$. Combining (\ref{switch_x_y_mod_pi_4}) and (\ref{g(elt)}) gives the desired result.
\qed \end{proof}

We give a formula for $\delta_3$ in terms of triple Massey products of elements of $\rH^1(G_k, \Zhat(1))$.

\begin{definition} \label{triple_Massey_prod_cocycle_def} The triple Massey product $\langle \alpha, \beta, \gamma \rangle$ for $1$ cocycles $\alpha,\beta,\gamma$ such that $\alpha \cup \beta =0$ and $\beta \cup \gamma = 0$ is described by choosing cochains $A,B$ such that $DA = \alpha \cup \beta$ and $DB = \beta \cup \gamma,$ and setting $\langle \alpha, \beta, \gamma \rangle = A \cup \gamma + \alpha \cup B.$  The choice $\{A,B\}$ is the called the defining system. The triple Massey product determines a partially defined multivalued product on $\rH^1$. \end{definition} 

\begin{remark}
Results of Dwyer and Stallings \cite{Dwyer} relate the element of $$\rH^2( \pi/[\pi]_{n}, [\pi]_n/[\pi]_{n+1})$$ classifying the central extension $$1 \rightarrow [\pi]_n/[\pi]_{n+1} \rightarrow \pi/[\pi]_{n+1} \rightarrow \pi/[\pi]_{n} \rightarrow 1$$ to $n^{th}$ order Massey products. The computation of $\delta_3$ is equivalent to computing the element of $\rH^2( \pi/[\pi]_{3} \rtimes G_k, [\pi]_3/[\pi]_{4})$ classifying $$1 \rightarrow [\pi]_3/[\pi]_{4} \rightarrow \pi/[\pi]_{4} \rtimes G_k \rightarrow \pi/[\pi]_{3} \rtimes G_k\rightarrow 1.$$ \end{remark}

Because $Dc = -b \cup a$ and $D{b + 1 \choose 2} = - (b - \frac{\chi -1}{2}) \cup b$ (by the same argument in Example \ref{Dbchoose2}), the expression for $\delta_{3,[[x,y],x]} (b,a)_c$ given in Proposition \ref{delta_3_p1minus3_cocycle} looks similar to a triple Massey product $\pm \langle \{\pm b\},\{ \pm b\}, a \rangle$, except the $c \cup b$ term should be $b \cup c$. The cup product on cohomology is graded commutative, and the analogue on the level of cochains, given below in Lemma \ref{D(cb)}, allows us to change the order of $c$ and $b$, which will express $\delta_{3,[[x,y],x]} (b,a)_c$ as a Massey product. 

For cochains $c \in C^1(G_k, \Zhat(n))$ and $b \in  C^1(G_k, \Zhat(m))$, define $$cb: G_k \rightarrow \Zhat(n+m) \quad \textrm {by } (cb)(g) = c(g)b(g).$$

 \begin{lemma}\label{D(cb)} Let $c \in C^1(G_k, \Zhat(n))$ be an arbitrary cochain, and $b$ in $C^1(G_k, \Zhat(m))$ be a cocycle. Then $$(D(cb) + b \cup c + c \cup b)(g,h) = Dc(g,h) b(g) + Dc(g,h)\chi(g)^{m} b(h)$$\end{lemma}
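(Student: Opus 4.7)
The plan is to verify the identity by direct cochain computation, carefully tracking how $G_k$ acts on each Tate twist.

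First I would write out the coboundary
\[
D(cb)(g,h) \;=\; (cb)(g) \;+\; g \cdot (cb)(h) \;-\; (cb)(gh),
\]
using that $cb$ takes values in $\Zhat(n+m)$, so $g$ acts on $c(h)b(h)$ by the scalar $\chi(g)^{n+m}$. The first two terms are thus $c(g)b(g) + \chi(g)^{n+m} c(h)b(h)$, and these will eventually cancel against the corresponding terms produced by expanding $c(gh)b(gh)$.

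Next I would expand $c(gh)$ and $b(gh)$ separately. Since $b$ is a cocycle,
\[
b(gh) = b(g) + \chi(g)^m b(h),
\]
while the \emph{definition} of the coboundary gives
\[
c(gh) = c(g) + \chi(g)^n c(h) - Dc(g,h).
\]
Multiplying these out produces six terms, two of which, $c(g)b(g)$ and $\chi(g)^{n+m}c(h)b(h)$, cancel the terms coming from $(cb)(g) + g\cdot(cb)(h)$. The two mixed terms are precisely the cup products: by the formula $(c \cup b)(g,h) = c(g)\,g\cdot b(h) = c(g)\chi(g)^m b(h)$ and $(b \cup c)(g,h) = b(g)\chi(g)^n c(h)$, these contribute $-(c\cup b)(g,h) - (b\cup c)(g,h)$ to $D(cb)(g,h)$. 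The remaining two terms, coming from the $-Dc(g,h)$ factor, are exactly $Dc(g,h)\,b(g) + Dc(g,h)\,\chi(g)^m b(h)$.

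Rearranging, one reads off
\[
D(cb)(g,h) + (c\cup b)(g,h) + (b\cup c)(g,h) = Dc(g,h)\,b(g) + Dc(g,h)\,\chi(g)^m b(h),
\]
which is the claim. There is no real obstacle; the only thing to be careful about is to remember that the Galois action on $c(h) \in \Zhat(n)$ is by $\chi(g)^n$ while on $b(h) \in \Zhat(m)$ it is by $\chi(g)^m$, so that the cup products $c\cup b$ and $b\cup c$ pick up the correct power of the cyclotomic character. Note also that the cocycle hypothesis is used only on $b$, not on $c$; this is why the residual term $Dc(g,h)$ shows up linearly on the right-hand side.
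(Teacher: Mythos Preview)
Your proposal is correct and follows essentially the same approach as the paper: a direct expansion of $D(cb)(g,h)$, substitution of $b(gh)=b(g)+\chi(g)^m b(h)$ and $c(gh)=c(g)+\chi(g)^n c(h)-Dc(g,h)$, and collection of the resulting terms into the two cup products and the two $Dc$-terms. The paper's proof is exactly this computation, with the same bookkeeping of cyclotomic twists.
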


 \begin{proof}
By definition of $D$, for any $g,h \in G_k$, $$D(cb)(g,h) = (cb)(g) + \chi(g)^{n+m}(cb)(h) - (cb)(gh),$$ $$ c(gh) = c(g) + \chi(g)^n c(h) - Dc(g,h).$$ Since $b$ is a cocycle, $b(gh) = b(g) + \chi(g)^m b(h).$ Combining equations, we have \begin{align*} D(cb)(g,h) =& c(g)b(g) + \chi(g)^{n+m}c(h)b(h) \\ & - (c(g) + \chi(g)^n c(h) - Dc(g,h))(b(g) + \chi(g)^m b(h)) \\ 
 =& Dc(g,h) b(g) + Dc(g,h)\chi(g)^m b(h) - (c \cup b (g,h)+ b \cup c (g,h)) \end{align*}
 \qed \end{proof}

\begin{proposition} \label{8/10_delta_3_cocycle_form}
Let $p =(b,a)_c \in C^1(G_k, \pi/[\pi]_3)$ be a $1$-cocycle, where $(b,a)_c$ is as in the notation of \eqref{def(b,a)_c}. 
Then the following holds.
\begin{description}
\item[(1)] $\delta_{3,[[x,y],x]} (p) = - (b + \frac{\chi -1}{2}) \cup c - {b \choose 2} \cup a$,
 \item[(2)] $\delta_{3,[[x,y],y]} (p) =(a + \frac{\chi -1}{2}) \cup (ab-c) + {a \choose 2} \cup b - f \cup a$.
 \end{description}
 \end{proposition}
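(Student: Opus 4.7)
The plan is to verify at the level of inhomogeneous cochains that the representatives of $\delta_{3,[[x,y],x]}(p)$ and $\delta_{3,[[x,y],y]}(p)$ from Proposition \ref{delta_3_p1minus3_cocycle} differ from the expressions stated here by explicit coboundaries. The key inputs are Lemma \ref{D(cb)} (graded noncommutativity of the cup product on cochains), Example \ref{Dbchoose2} (the coboundary of the profinite binomial cochain), the relation $Dc = -b \cup a$ from Corollary \ref{lifts(b,a)_c_of(b,a)}, and the twisted binomial identity
$$
{\chi(g)\,a(h) \choose 2} \;=\; \chi(g)^{2}\,{a(h) \choose 2} \;+\; \frac{\chi(g)-1}{2}\,\chi(g)\,a(h)
$$
in $\Zhat$, checked by direct polynomial expansion.

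For part (1), subtract the claimed expression $-(b+\frac{\chi-1}{2}) \cup c - {b \choose 2} \cup a$ (expanded on cochains) from the representative in Proposition \ref{delta_3_p1minus3_cocycle}(1). The two copies of $\frac{\chi-1}{2} \cup c$ cancel at once, leaving $(c \cup b) + (b \cup c)$ together with $b(g)\chi(g)^{2}a(h)b(h)$ and the two binomial summands ${b(g)+1 \choose 2}\chi(g)a(h)$ and ${b(g) \choose 2}\chi(g)a(h)$. Applying Lemma \ref{D(cb)} to cochain $c$ and cocycle $b$, with $Dc = -b \cup a$, rewrites $(c \cup b) + (b \cup c) = -D(cb) - b(g)^{2}\chi(g)a(h) - b(g)\chi(g)^{2}a(h)b(h)$. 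The mixed term cancels immediately, and the elementary integer identity ${n+1 \choose 2} + {n \choose 2} = n^{2}$ cancels the remaining $-b(g)^{2}\chi(g)a(h)$. The residue is the coboundary $-D(cb)$.

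For part (2) the strategy is parallel but requires two applications of Lemma \ref{D(cb)}. The summands $-f \cup a$ and $-\frac{\chi-1}{2} \cup c$ on the right cancel on the nose against identical summands in Proposition \ref{delta_3_p1minus3_cocycle}(2). Lemma \ref{D(cb)} applied to $c$ and $a$ trades $(c \cup a) + (a \cup c)$ for $-D(ca) - a(g)b(g)\chi(g)a(h) - b(g)\chi(g)^{2}a(h)^{2}$. The quadratic correction $-b(g)\chi(g)^{2}a(h)^{2}$ combines with the twisted binomial $b(g){\chi(g)a(h)+1 \choose 2}$ to give $-b(g){\chi(g)a(h) \choose 2}$, which by the displayed identity equals $-(b \cup {a \choose 2}) - b(g)\frac{\chi(g)-1}{2}\chi(g)a(h)$. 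The pair $(b \cup {a \choose 2}) + ({a \choose 2} \cup b)$ (the $({a \choose 2} \cup b)$ term coming directly from the expansion of the right-hand side) is then replaced, via a second application of Lemma \ref{D(cb)} with cochain ${a \choose 2}$ and cocycle $b$ and with $D{a \choose 2} = -(a + \frac{\chi-1}{2}) \cup a$ from Example \ref{Dbchoose2}, by $-D({a \choose 2} b)$ plus four correction terms. These four corrections exactly cancel the remaining non-coboundary pieces, leaving $D({a \choose 2}\, b - ca)$, which establishes (2).

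The main obstacle is pure bookkeeping: every term carries a Tate twist, the non-homogeneity of ${\cdot \choose 2}$ under scaling produces the $\frac{\chi-1}{2}$-corrections, and the two nested applications of Lemma \ref{D(cb)} in part (2) must dovetail precisely so that the corrections from the two lemma applications, from the expansion of the twisted binomial, and from the expansion of $R'$ all line up with consistent signs. Since the Acknowledgments mention that the referee caught prior sign errors in this very proposition, I would write out the twisted binomial identity and each cup-product twist exponent explicitly before chaining the identities.
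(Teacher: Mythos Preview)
Your argument for part (1) is essentially identical to the paper's: both subtract an explicit coboundary $D(cb)$ from the cocycle of Proposition~\ref{delta_3_p1minus3_cocycle}(1), using Lemma~\ref{D(cb)} together with $Dc=-b\cup a$ and the elementary identity ${n+1\choose 2}-n^{2}=-{n\choose 2}$.

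For part (2) your route is genuinely different. The paper does \emph{not} repeat the cochain computation; instead it introduces the $G_k$-equivariant involution $i$ of $F_2^{\wedge}$ swapping $x$ and $y$, observes that on $[\pi]_3/[\pi]_4$ this acts by $[[x,y],x]\leftrightarrow -[[x,y],y]$, and computes $i_\ast((b,a)_c)=(a,b)_{ab-c}$. After peeling off the $f\cup a$ term (the only place where $\pi$ differs from $F_2^{\wedge}$ at this level), this yields $\delta_{3,[[x,y],y]}((b,a)_c)=-\delta_{3,[[x,y],x]}((a,b)_{ab-c})-f\cup a$, and (2) follows by substituting into the already-proved (1). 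Your approach is a second direct cochain manipulation, trading one application of Lemma~\ref{D(cb)} for two (first with $c$ and $a$, then with ${a\choose 2}$ and $b$) and invoking the twisted binomial identity ${\chi(g)a(h)\choose 2}=\chi(g)^{2}{a(h)\choose 2}+\frac{\chi(g)-1}{2}\chi(g)a(h)$ to handle the ${\chi(g)a(h)+1\choose 2}$ term. This works, and indeed the paper acknowledges in Remark~\ref{symmetry_vs_cocycle_delta_xyy} that comparing the symmetry argument with the explicit cocycle gives ``unexploited computational information''. The symmetry argument is shorter and more structural (and halves the opportunity for the sign errors you rightly flag); your direct computation is self-contained and avoids setting up the $\pi$ versus $F_2^{\wedge}$ comparison.
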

  
 \begin{proof}
 By Corollary \ref{lifts(b,a)_c_of(b,a)} and Lemma \ref{D(cb)}, we have that $-D(cb) (g,h) =$ \[ c(g) \chi(g) b(h) + b(g) \chi(g)^2 c(h) + b(g)^2 \chi(g) a(h) + b(g) \chi(g)^2 a(h)b(h).\] Subtracting this expression for $-D(cb)$ from the cocycle representing $\delta_{3,[[x,y],x]}p$ given in Proposition \ref{delta_3_p1minus3_cocycle} shows that $\delta_{3,[[x,y],x]} p$ is represented by the cocycle sending $(g,h)$ to
  \begin{align*}
 -b(g) \chi(g)^2 c(h) - b(g)^2\chi(g)a(h) +  {b(g) + 1\choose 2} \chi(g) a(h)
  -  \frac{\chi(g) -1}{2} \chi(g)^2 c(h). 
 \end{align*}
 Note that \[ - b(g)^2\chi(g)a(h) +  {b(g) + 1\choose 2} \chi(g) a(h) = \frac{ b(g)(-b(g)+1)}{2}\chi(g)a(h)\]\[ = - \big({b \choose 2} \cup a\big) (g,h ).\] Therefore $$\delta_{3,[[x,y],x]} p = - b \cup c - {b \choose 2} \cup a - \frac{\chi-1}{2} \cup c, $$ giving the claimed expression for $\delta_{3,[[x,y],x]} p$.
 
The claimed expression for $\delta_{3,[[x,y],y]}p$ follows from this formula for $\delta_{3,[[x,y],x]}p$ and a symmetry argument. Consider the action of $G_k$ on the profinite completion of the free group on two generators $F_2^{\wedge} = \langle x,y \rangle^{\wedge}$ given by $$g(x) = x^{\chi(g)} $$ $$g(y) = y^{\chi(g)}.$$ The $G_k$ action on $\pi$ described by (\ref{G_k_action_F_2}) would reduce to this action on $F_2^{\wedge}$ if $\mathfrak{f}$ where in the center of $\pi$. In particular, sending $x$ to $x$ and $y$ to $y$ induces isomorphisms of profinite groups with $G_k$ actions $$\pi/[\pi]_3 \cong  F_2^{\wedge}/[F_2^{\wedge}]_3$$ $$ [\pi]_3/[\pi]_4 \cong [F_2^{\wedge}]_3 / [F_2^{\wedge}]_4$$ Furthermore, viewing $f$ as a formal variable in the proof of Proposition \ref{delta_3_p1minus3_cocycle}, we see that Proposition \ref{delta_3_p1minus3_cocycle} implies that these isomorphisms fit into the commutative diagram \begin{equation}\label{delta_3pi-F_2}\xymatrix{ \rH^1(G_k, \pi/[\pi]_3)\ar[d]^{\cong} \ar[r]^{\delta_3 + \frak{f} \cup a }& \rH^2(G_k, [\pi]_3/[\pi]_4)\\
\rH^1(G_k, F_2^{\wedge}/[F_2^{\wedge}]_3) \ar[r]^{\delta_3 }& \rH^2(G_k, [F_2^{\wedge}]_3/[F_2^{\wedge}]_4) \ar[u]^{\cong}} \end{equation}
Let $i: F_2^{\wedge} \rightarrow F_2^{\wedge}$ be the $G_k$ equivariant involution defined by $$i(x) = y$$ $$i(y) = x.$$ Note that $i$ induces an endomorphism of the short exact sequence of $G_k$ modules $$ 1 \rightarrow [F_2^{\wedge}]_3/[F_2^{\wedge}]_4 \rightarrow F_2^{\wedge}/ [F_2^{\wedge}]_4 \rightarrow F_2^{\wedge}/ [F_2^{\wedge}]_3 \rightarrow 1.$$ Thus we have a commutative diagram $$\xymatrix{ \rH^1(G_k, F_2^{\wedge}/ [F_2^{\wedge}]_3) \ar[r]^{\delta_3}  & \rH^2(G_k, [F_2^{\wedge}]_3/ [F_2^{\wedge}]_4)\\
\rH^1(G_k, F_2^{\wedge}/ [F_2^{\wedge}]_3) \ar[r]^{\delta_3} \ar[u]^{i_*}&\rH^2(G_k, [F_2^{\wedge}]_3/ [F_2^{\wedge}]_4) \ar[u]^{i_*}} $$
Since $i$ is an involution, so is $i_*$, whence $$\delta_3 = i_* \delta_3 i_* .$$ 
With respect to the decomposition
\begin{equation}\label{F2_34_oplus}
\rH^2(G_k, [F_2^{\wedge}]_3/ [F_2^{\wedge}]_4)  = \rH^2(G_k, \Zhat(3)) \cdot [[x,y],x] \oplus \rH^2(G_k, \Zhat(3)) \cdot [[x,y],y]. 
\end{equation} 
$i_*$ acts by the matrix 
$$ 
\left( \begin{array}{cc}
0 & -1 \\
-1 & 0  \end{array} \right),
$$
or in other terms, 
\begin{equation}\label{iondeltas}
\delta_{3,[[x,y],y]} = - \delta_{3,[[x,y],x]} i_*.
\end{equation}
The map $i_\ast$ on $\rH^1(G_k,F_2^{\wedge}/[F_2^{\wedge}]_3)$ we compute as
\[
i_\ast((b,a)_c)(g) = x^{a(g)}y^{b(g)}[y,x]^{c(g)}
\]
\[
= [x^{a(g)}, y^{b(g)}] y^{b(g)} x^{a(g)} [x,y]^{-c(g)} = y^{b(g)} x^{a(g)} [x,y]^{a(g)b(g)-c(g)} = (a,b)_{ab-c}(g).
\]
Combining \eqref{delta_3pi-F_2} with \eqref{iondeltas}
\[
\delta_{3,[[x,y],y]}((b,a)_c) = \delta_{3,[[x,y],y]}^{F_2^{\wedge}}((b,a)_c) - f \cup a =  - \delta_{3,[[x,y],x]}^{F_2^{\wedge}}(i_\ast(b,a)_c) - f \cup a 
\]
\[
= - \delta_{3,[[x,y],x]}^{F_2^{\wedge}}((a,b)_{ab-c}) - f \cup a = - \delta_{3,[[x,y],x]}((a,b)_{ab-c}) - f \cup a
\]
\[
=(a + \frac{\chi -1}{2}) \cup (ab-c)  +  {a \choose 2} \cup b  - f \cup a
\]
as claimed by the proposition. In the above manipulation, we have marked obstructions corresponding to $F_2^{\wedge}$ with a superscript to avoid confusion.

\qed \end{proof}
 
\begin{remark}\label{symmetry_vs_cocycle_delta_xyy} The above symmetry argument combined with the explicit cocycle for $\delta_{3,[[x,y],y]}$ given in Proposition \ref{delta_3_p1minus3_cocycle} gives unexploited computational information.\end{remark}

\begin{theorem}\label{delta_3_Massey_prod_8_10} Let $p$ be an element of $\rH^1(G_k, \pi/[\pi]_3)$, so $p$ is represented by a cocycle $(b,a)_c \in C^1(G_k, \pi/[\pi]_3)$ in the notation of \eqref{def(b,a)_c}. Then
$$ \delta_{3,[[x,y],x]} (p) = \langle (b+ \frac{\chi -1}{2}),b,a \rangle  \quad \quad \textrm{defining system:} \{-{b \choose 2}, -c \} $$
$$ \delta_{3,[[x,y],y]} (p) =- \langle (a+ \frac{\chi -1}{2}),a,b\rangle  - f \cup a  \quad \quad \textrm{defining system:} \{ -{a \choose 2}, c-ab\}.$$
In particular, for $(b,a) \in \Jac (\pmk)(k) = k^\ast \times k^\ast$, we have $$ \delta_{3,[[x,y],x]} (b,a) =  \langle \{-b\}, b, a \rangle$$ $$\delta_{3,[[x,y],y]} (b,a) = - \langle \{-a\}, a, b \rangle - f \cup a.$$ 
\end{theorem}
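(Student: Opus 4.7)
The plan is to start from the cocycle-level expressions for $\delta_{3,[[x,y],x]}(p)$ and $\delta_{3,[[x,y],y]}(p)$ obtained in Proposition \ref{8/10_delta_3_cocycle_form} and reinterpret each as a triple Massey product by exhibiting an explicit defining system. The three key inputs already in hand are Example \ref{Dbchoose2}, which computes $D{b\choose 2} = -(b+\frac{\chi-1}{2})\cup b$; Corollary \ref{lifts(b,a)_c_of(b,a)}, which supplies $Dc = -b\cup a$ for any cocycle lift $(b,a)_c$; and Lemma \ref{D(cb)}, which governs the coboundary of products of cochains with cocycles and will handle the term $D(ab)$ that arises for the second obstruction.

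For the $[[x,y],x]$ claim, take $\alpha = b+\frac{\chi-1}{2}$, $\beta = b$, $\gamma = a$, and the candidate defining system $A = -{b\choose 2}$, $B = -c$. Example \ref{Dbchoose2} gives $DA = \alpha \cup \beta$ and Corollary \ref{lifts(b,a)_c_of(b,a)} gives $DB = \beta \cup \gamma$, so this is a valid defining system; by Definition \ref{triple_Massey_prod_cocycle_def} the associated Massey product is
\[
A \cup \gamma + \alpha \cup B = -{b\choose 2}\cup a - \big(b+\tfrac{\chi-1}{2}\big)\cup c,
\]
which is exactly the cocycle identified with $\delta_{3,[[x,y],x]}(p)$ in Proposition \ref{8/10_delta_3_cocycle_form}(1).

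For $[[x,y],y]$, take $\alpha = a+\frac{\chi-1}{2}$, $\beta = a$, $\gamma = b$, and the candidate defining system $A = -{a\choose 2}$, $B = c - ab$. Again $DA = \alpha\cup\beta$ by Example \ref{Dbchoose2}. To verify $DB = \beta \cup \gamma$, apply Lemma \ref{D(cb)} with $a$ and $b$ both cocycles; the right-hand side of that lemma vanishes, yielding $D(ab) = -(a\cup b + b\cup a)$. Combining with $Dc = -b\cup a$ gives $D(c-ab) = a\cup b = \beta\cup\gamma$. Expanding $A\cup\gamma + \alpha\cup B$, multiplying by $-1$, and carrying the correction term $-f\cup a$ from Proposition \ref{8/10_delta_3_cocycle_form}(2) matches the claimed Massey-product expression for $\delta_{3,[[x,y],y]}(p)$.

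Finally, for a rational point $(b,a) \in \Jac(\pmk)(k) = k^\ast \times k^\ast$, the assumption that $\delta_3$ is even being evaluated forces $\delta_2(b,a) = \{b\}\cup\{a\} = 0$ in $\rH^2$, so by Corollary \ref{lifts(b,a)_c_of(b,a)} one may choose a cochain $c$ with $Dc = -\{b\}\cup\{a\}$ and consider the lift $(\{b\},\{a\})_c$. The identity of cocycles $\{-1\} = \frac{\chi-1}{2}$ from \eqref{chi-1/2_is_-1_kinC}, together with the Kummer map being additive, gives $\{b\}+\frac{\chi-1}{2} = \{-b\}$ and $\{a\}+\frac{\chi-1}{2} = \{-a\}$; substituting these into the Massey-product formulas just proved converts them into $\langle \{-b\},\{b\},\{a\}\rangle$ and $-\langle\{-a\},\{a\},\{b\}\rangle - f\cup\{a\}$, respectively. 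The only subtlety is bookkeeping of signs and conventions in Definition \ref{triple_Massey_prod_cocycle_def}; the expected main obstacle is keeping the defining-system cochains aligned with the exact signs in Proposition \ref{8/10_delta_3_cocycle_form}, since the two formulas differ in which of $b$ or $a$ plays the ``outer'' role and this interacts with the $-f\cup a$ term inherited from the Galois action \eqref{G_k_action_F_2}.
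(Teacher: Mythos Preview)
Your proposal is correct and follows essentially the same approach as the paper's own proof: both verify the defining systems using Example~\ref{Dbchoose2} and Corollary~\ref{lifts(b,a)_c_of(b,a)}, invoke Lemma~\ref{D(cb)} (with both entries cocycles, so the right side vanishes) to compute $D(c-ab)=a\cup b$, and then pass to the Jacobian statement via \eqref{chi-1/2_is_-1_kinC}.
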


\begin{remark}\label{Massey_Thm_Remark} (i) As above, an element of $k^*$ also denotes its image in $\rH^1(G_k, \Zhat(1))$ under the Kummer map in the last two equations. The brackets in the notation $\{-b\},\{-a\}$ serve to distinguish between the additive inverse of $b$ in $H^1(G_k, \Zhat(1))$ and the image of $-b$ under the Kummer map. We note the obvious remark that the Kummer map is a homomorphism, because this will appear in (iii) of this remark on the level of cocycles; namely given $a,b \in k^\ast$ with compatibly chosen $n^{\rm th}$ roots $\sqrt[n]{a}$ and $\sqrt[n]{b}$, then $\{a\}+ \{b\}: G_k \rightarrow \Zhat(1)$ is the image under the Kummer map of $ab$ with $\sqrt[n]{a}\sqrt[n]{b}$ chosen as the $n^{th}$ root of $ab$. This means that if one has chosen compatible primitive roots of $-1$, as is the case by \eqref{chi-1/2_is_-1_kinC} and Remark \ref{Zhat1=Zhatchi}, the cocycle $\{ - 1 \} + \{a\}$ is different from $- \{-1\} + \{a \} $ although both give the same class in cohomology, namely the class $\{ -a\}$. 


(ii) Expressing $\delta_3$ in terms of Massey products reduces the dependency on $c$ to the choices of the defining systems. In fact, after restricting to defining systems of the appropriate form, the choice of these defining systems and the choice of lift are equivalent. More explicitly, we will say that a choice for the defining systems $\{A,B\}$ of $\langle \{-x\}, x, y \rangle$ and $\{C,D\}$ of $\langle \{-y\}, y, x \rangle$ is \textit{compatible} if $B + D = - xy$, $A = -{x \choose 2}$, and $C = -{y \choose 2}$. The choice of defining system also encompasses choosing cocycle representatives for the cohomology classes involved; the cocycle representative for $\{-x\}$ is the representative for $x$ plus $\frac{\chi -1}{2}$, and similarly for $\{ -y\}$, as in (i). Then choosing a lift of $(b,a)$ in $\rH^1(G_k, \pi/[\pi]_2)$ to $(b,a)_c$ in $\rH^1(G_k, \pi/[\pi]_3)$ is equivalent to choosing compatible defining systems for $\langle \{-b\}, b, a \rangle$ and $\langle \{-a\}, a, b \rangle$, by Corollary \ref{lifts(b,a)_c_of(b,a)} and Theorem \ref{delta_3_Massey_prod_8_10}. As we are ultimately interested in obstructing points of the Jacobian from lying on the curve, it is natural to suppress both the defining system in the Massey product and the choice of lift, and view $\delta_3$ and triple Massey products as multivalued functions on $\rH^1(G_k, \pi/[\pi]_2)$.\end{remark}   

\begin{proof} Comparing the expression for $ \delta_{3,[[x,y],x]} (b,a)_c$ of Proposition \ref{8/10_delta_3_cocycle_form}, and the equations $D (-{b \choose 2}) =  (b + \frac{\chi -1}{2}) \cup b$ of Example \ref{Dbchoose2}, and $D(-c) = b\cup a$ of Corollary \ref{lifts(b,a)_c_of(b,a)} with the definition of the triple Massey product (Definition \ref{triple_Massey_prod_cocycle_def}) shows $$ \delta_{3,[[x,y],x]} (b,a)_c =  \langle (b + \frac{\chi -1}{2}), b, a \rangle$$ with the defining system $\{-{b \choose 2}, -c \} $.

By Lemma \ref{D(cb)}, $$- D(ab) = a \cup b + b \cup a ,$$ whence $D (c-ab) = a \cup b$ by Corollary \ref{lifts(b,a)_c_of(b,a)}. Comparing $D (-{a \choose 2}) =  (a + \frac{\chi -1}{2}) \cup a$, $D (c-ab) = a \cup b$, and the expression for $\delta_{3,[[x,y],y]}(b,a)_c$ of Proposition \ref{8/10_delta_3_cocycle_form} with the definition of the triple Massey product shows $$\delta_{3,[[x,y],y]} (b,a)_c = - \langle (a + \frac{\chi -1}{2}), a, b \rangle - f \cup a$$ with defining system $\{ -{a \choose 2}, c-ab \}$. By \eqref{chi-1/2_is_-1_kinC}, we have $a + \frac{\chi -1}{2} = \{ -a \}$ and $b + \frac{\chi -1}{2} = \{ -b \}$, showing the theorem. 
\qed \end{proof} 

\section{Evaluating $\delta_2$ on $\Jac(k)$}\label{sectiondelta_2} Let $k$ be a subfield of $\C$ or a completion of a number field equipped with $\C \supset \overline{\Q} \subseteq \kbar$ as in \ref{secpi1pmksec}. Let $X=\proj_k^1 - \{0,1,\infty\}$ and recall that in \ref{AJpmk_to_GmGm} we fixed an isomorphism $\Jac(X) = \G_{m,k} \times \G_{m,k}$. Let $\pi= \pi_1(X_{\kbar}, \01)$. In \eqref{pixywedeiso}, we specified an isomorphism $\pi=\langle x,y\rangle^{\wedge}$. 

The obstruction $\delta_2$ is given on $(b,a)$ in $\Jac(X)(k)$ by $\delta_2(b,a) = b \cup a$, so evaluating $\delta_2$ is equivalent to evaluating the cup product \begin{equation} \label{H1H2cup}\rH^1(G_k, \Zhat(1)) \otimes \rH^1(G_k, \Zhat(1)) \rightarrow \rH^2(G_k, \Zhat(2)).\end{equation}  Evaluating the obstruction $\delta_2^2$ coming from the lower exponent $2$ central series (cf. \ref{Ellenbergob_subsection}) is equivalent to evaluating the mod $2$ cup product \begin{equation}\label{mod2H1H2cup} \rH^1(G_k, \Z/2) \otimes \rH^1(G_k, \Z/2) \rightarrow \rH^2(G_k, \Z/2),\end{equation} and this evaluation is recalled for $k = \Q_p$, $\R$, and $\Q$ in \ref{cup_prod_table_Q_p}-- \ref{cup_prod_GQ_Z/2}. The remainder of Section \ref{sectiondelta_2} gives evaluation results for the obstruction $\delta_2$ itself.  From the bilinearity of $\delta_2$, Proposition \ref{some_pts_in_Kerdelta_2} finds infinite families of points of $\Jac (k)$ which are unobstructed by $\delta_2$, but which are not the image of a rational point or tangential point under the Abel-Jacobi map. This is rephrased as ``the $2$-nilpotent section conjecture is false" in \ref{2nilseccon} and Proposition \ref{false2nilscprop}. These families provide a certain supply of points on which to evaluate $\delta_3$.  The subsection \ref{delta2Qalg} contains a finite algorithm for determining if $\delta_2$ is zero or not for $k=\Q$, using Tate's calculation of $\rK_2(\Q)$, and gives Jordan Ellenberg's geometric proof that the cup product factors through $\rK_2$.

\subsection{The mod $2$ cup product for $k_v$} \label{cup_prod_table_Q_p} Let $p$ be an odd prime. Let $k_v$ be a finite extension of $\Q_p$ with valuation $v: k_v^\ast \surj \Z$, integer ring $\mathcal{O}_v$ and residue field $\mathbb{F}_v$. Let $\mathfrak{p}$ be a uniformizer of $\mathcal{O}_v$ and $u \in \mathcal{O}_v$ be a unit and not a square, so $\{ \mathfrak{p}, u \}$ is a basis for the $\Z/2$ vector space $$ k_v^\ast/(k_v^\ast)^2  \cong \rH^1(G_{k_v}, \Z/2),$$ where the isomorphism follows from the Kummer exact sequence \eqref{KummerSESmodn} and Hilbert 90. By Hilbert 90, we have that $\rH^2(G_{k_v}, \Z/2)$ is the $2$ torsion of the Brauer group $\rH^2(G_{k_v}, \overline{k_v}^\ast)$ of $k_v$, which is isomorphic to $\Q/\Z$ by the invariant (see \cite[VI]{CF}, for instance), so $$\rH^2(G_K, \Z/2) \cong (\Q/\Z) [2] = (\frac{1}{2}\Z)/\Z.$$ The (distributive and commutative) mod $2$ cup product \eqref{mod2H1H2cup} is given by the table
 \begin{center}
  \begin{tabular}{c|ccc}
    $\cup$                    & $u$ && $\mathfrak{p}$  \\\hline
    $u$                          & $0$    && $1/2$    \\
    $\mathfrak{p}$      & $1/2$  && $\{-1\} \cup \mathfrak{p}$    
  \end{tabular}
  \end{center} where $\{-1\} \cup \mathfrak{p} = 0$ if $-1$ is a square in $\mathbb{F}_v$ and $\{-1\} \cup \mathfrak{p} = 1/2$ otherwise. 
  
We include a derivation of this well-known calculation: as $\rH^2(G_{k_v}, \Z/2)$ injects into $\rH^2(G_{k_v}, \overline{k_v}^\ast)$, we may calculate in the Brauer group. For $(a,b) \in k_v^\ast \oplus k_v^\ast$, define $E_{(\sqrt{a},b)} \in C^1(G_{k_v}, \overline{k_v}^\ast)$ by $$E_{(\sqrt{a},b)}(\sigma) = (\sqrt{a})^{b(\sigma)},$$ where $b: G_{k_v} \rightarrow \{0,1\}$ is defined by $(-1)^{a(\sigma)} =  (\sigma \sqrt{a})/\sqrt{a}.$  A short calculation shows that $$D E_{(\sqrt{a},b)}(\sigma, \tau) = (-1)^{a(\sigma) b (\tau)} a^{b(\sigma)b(\tau)},$$ whence $a \cup b$ in $\rH^2(G_{k_v}, \overline{k_v}^\ast)$ is represented by \begin{equation}\label{acupb_H2(Gal(krtb/k))}(\sigma, \tau) \mapsto  a^{b(\sigma)b(\tau)}.\end{equation} Let $k_v^{nr}$ denote the maximal unramified extension of $k_v$, and $v: (k_v^{nr})^\ast \rightarrow \Z$ the extension of the valuation. For $b=u$, the cocycle \eqref{acupb_H2(Gal(krtb/k))} factors through $\Gal({k_v}^{nr}/k_v)^2$, and by \cite[Chap VI 1.1 Thm 2 pg 130]{CF}, $$v_*: \rH^2(\Gal({k_v}^{nr}/k_v), (k_v^{nr})^\ast)  \rightarrow \rH^2(\Gal(k_v^{nr}/k_v), \Z)$$ is an isomorphism, showing that $\{u \} \cup \{u\} = 0$, and the invariant of $ \{\mathfrak{p}\} \cup \{u \}$ is $1/2$ as claimed (see \cite[pg 130]{CF}). To compute $\mathfrak{p} \cup \mathfrak{p}$, note that for $a = -1$, the cochain $(\sigma, \tau) \mapsto  a^{b(\sigma)b(\tau)}$ equals $b \cup b$. By the above, $a \cup b$ is also represented by $(\sigma, \tau) \mapsto  a^{b(\sigma)b(\tau)}$, so it follows that $$\{-1\} \cup \mathfrak{p} = \mathfrak{p} \cup \mathfrak{p}.$$ 
  


It follows that $\delta_2$ is non-trivial: let $X= \pmQ$. Let $\delta_2^{(2,p)}$ denote $\delta_2^2$ for $k = \Q_p$ and $X_{\Q_p}$. Consider $\delta_2^{(2,p)}$ as a function on $\Jac(X)(\Q)$ by evaluating $\delta_2^{(2,p)}$ on the corresponding $\Q_p$ point of $\Jac(X_{\Q_p})$.

\begin{corollary}
Choose $x,y$ in $\Q^*$. Let $p$ be an odd prime and $u$ be an integer which is not a quadratic residue mod $p$. Then $\delta_2^{(2,p)}(uy^2,px^2) \neq 0$, and therefore $\delta_2^2(uy^2,px^2) \neq 0$ and $\delta_2(uy^2,px^2) \neq 0$.
\end{corollary}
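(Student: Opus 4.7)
The plan is to reduce the computation to the mod $2$ cup product table that has just been established. By definition, $\delta_2^{(2,p)}$ applied to a point of $\Jac(X)(\Q)$ is obtained by first restricting to $\Q_p$, mapping into $\rH^1(G_{\Q_p},\Z/2)$ via the Kummer map composed with reduction mod $2$, and then taking the cup product into $\rH^2(G_{\Q_p},\Z/2)$. So the entire computation takes place in $\rH^\ast(G_{\Q_p},\Z/2)$, and bilinearity plus the fact that squares vanish in $\Q_p^\ast/(\Q_p^\ast)^2$ gives
\[
\{uy^2\} \cup \{px^2\} \;=\; \{u\} \cup \{p\}
\]
in $\rH^2(G_{\Q_p},\Z/2)$.

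Next I would observe that $u$ remains a non-square in $\Q_p^\ast$: this is immediate from Hensel's lemma applied to $T^2 - u$ over $\mathcal{O}_v = \Z_p$, since $u$ is by hypothesis a non-residue mod $p$. Thus $\{u\}$ is precisely the non-trivial unit class in the $\Z/2$-basis $\{\mathfrak{p},u\}$ of $\Q_p^\ast/(\Q_p^\ast)^2$ used in \ref{cup_prod_table_Q_p} (with $\mathfrak{p} = p$). Reading off the cup product table gives $\{u\} \cup \{p\} = 1/2 \neq 0$ in $\rH^2(G_{\Q_p},\Z/2) = (\tfrac{1}{2}\Z)/\Z$. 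By Proposition \ref{delta2cocycle} this is exactly $\delta_2^{(2,p)}(uy^2,px^2)$, so this obstruction is nonzero.

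Finally, the remaining two assertions are formal consequences. The mod $2$ obstruction $\delta_2^{(2,p)}$ is the restriction to the decomposition group at $p$ of the global mod $2$ obstruction $\delta_2^2$, so if the former is nonzero then so is the latter. Likewise, $\delta_2^2$ is the reduction of $\delta_2$ under the coefficient map $\Zhat(2) \twoheadrightarrow \Z/2$, so $\delta_2^2 \neq 0$ forces $\delta_2 \neq 0$. There is no real obstacle here; the content of the statement is simply the existence of a nonzero entry in the cup product table combined with bilinearity, and the only point that requires a moment's thought is Hensel's lemma to transport non-residues from $\bF_p$ to $\Q_p$.
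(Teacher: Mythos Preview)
Your proof is correct and is exactly the intended argument: the paper states the corollary without proof as an immediate consequence of the cup product table just computed, and you have spelled out precisely that deduction, including the Hensel step to lift the non-residue condition from $\bF_p$ to $\Q_p$ and the formal implications $\delta_2^{(2,p)}\neq 0 \Rightarrow \delta_2^2\neq 0 \Rightarrow \delta_2\neq 0$.
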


\subsection{The mod $2$ cohomology of $G_{\R}$}\label{H2(G_R,Z/2)}\label{R_cup_prod} By \cite[III Example 3.40 p. 250]{Hatcher} or \cite[III.1 Ex 2 p.58 p.108]{Brown_coh_groups}, there is a ring isomorphism
$$
\rH^*(G_{\R}, \Z/2) \cong \rH^*(\Z/2, \Z/2) \cong \Z/2[\alpha]
$$ 
where $\alpha$ is the nontrivial class in degree $1$, namely 
\[
\alpha = \{-1\} \in \R^\ast/(\R^\ast)^2 = \rH^1(G_{\R},\mu_2) = \rH^1(G_\R,\Z/2\Z).
\]
In particular, the cup product is an isomorphism
$$
\rH^1(G_{\R},\Z/2\Z(1)) \otimes \rH^1(G_{\R},\Z/2\Z(1)) \rightarrow \rH^2(G_{\R},\Z/2\Z(2)). 
$$ 

The map $C^2(G_{\R},\Z/2) \rightarrow \Z/2$ given by evaluating a cochain at $(\tau,\tau)$ for $\tau$ the non-trivial element of $G_{\R}$ determines an isomorphism $\rH^2(G_{\R},\Z/2) \rightarrow \Z/2$.


Let $X= \pmQ$, and let $\delta_2^{(2,\R)}$ denote $\delta_2^{2}$ for $k = \R$ and $X_{\R}$. Consider $\delta_2^{(2,\R)}$ as a function on $\Jac(X)(\Q)$ by evaluating $\delta_2^{(2,\R)}$ on the corresponding $\R$ point of $\Jac(X_{\R})$.

\begin{corollary} Let $b,a$ be elements of $\Q^*$. Then $\delta_2^{(2,\R)}(b,a) \neq 0$ if and only if $a,b <0$. If $a,b<0$, then $\delta^2_2 (b,a) \neq 0$ and $\delta_2 (b,a) \neq 0$.\end{corollary}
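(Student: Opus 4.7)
The plan is to reduce the statement to a direct computation using the formula $\delta_2(b,a) = \{b\} \cup \{a\}$ together with the completely known ring structure of $\rH^*(G_\R, \Z/2)$.

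First I would invoke the mod $2$ analogue of Proposition \ref{delta2cocycle} (as noted at the start of \ref{cup_prod_table_Q_p}), which gives $\delta_2^{(2,\R)}(b,a) = \{b\} \cup \{a\}$ in $\rH^2(G_\R, \Z/2)$, where $\{b\}$ and $\{a\}$ now denote mod $2$ Kummer classes in $\rH^1(G_\R, \Z/2)$. Since $\rH^1(G_\R, \Z/2) \cong \R^\ast/(\R^\ast)^2$ is the two-element group $\{0, \{-1\}\}$, the class $\{b\}$ vanishes exactly when $b > 0$, and equals $\alpha = \{-1\}$ when $b < 0$; similarly for $\{a\}$.

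By \ref{R_cup_prod}, $\rH^\ast(G_\R, \Z/2) \cong \Z/2[\alpha]$, so $\alpha \cup \alpha$ is the nonzero generator of $\rH^2(G_\R, \Z/2) \cong \Z/2$. Consequently, $\{b\} \cup \{a\} \neq 0$ if and only if both $\{b\}$ and $\{a\}$ are nonzero, which occurs precisely when $a,b < 0$. This proves the first assertion.

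For the second assertion, assume $a, b < 0$. The class $\delta_2^{(2,\R)}(b,a)$ is by definition the restriction of $\delta_2^2(b,a) \in \rH^2(G_\Q, \Z/2)$ along $G_\R \hookrightarrow G_\Q$; since restriction sends $0$ to $0$, the nonvanishing of the restriction forces $\delta_2^2(b,a) \neq 0$. Finally, $\delta_2^2(b,a)$ is the image of $\delta_2(b,a) \in \rH^2(G_\Q, \Zhat(2))$ under the coefficient reduction $\Zhat(2) \to \Z/2$ (both obstructions are expressed as cup products of Kummer cocycles, and coefficient reduction is compatible with cup product), so $\delta_2(b,a) \neq 0$ as well. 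There is no real obstacle here; the whole argument is a direct readout from the ring structure of $\rH^\ast(G_\R, \Z/2)$ and the functoriality of the cup product under restriction and coefficient reduction.
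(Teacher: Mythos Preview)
Your proof is correct and follows exactly the approach the paper intends: the corollary is stated without proof as an immediate consequence of the ring structure $\rH^*(G_\R,\Z/2)\cong\Z/2[\alpha]$ recalled in \ref{R_cup_prod}, together with the identification $\delta_2=\{b\}\cup\{a\}$ from Proposition~\ref{delta2cocycle} and its mod~$2$ analogue. Your argument for the second assertion via restriction along $G_\R\hookrightarrow G_\Q$ and compatibility of cup products with coefficient reduction $\Zhat(2)\to\Z/2$ is the natural one and is what the paper leaves implicit.
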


\subsection{The mod $2$ cup product for $G_{\Q}$}\label{cup_prod_GQ_Z/2}
 There is a finite algorithm for computing the cup product of two Kummer classes  
$$
\Q^* \otimes \Q^* \rightarrow \rH^1(G_{\Q}, \Z/2\Z(1)) \otimes \rH^1(G_{\Q}, \Z/2\Z(1)) \rightarrow \rH^2(G_{\Q}, \Z/2\Z(2)).
$$ 
The phrase ``computing an element of $\rH^2(G_{\Q}, \Z/2\Z(2))$" means the following. By the local-global principle for the Brauer group, the theorem of Hasse, Brauer, and Noether, see \cite[8.1.17 Thm p. 436]{coh_num_fields}, we have an isomorphism 
\[
\rH^2(G_{\Q}, \Z/2\Z(2)) = \Br(\Q)_2 \otimes \mu_2 = \ker\big(\bigoplus_v \Br(\Q_v)_2 \xrightarrow{\sum_v \inv_v} \Q/\Z \big) \otimes \mu_2
\]
where $A_2$ is the $2$-torsion of an abelian group $A$ and $\inv_v: \Br(k_v) \inj \Q/\Z$ is the local invariant map for the Brauer group of the local field $k_v$, see \cite[VI]{CF}, and $v$ ranges over all places of $\Q$. The class in $\rH^2(G_{\Q}, \Z/2\Z(2))$ is therefore completely determined by its restrictions to local cohomology groups with the freedom to ignore one place by reciprocity. We will benefit from this freedom by ignoring the prime $2$ for which we did not describe the mod $2$ cup product computation above.

Given $b$ and $a$ in $\Q^*$, we give a finite algorithm for computing $\operatorname{inv}_{\nu} (\{b\} \cup \{a\})$ for every $\nu \neq 2$. By \eqref{cup_prod_table_Q_p}, for an odd prime $p$ with $p \nmid ab$ we have $\{b\} \cup \{a\} = 0$. It therefore remains to evaluate 
\[
\inv_v (\{b\} \cup \{a\})
\]
for $v = \R$ and the finitely many odd primes $v=p$ with $p \mid ab$.
This is accomplished in finitely many steps by \ref{cup_prod_table_Q_p} and \ref{R_cup_prod}.

In fact, given any field extension $\Q \subset E$ and any cocycle in $C^2(\Gal(E/\Q), \Z/2)$ (for instance the ones given in Proposition \ref{8/10_delta_3_cocycle_form}), there is a finite algorithm for computing the associated element of $H^2(G_{\Q}, \Z/2)$.

\subsection{The $2$-nilpotent section conjecture for number fields is false}\label{2nilseccon} We describe several families of points of $\Jac(\pmk)(k)$ such that $\delta_2$ vanishes. 

\begin{example} \label{ex:familiesofkerdelta2}
(1) The map $\delta_2$ vanishes on the $k$ points and tangential points of the curve $\pmk$ by design. Therefore, the $k$ points of $$\Jac(\pmk) = \G_{m,k} \times \G_{m,k}$$ of the form $(x, 1-x)$ or $(-x,x)$ satisfy $\delta_2 = 0$ by \eqref{aj(x)_is_(x,1-x)} and Lemma~\ref{AbelJacobi(tgtpnt)}. 

From a more computational point of view, the vanishing of $\delta_2$ on $(-x,x)$ follows from the the calculation in Lemma~\ref{Dbchoose2} identifying the cochain whose boundary is $\{-x\} \cup \{x\}$ (use Lemma~\ref{chi-1/2_is_-1_kinC} and~\ref{addingKummercocycles}, to identify $\{-x\}$ and $\{x\} + \frac{\chi-1}{2}$). I do not presently see a specific cochain in $C^1(G_k, \Zhat(2))$ whose boundary is $\{x\} \cup \{1-x\}$, although I would not be surprised if such a cochain could be written down explicitly. 

(2) Since $\delta_2(b,a) = \{b\} \cup \{a\}$ by Proposition~\ref{delta2cocycle}, the map $\delta_2$ is bilinear in both coordinates of  
$$
k^* \oplus k^* = (\G_m \times \G_m)(k) = \Jac(\pmk)(k)
$$ 
Therefore, for any $(b,a)$ in $k^* \oplus k^*$, such that $\delta_2 (b,a)=0$, the points of the form $(b^m,a^n)$, for integers $m$ and $n$, satisfy $\delta_2(b^m,a^n) = 0$ as well. Likewise for any $(b,a)$, $(b,c)$ such that $\delta_2 (b,a)= \delta_2 (b,c)=0$, the point of the Jacobian $(b,ac)$ also satisfies $\delta_2(b,ac) = 0$. As was pointed out by Jordan Ellenberg, this produces many families of $k$ points of $\Jac(\pmk)$ which are unobstructed by $\delta_2$. A special case of this is mentioned in Proposition~\ref{some_pts_in_Kerdelta_2} below. 
\end{example}

\begin{proposition} \label{some_pts_in_Kerdelta_2}
Let $X=\pmk$. For any $x$ in $k^*$, the map $\delta_2$ vanishes on 
\[
(x, (1-x)^m), \quad ((-x)^m,x), \quad ((1-x)(-x),x), 
\]
\[
(x^{n_1}(1-((1-x)^{n_2}(-x)^{n_3})), (1-x)^{n_2}(-x)^{n_3}).
\]
\end{proposition}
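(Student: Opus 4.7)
The plan is to reduce everything to bilinearity of $\delta_2$ together with two basic vanishings, exactly as foreshadowed in Example~\ref{ex:familiesofkerdelta2}(2). By Proposition~\ref{delta2cocycle}, $\delta_2(b,a) = \{b\} \cup \{a\}$, and since the Kummer map $k^\ast \to \rH^1(G_k, \Zhat(1))$ is a homomorphism (Remark~\ref{Massey_Thm_Remark}(i)), $\delta_2$ is bilinear in $b$ and $a$ on $k^\ast \oplus k^\ast = \Jac(X)(k)$. Moreover the cup product on degree one classes is graded commutative, so $\{u\}\cup\{v\} = -\{v\}\cup\{u\}$ in $\rH^2(G_k, \Zhat(2))$.

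The two inputs I will invoke are (i) the Steinberg relation $\{x\}\cup\{1-x\} = 0$ for $x \in k^\ast \setminus \{1\}$, which follows from the fact that $(x, 1-x)$ is the Abel-Jacobi image of $x \in X(k)$ via the commutative square~\eqref{aj(x)_is_(x,1-x)}; and (ii) $\{-x\}\cup\{x\} = 0$ for $x \in k^\ast$, which is immediate from Example~\ref{Dbchoose2}, where $-{x \choose 2}$ is exhibited as a cochain with boundary $\{-x\}\cup\{x\}$ (alternatively it follows from Lemma~\ref{AbelJacobi(tgtpnt)} applied to $\overrightarrow{\infty w}$ with $w = -x^{-1}$, whose image under $\alpha$ is $(-x, x)$).

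With these ingredients in place, each of the four families collapses to a one-line cohomological expansion. For $(x, (1-x)^m)$ one has $\delta_2 = m\{x\}\cup\{1-x\} = 0$ by (i); for $((-x)^m, x)$ one gets $m\{-x\}\cup\{x\} = 0$ by (ii); for $((1-x)(-x), x)$, bilinearity gives $\{1-x\}\cup\{x\} + \{-x\}\cup\{x\}$, whose first summand equals $-\{x\}\cup\{1-x\}$ by graded commutativity and vanishes by (i) while the second vanishes by (ii). For the last family, put $t = (1-x)^{n_2}(-x)^{n_3}$ and expand to $\delta_2(x^{n_1}(1-t), t) = n_1 \{x\}\cup\{t\} + \{1-t\}\cup\{t\}$; the second summand vanishes by (i) applied to $t$ together with graded commutativity, and the first further expands as $n_1 n_2 \{x\}\cup\{1-x\} + n_1 n_3 \{x\}\cup\{-x\}$, both of whose terms vanish by (i), (ii), and graded commutativity.

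I expect no real obstacle: the entire argument is formal once one has bilinearity of $\delta_2$ together with the two identities (i) and (ii), both of which are already recorded earlier in the paper.
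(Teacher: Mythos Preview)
Your proof is correct and follows exactly the approach the paper intends: the paper's own proof simply says ``This follows immediately from the discussion in Example~\ref{ex:familiesofkerdelta2} above,'' and you have unpacked that discussion explicitly via bilinearity of $\delta_2$, graded commutativity of the cup product, and the two basic vanishings $\{x\}\cup\{1-x\}=0$ and $\{-x\}\cup\{x\}=0$.
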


\begin{proof}
This follows immediately from the discussion in Example~\ref{ex:familiesofkerdelta2} above.
\qed \end{proof}

Say the ``$n$-nilpotent section conjecture" for a smooth curve $X$ over a field $k$ holds if the natural map from $k$ points and tangential points to conjugacy classes of sections of \begin{equation}\label{absectionsfirst} 1 \rightarrow \pi_1(X_{\kbar})^{\ab} \rightarrow \pi_1(X)/ [ \pi_1(X_{\kbar})]_2 \rightarrow G_k \rightarrow 1 \end{equation} which arise from $k$ points of $\Jac X$, and lift to sections of \begin{equation} \label{nnilsections} 1 \rightarrow \pi_1(X_{\kbar})/[ \pi_1(X_{\kbar})]_{n+1} \rightarrow  \pi_1(X)/[\pi(X_{\kbar})]_{n+1} \rightarrow G_k \rightarrow 1 \end{equation} is a bijection. 
This is similar to the notion of ``minimalistic" birational section conjectures introduced by Florian Pop \cite{Pop_min_sc}. The notion given here has the disadvantage that it mentions the points of the Jacobian, and is therefore not entirely group theoretic. The reason for this is that finite nilpotent groups decompose as a product of $p$ groups, so the sections of (\ref{absectionsfirst}) and (\ref{nnilsections}) decompose similarly, allowing for sections which at different primes arise from different rational points of $X$. Restricting to a single prime will not give a ``minimalistic" section conjecture by results of Hoshi \cite{Hoshi_non-geom_pro-pGalois_sec}. 

Because the conjugacy classes of sections of a split short exact sequence of profinite groups $$1 \rightarrow Q \rightarrow Q \rtimes G \rightarrow G \rightarrow 1 $$ are in natural bijective correspondence with the elements of $\rH^1(G,Q)$\hidden{naturality comes from having chosen a splitting}, the $n$-nilpotent section conjecture for a curve with a rational point is equivalent to: {\em the natural map from $k$ points and tangential points to the kernel of $\delta_n$ is a bijection,} where the kernel of $\delta_n$ is considered as a subset of $\Jac X$. More precisely, a smooth, pointed curve $X$ gives rise to a commutative diagram
\[
\xymatrix{
 \rH^1(G_k,\pi/[\pi]_n) \ar[r]^{\delta_n} \ar[d]^{pr} & \rH^2(G_k, [\pi]_n/[\pi]_{n+1}) \\
 \rH^1(G_k,\pi/[\pi]_2) && X(k) \cup \bigcup_{x \in \overline{X}-X}(T_x \overline{X}(k)-\{0\}) \ar[ll]^{\alpha}\\
  \Jac(X)(k) \ar[u]^{\kappa}
}
\] where $\overline{X}$ denotes the smooth compactification of $X$, and $\pi$ denotes the fundamental group of $X_{\kbar}$. The $n$-nilpotent section conjecture is the claim $\alpha$ induces a bijection
\[
X(k) \cup \bigcup_{x \in \overline{X}-X}(T_x \overline{X}(k)-\{0\}) \rightarrow \kappa(\Jac(X)(k)) \cap pr(\ker(\delta_n)).
\]

Say the ``$n$-nilpotent section conjecture" holds for a field $k$, if for all smooth, hyperbolic curves over $k$, the $n$-nilpotent section conjecture holds. 

\begin{proposition} \label{false2nilscprop}
The $2$-nilpotent section conjecture fails for any subfield $k$ of $\C$ or $k$ the completion of a number field. In fact, the $2$-nilpotent section conjecture does not hold for $\pmk$. 
\end{proposition}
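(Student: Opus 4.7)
By Proposition~\ref{some_pts_in_Kerdelta_2}, for any $x \in k \setminus \{0,1\}$ the point $P_x := (x, (1-x)^2) \in \Jac(\pmk)(k) = k^* \times k^*$ satisfies $\delta_2 P_x = 0$. My plan is to show that the associated Kummer class $\kappa(P_x) \in \rH^1(G_k, \pi^{\ab})$ either witnesses a failure of surjectivity of the natural map or, in fields where this strategy fails at the $\rH^1$ level, one instead finds a failure of injectivity; either way the $2$-nilpotent section conjecture for $\pmk$ is obstructed.

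First I would combine Lemma~\ref{AbelJacobi(tgtpnt)} with~\eqref{aj(x)_is_(x,1-x)} to identify the image of $\alpha$ in $\Jac(\pmk)(k)$ with the union of the four strata $\{(t, 1-t) : t \in k \setminus \{0,1\}\}$, $\{(w, 1) : w \in k^*\}$, $\{(1, a) : a \in k^*\}$, and $\{(b, -b) : b \in k^*\}$. A direct case-by-case check then shows that $P_x$ fails to lie in this union as soon as $x \notin \{0, 1, 2\}$ and $x^2 - x + 1 \neq 0$; such $x$ exist in any infinite field.

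Next I would promote this non-membership at the level of $\Jac$ to non-membership in $\rH^1$. For $k$ satisfying $\bigcap_n (k^*)^n = \{1\}$ --- covering $\Q$, any number field, and all their non-archimedean completions --- the Kummer map is injective, so $\kappa$ on $\Jac(\pmk)(k)$ is injective and the previous step lifts at once, giving $\kappa(P_x)$ as the required element of $\kappa(\Jac(\pmk)(k)) \cap pr(\ker \delta_2)$ not hit by $\alpha$. For the remaining cases in the statement (archimedean $k = \R$, or algebraically closed subfields of $\C$) the map $\kappa$ has nontrivial kernel on $k^*$ and I would instead argue non-injectivity: if $k$ is algebraically closed then $\rH^1(G_k, \pi^{\ab}) = 0$ while $X(k) \cup \bigcup_t (T_t\overline{X}(k) \setminus \{0\})$ is infinite, and uniformly across all $k$ the distinct tangential points $\01$ and $\10$ of $\pmk$ are both sent by $\alpha$ to $(1,1) \in \Jac(\pmk)(k)$ by Lemma~\ref{AbelJacobi(tgtpnt)}, hence to the trivial element of $\rH^1(G_k, \pi^{\ab})$.

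The main obstacle is merely the case bookkeeping needed to cover every $k$ in the statement; no individual verification is technical once the explicit formula for $\alpha$ from Lemma~\ref{AbelJacobi(tgtpnt)} is in hand.
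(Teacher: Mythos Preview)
Your core approach---take $P_x = (x,(1-x)^2)$, invoke Proposition~\ref{some_pts_in_Kerdelta_2} for $\delta_2 P_x = 0$, and check $P_x$ avoids the four strata coming from \eqref{aj(x)_is_(x,1-x)} and Lemma~\ref{AbelJacobi(tgtpnt)}---is exactly the paper's argument. The paper simply picks $x\neq 1$ with $(1-x)^2 \neq -x$, asserts $(x,(1-x)^2)$ is not in the image of a rational or tangential point, and cites Proposition~\ref{some_pts_in_Kerdelta_2}; it does not distinguish the $\Jac$-level image from the $\rH^1$-level image.

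Where you go beyond the paper is in two respects. First, you correctly track the extra exclusion $x \neq 2$ (the case $(1-x)^2 = 1$), which the paper's condition $(1-x)^2 \neq -x$ omits. Second, and more substantively, you recognise that the conjecture is formulated at the level of $\kappa(\Jac(X)(k)) \cap pr(\ker\delta_2) \subset \rH^1(G_k,\pi^{\ab})$, so non-membership in $\alpha(\text{points and tangents}) \subset \Jac(k)$ only transfers to $\rH^1$ when $\kappa$ is injective. Your case split handles this, and your uniform non-injectivity observation---that $\01$ and $\10$ both map via $\alpha$ to $(1,1)$ and hence to the trivial class---is in fact a cleaner argument than either your surjectivity case or the paper's proof: it falsifies the bijectivity claim for \emph{every} $k$ at once, with no field-by-field analysis. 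The paper does not mention this; it buys you a one-line proof and removes the need for the Kummer injectivity discussion entirely.
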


\begin{proof}
Choose $x \not= 1$ in $k^*$ such that $(1-x)^2$ does not equal $-x$. Then the $k$ point of $\Jac (\pmk)$ given by $(x,(1-x)^2)$ is not the image of a $k$ point or tangential point of $\pmk$ by Lemmas~\ref{aj(x)_is_(x,1-x)} and~\ref{AbelJacobi(tgtpnt)}. By Proposition~\ref{some_pts_in_Kerdelta_2}, the section of (\ref{absectionsfirst}) determined by $(x,(1-x)^2)$ lifts to a section of (\ref{nnilsections}) for $n=2$.
\qed \end{proof}

While the failure of a  ``minimalistic section conjecture"  is not surprising at all, some do hold.
Moreover, the ``minimalistic section conjectures" in \cite{Pop_min_sc} \cite{delta2real} do not mention the points of the Jacobian and so control the rational points of $X$ group theoretically.

\subsection{The obstruction $\delta_2$ over $\Q$}\label{delta2Qalg}

By \cite{TateK2} Theorem 3.1, the cup product \eqref{H1H2cup} composed with the Kummer map~\eqref{eq:Kummermap} \begin{equation}\label{cup_prod_Kummer}
k^\ast \otimes k^\ast \rightarrow \rH^2(G_k,\Zhat(2))
\end{equation} factors through
the Milnor $K_2$-group of $k$ $$\rK_2(k) =  k^* \otimes_{\Z} k^* / \langle x \otimes (1-x): x \in k^\ast \rangle$$ mapping to $\rH^2(G_k,\Zhat(2))$ by the Galois symbol $h_k: \rK_2(k) \to \rH^2(G_k,\Zhat(2))$.

\begin{proposition} \label{Kerdelta_2_isKerK2}
Let $k$ be a finite extension of $\Q$, and $X = \pmk$. 
For any $(b,a) \in \Jac(X)(k) = k^\ast \times k^\ast$ we have $\delta_2(b,a) = 0$ if and only if $b \otimes a = 0 $ in $\rK_2(k)$.
\end{proposition}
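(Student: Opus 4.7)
The plan is to combine three inputs already on the table: the explicit cocycle formula for $\delta_{2}$, Tate's factorization of the cup product through $\rK_{2}$, and injectivity of the Galois symbol for number fields.

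First, by Proposition~\ref{delta2cocycle} we have $\delta_{2}(b,a)=\{b\}\cup\{a\}$ in $\rH^{2}(G_{k},\Zhat(2))$. By the cited result \cite{TateK2}~Thm.~3.1, the pairing $k^{\ast}\otimes k^{\ast}\to\rH^{2}(G_{k},\Zhat(2))$ sending $b\otimes a$ to $\{b\}\cup\{a\}$ vanishes on the Steinberg relations and hence factors as $h_{k}\circ\pi$, where $\pi:k^{\ast}\otimes k^{\ast}\twoheadrightarrow\rK_{2}(k)$ is the defining projection and $h_{k}:\rK_{2}(k)\to\rH^{2}(G_{k},\Zhat(2))$ is the Galois symbol. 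This identifies $\delta_{2}(b,a)=h_{k}(b\otimes a)$ and settles the ``if'' direction at once.

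For the ``only if'' direction I would argue that the Galois symbol $h_{k}$ is itself injective when $k$ is a number field. Step (a): invoke the weight-$2$ Merkurjev-Suslin theorem (in this setting due originally to Tate) to conclude that $h_{k}^{(n)}:\rK_{2}(k)/n\xrightarrow{\sim}\rH^{2}(G_{k},\mu_{n}^{\otimes 2})$ is an isomorphism for every $n$; passing to the inverse limit identifies $\ker h_{k}$ with the maximal divisible subgroup $\bigcap_{n}n\rK_{2}(k)$. Step (b): combine Garland's finiteness theorem $\rK_{2}(\mathcal{O}_{k})<\infty$ with the tame-symbol exact sequence
$$
0\to\rK_{2}(\mathcal{O}_{k})\to\rK_{2}(k)\to\bigoplus_{v\text{ finite}}k(v)^{\ast}\to 0
$$
to conclude that $\rK_{2}(k)$ has trivial maximal divisible subgroup: the quotient is reduced because it is a direct sum of finite cyclic groups (any element has finite support, hence lies in a finite subgroup with no divisible elements), and a separate $\ell$-primary argument rules out divisible elements landing in the finite kernel $\rK_{2}(\mathcal{O}_{k})$. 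Hence $\ker h_{k}=0$, and $\delta_{2}(b,a)=0$ forces $b\otimes a=0$ in $\rK_{2}(k)$.

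The main obstacle is Step (b): showing that $\rK_{2}(k)$ of a number field has no nontrivial divisible element, equivalently the triviality of the wild kernel in weight $2$. Over a general field the analogous injectivity fails, so one genuinely uses global arithmetic here. For $k=\Q$ this is transparent from Tate's explicit computation $\rK_{2}(\Q)\cong\{\pm 1\}\oplus\bigoplus_{p\text{ odd}}\bF_{p}^{\ast}$, which is also what powers the finite algorithm of~\ref{delta2Qalg}; for a general number field one cites the corresponding theorem of Tate on $\rK_{2}$ of global fields.
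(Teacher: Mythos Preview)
Your proposal is correct and, at the level of structure, matches the paper's proof: both reduce the statement to the injectivity of the Galois symbol $h_{k}:\rK_{2}(k)\to\rH^{2}(G_{k},\Zhat(2))$ for a number field, after identifying $\delta_{2}(b,a)=\{b\}\cup\{a\}=h_{k}(b\otimes a)$ via Proposition~\ref{delta2cocycle} and \cite[Thm~3.1]{TateK2}.

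The difference is one of packaging. The paper's argument is a single citation: \cite[Thm~5.4]{TateK2} asserts that $h_{k}$ is an isomorphism onto the torsion subgroup of $\rH^{2}(G_{k},\Zhat(2))$, which gives injectivity immediately. You instead unpack this into two steps --- the mod-$n$ bijectivity of the Galois symbol (Merkurjev--Suslin/Tate), and the vanishing of the maximal divisible subgroup of $\rK_{2}(k)$ via Garland's finiteness and the localization sequence. That second step is precisely the substance of Tate's theorem, and your sketch correctly flags it as the nontrivial point; in the end you also cite Tate for the general number field case. So your route is not really an alternative proof but an outline of what goes into \cite[Thm~5.4]{TateK2}. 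For the purposes of this proposition the direct citation is cleaner; your expanded version would be appropriate if the goal were to explain \emph{why} the Galois symbol is injective rather than merely to use it.
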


\begin{proof}
Since $\delta_2(b,a) = b \cup a$ by Proposition~\ref{delta2cocycle}, we have that $\delta_2$ factors through $h_k$. By \cite{TateK2} Theorem 5.4, $h_k$ is an isomorphism onto the torsion subgroup of $ \rH^2(G_k,\Zhat(2))$. 
\qed
\end{proof}

Tate's computation of $\rK_2 (\Q)$ thus gives an algorithm for computing $\delta_2$ for $\pmQ$ on any rational point $(b,a)$ of $\Jac (\pmQ)(\Q)= \Q^\ast \times  \Q^\ast$: by \cite[Thm 11.6]{Milnor_Kthy}, there is an isomorphism 
$$ 
\rK_2(\Q) \cong \mu_2 \oplus_{p \textrm{ odd prime}} \bF_p^\ast 
$$ 
given by the tame symbols: for odd $p$ (with $p$-adic valuation $v_p$)
$$
\rK_2(\Q) \rightarrow \bF_p^\ast
$$ 
\[
x \otimes y \mapsto (x,y)_p = (-1)^{v_p(x) v_p(y)} {x^{v_p(y)}}{y^{-v_p(x)}}  \in  \bF_p^\ast,
\]
and a map at the prime $2$
$$
\rK_2(\Q) \rightarrow \mu_2
$$ 
\[
x \otimes y \mapsto (x,y)_2 = (-1)^{iI+jK+kJ} 
\]
where $x = (-1)^i 2^j 5^k u$ and $y = (-1)^I 2^J 5^K u'$ with $k,K  = 0$ or $1$, and $u,u'$ quotients of integers congruent to $1$ mod $8$. 

By Proposition~\ref{Kerdelta_2_isKerK2}, we have $\delta_2(b,a) = 0$ if and only if $(b,a)_p=0$ for $p=2$ and for $p$ equal to all odd primes dividing $a$ or $b$. 

\begin{example}\label{(p,-p)=0inK2Qnew} As an example of this algorithm, consider $\delta_2(p,-p)$ for $p$ an odd prime: for $q $ different from $p$ and $2$, we have $(p,-p)_q=1$ because $v_q(p)$ and $v_q(-p)$ vanish. At the prime $p$, we have $(p,-p)_p = (-1)^1 p/ (-p) = 1$. For the prime $2$, express $p$ in the form $p = (-1)^i2^j5^ku$ with $k = 0,1$ and $u$ a quotient of integers congruent to $1$ mod $8$. Then $-p =(-1)^{i+1}2^j5^ku$, and $$(p,-p)_2 = (-1)^{i(i+1) + 2 jk} = 1.$$ Thus $\delta_2(p,-p)=0$, as also followed from Lemma \ref{AbelJacobi(tgtpnt)}, as $(p,-p)$ is the image of a tangential point of $\pmQ$, or can be deduced from the Steinberg relation. \end{example}

\begin{remark}\label{delta2K2sec} The computation of $\delta_2$ for $\pmk$ gives a geometric proof that (\ref{cup_prod_Kummer}) factors through $\rK_2(k)$, as observed by Jordan Ellenberg. Namely, by construction $\delta_2$ vanishes on the image of $$\pmk(k) \rightarrow \Jac (\pmk) (k) = k^* \times k^*.$$ By \eqref{aj(x)_is_(x,1-x)}, this image is $(x, 1-x)$, and by Proposition~\ref{delta2cocycle}, the obstruction $\delta_2$ is given by $\delta_2(b,a) = b \cup a$. Thus, the map (\ref{cup_prod_Kummer}) vanishes on $x \otimes (1-x)$, and therefore factors through $\rK_2(k)$.\end{remark}

\section{Evaluating quotients of $\delta_3$}\label{delta3sec}

We keep the notation $k$, $X = \pmk$, and $\pi=\pi_1(\pmkbar, \01)$ from \ref{sectiondelta_2}. In particular, we have a chosen isomorphism $\pi= \langle x,y \rangle^{\wedge}$ as above. To evaluate $\delta_3$ on points in $ \Ker \delta_2$ requires Galois cohomology computations with coefficients in $[\pi]_3/[\pi]_4 \cong \Zhat(3) \oplus \Zhat(3)$. As computing in $\rH^2(G_k, \Zhat(3))$ seems difficult, we will evaluate a quotient of $\delta_3$ which can be computed in $\rH^2(G_k, \Z/2(3)) = \rH^2(G_k, \Z/2)$. This quotient is denoted $\delta_3^{\mdl 2}$ and can be described as ``the reduction of $\delta_3$ mod $2$" as well as ``the $3$-nilpotent piece of $\delta_3^2$," where $\delta_3^2$ is the obstruction coming from the lower exponent $2$ central series as in \ref{Ellenbergob_subsection}.

\subsection{Definition of $\delta_3^{\mdl 2}$}\label{sec_defndelta_3mod2} Composing the obstruction $$\delta_3: \rH^1(G_k, \pi/[\pi]_3) \rightarrow \rH^2(G_k,  [\pi]_3/[\pi]_4)$$ with the map on $\rH^2$ induced from the quotient $$\xymatrix{[\pi]_3/[\pi]_4 \cong \Zhat(3)[[x,y],x] \oplus \Zhat(3) [[x,y],y] \ar@{->>}[d] \\ [\pi]_3/[\pi]_4 ([\pi]_3)^2 \cong \Z/2\Z[[x,y],x] \oplus \Z/2\Z [[x,y],y]}$$ gives a map $\rH^1(G_k, \pi/[\pi]_3) \rightarrow \rH^2(G_k,  [\pi]_3/[\pi]_4 ([\pi]_3)^2)$ which factors through $$\rH^1(G_k, \pi/[\pi]_3) \rightarrow \rH^1(G_k, \pi/[\pi]_3^2)$$ (as follows from Proposition \ref{8/10_delta_3_cocycle_form}) where $[\pi]_n^2$ denotes the $n^{th}$ subgroup of the lower exponent $2$ central series (cf. \ref{Ellenbergob_subsection}). The resulting map \begin{equation*}\label{delta_3mod2_bdry_mp}\delta_3^{\mdl 2}:\rH^1(G_k, \pi/[\pi]_3^2) \rightarrow \rH^1(G_k,  [\pi]_3/[\pi]_4 ([\pi]_3)^2)\end{equation*} is defined as $\delta_3^{\mdl 2}$. The basis $\{ [[x,y],x], [[x,y],y] \}$ decomposes $\delta_3^{\mdl 2}$ into two obstructions $$\delta^2_{3,[[x,y],x]},\delta^2_{3,[[x,y],y]}: H^1(G_k, \pi/[\pi]^2_3) \rightarrow H^2(G_k, \Z/2)$$ which are compatible with the previously defined $\delta_{3,[[x,y],x]}$, $\delta_{3,[[x,y],y]}$ in the obvious manner. In words, $\delta_3^{\mdl 2}$ is  $\delta_3$ reduced mod $2$.

The obstruction $\delta_3^{\mdl 2}$ can also be constructed from a central extension extension of groups with an action of $G_k$. To see this, we recall certain well-known results on the lower (exponent $p$) central series of free groups: for a free group $F$, the successive quotient $[F]_n/[F]_{n+1}$ of the lower central series is isomorphic to the homogeneous degree $n$ component of the free Lie algebra on the same generators. The Lie basis theorem gives bases for $[F]_n/[F]_{n+1}$ explicitly via bases for the free Lie algebra \cite[Thm 5.8]{Magnus_Karrass_Solitar}. For the free group on $2$ generators $x$ and $y$, $$\{x,y \}, \{[x,y] \}, \{[[x,y],x],[[x,y],y] \}, \{[[[x,y],x],x],[[[x,y],y],y],[[[x,y],y],x]\}  $$ are bases for $n=1,2,3,4$. Results of \cite{Magnus_Karrass_Solitar} and \cite{Lazard} can be used to show that if $\beta_i$ is a basis for $[F]_i/[F]_{i+1}$ for $i=1,\ldots, n$ and $\beta_i^{p^{n-i}}$ denotes the set whose elements are the elements of $\beta_i$ raised to the $p^{n-i}$, then $$\beta_1^{p^{n-1}} \cup \beta_2^{p^{n-2}} \cup \beta_3^{p^{n-3}} \cup \ldots \cup \beta_n$$ is a basis for $[F]^p_n/[F]^p_{n+1}$, where $[F]_n^p$ denotes the $n^{th}$ subgroup of the lower exponent $p$ central series. Thus, as a $G_k$-module we have 
\begin{equation*}
[\pi]_3^p/[\pi]_4^p =  \Z/p\Z(3) \cdot [[x,y],x] \oplus  \Z/p\Z(3) \cdot [[x,y],y]  \qquad \qquad
\end{equation*}
\[
\oplus \Z/p\Z(2) \cdot [x,y]^2 \oplus \Z/p\Z(1) \cdot x^4 \oplus \Z/p\Z(1) \cdot y^4.
\] It follows that \begin{equation}\label{delta3mod2CE} 1 \rightarrow [\pi]_3/[\pi]_4 ([\pi]_3)^p \rightarrow \pi/[\pi]_4^p([\pi]_2^p)^p\rightarrow \pi/[\pi]_3^p \rightarrow 1 \end{equation} is an exact sequence, and $\delta_3^{\mdl 2}$ is also the boundary map in $G_k$ cohomology of \eqref{delta3mod2CE} for $p=2$. 

Since we view Ellenberg's obstructions as constraints for points on the Jacobian, we will define a \textit{lift} of a point of $\Jac (X)(k)$ to $\rH^1(G_k, \pi/[\pi]_3^2)$ and then define \[
\delta_3^{\mdl 2} \ :  \ \Ker \big(\delta_2^2 :  \Jac (\proj^1_{k}-\{0,1,\infty\})(k) \to \rH^2(G_k,[\pi]_2^2/[\pi]_3^2)\big) \to \{0, 1\}
\]which assigns to $(b,a)$ in  $k^* \times k^*$ the element $0$ if there exists a lift of $(b,a)$ such that $\delta_3^{\mdl 2} = 0$, and assigns to $(b,a)$ the element $1$ if there does not exist such a lift.  If $\delta_3^{\mdl 2} (b,a) \neq 0$, it follows that $(b,a)$ is not the image of a $k$ point or tangential point under the Abel-Jacobi map.

For $(b,a)$ in $k^* \times k^* = \Jac (\proj^1_{k}-\{0,1,\infty\})(k)$, let $(b,a)$ also denote the associated element of $\rH^1(G_k, \pi^{ab})$. For a characteristic closed subgroup $N < \pi$, a \textit{lift} of $(b,a)$ to $\rH^1(G_k,\pi/N)$ is an element whose image under
$$ 
 \rH^1(G_k, \pi/N) \rightarrow \rH^1(G_k, (\pi/N)^\ab)
 $$ 
equals the image of $(b,a)$ under 
$$
\Jac(X)(k) \to \rH^1(G_k, \pi^{\ab}) \rightarrow \rH^1(G_k, (\pi/N)^\ab).
$$

\label{liftstopi/[pi]^2_3}For example, the lifts of $(b,a)$ to $\rH^1(G_k, \pi/[\pi]_3^2)$ can be described as follows.
The fixed embeddings $\C \supset \overline{\Q} \subseteq \kbar$ and resulting identification $\pi = \langle x,y\rangle^\wedge$ give canonical identifications 
\[
(\pi/[\pi]_3^2)^\ab = \Z/4(1) \cdot x \times \Z/4(1) \cdot y  
\]
\[
 \Ker(\pi/[\pi]_3^2 \rightarrow (\pi/[\pi]_3^2)^\ab) = \Z/2(2)\cdot [x,y].
 \]
Choose fourth roots of $b$ and $a$. These choices give cocycles via the Kummer map $$b,a: G_{k} \rightarrow \Z/4(1),$$ and $g \mapsto y^{a(g)}x^{b(g)}$ represents $(b,a)$ in $\rH^1(G_k, (\pi/[\pi]_3^2)^\ab)$. The obstruction to lifting $(b,a)$ to $\rH^1(G_{k}, \pi/[\pi]_3^2)$ is $\delta_2^2(b,a) = b \cup a \in \rH^1(G_k, \Z/2(2))$.

For $(b,a)$ such that $\delta_2^{2} (b,a) = 0$, the lifts of $(b,a)$ are in bijection with the set of cochains $c \in C^1(G_k, \Z/2)$ such that $dc= -b \cup a$ (note the minus sign) up to coboundary by $$c \leftrightarrow (b,a)_c,$$ where $$(b,a)_c(g) = y^{a(g)}x^{b(g)}[x,y]^{c(g)}.$$ The set of these lifts is a $\rH^1(G_k,\Z/2(2))$ torsor. We could have equivalently considered the set of cochains $c$ such that $dc= -b \cup a$, as the $G_k$ action on $\Z/2(2)$ is trivial c.f. Corollary \ref{lifts(b,a)_c_of(b,a)}.

\begin{example}  \label{ex:b(1-b)to4}
(1)  Let $(b,a) \in \Jac(X)(k)$ be the image of a rational point or a rational tangential point of $X = \pmk$. For $x,y \in k^\ast$ then $(bx^4,ay^4)$ is unobstructed by $\delta^{\mdl 2}_2$ and $\delta_3^{\mdl 2}$ since $(b,a)$ is unobstructed and defines the same class in $\rH^1(G_k,\pi^\ab \otimes \Z/4\Z)$.

(2) Similarly, the mod $m$ obstructions $\delta_n^{\mdl m}$ coming from the lower $m$-central series of $\pi$ are $m$-adically continuous in the sense that the obstruction map $\delta_{n+1}^{\mdl m}$ is constant on cosets by $(k^\ast)^{m^n}$.

(3) $(b,(1-b)^{4})$ determines the same element of $\rH^1(G_K, \pi/([\pi]_2 \pi^4))$ as $(b,1)$ which is the image of a $k$ tangential point at $0$ (Lemma \ref{AbelJacobi(tgtpnt)}). Therefore $$\delta_3^{\mdl 2}(b,(1-b)^{4}) = 0.$$ This is an example of a point of the Jacobian unobstructed both by $\delta_2$ (Proposition \ref{some_pts_in_Kerdelta_2}) and $\delta_3^{\mdl 2}$.
\end{example}

We can also compute $\delta_3^{\mdl 2}(b,(1-b)^{4})$ directly using Proposition \ref{delta_3_p1minus3_cocycle}. We include the calculation. Let $[\pi]^m_{4}$ denote the subgroup of the lower exponent $m$ central series c.f. \ref{Ellenbergob_subsection}.

\begin{proposition}\label{(b,(1-b)^{m^2})lifts_H1(Gk,pi/pi^m_4)} The conjugacy class of the section of \begin{equation}\label{abm4sections} 1 \rightarrow \pi^{ab}/(\pi)^{m^3} \rightarrow \pi_1(\pmk, \01)/([\pi]_2 [\pi]^m_{4}) \rightarrow G_k \rightarrow 1\end{equation} determined by $(b,(1-b)^{m^2})$ lifts to a section of $$1 \rightarrow \pi/[\pi]^m_{4} \rightarrow \pi_1(\pmk, \01)/[\pi]^m_{4} \rightarrow G_k \rightarrow 1.$$\end{proposition}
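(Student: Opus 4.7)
The plan is to recast the lifting question in terms of the Ellenberg obstructions $\delta_n^m$ attached to the lower exponent-$m$ central series of $\pi$ (as set up in Section~\ref{Ellenbergob_subsection}), and then reduce the problem to the case of a tangential point by means of the $m$-adic continuity of Example~\ref{ex:b(1-b)to4}(2).

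First I would observe that the kernel of the bottom extension is $\pi/[\pi]_2[\pi]^m_4 = \pi^{\ab}/m^3\pi^{\ab}$, so the section determined by $(b,(1-b)^{m^2}) \in \Jac(X)(k)$ corresponds to its Kummer class $(\{b\}, m^2\{1-b\}) \in \rH^1(G_k,\pi^{\ab}/m^3\pi^{\ab})$. Lifting to a section of $\pi_1(\pmk,\01)/[\pi]^m_4 \to G_k$ is equivalent to the vanishing of the obstructions $\delta_2^m$ at $(b,(1-b)^{m^2})$ and $\delta_3^m$ at some compatible lift in $\rH^1(G_k,\pi/[\pi]^m_3)$, by exactly the argument giving the Ellenberg tower in Section~\ref{Ellenbergob_subsection}.

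Next, I would apply the $m$-adic continuity asserted in Example~\ref{ex:b(1-b)to4}(2). Because $(1-b)^{m^2} \in (k^\ast)^{m^2}$, the two Jacobian points $(b,(1-b)^{m^2})$ and $(b,1)$ lie in the same coset modulo $\{1\} \times (k^\ast)^{m^2}$. The obstruction $\delta_2^m$ is constant on cosets by $(k^\ast)^{m} \supset (k^\ast)^{m^2}$, and $\delta_3^m$ is constant on cosets by $(k^\ast)^{m^2}$, so both agree with their values at $(b,1)$. By Lemma~\ref{AbelJacobi(tgtpnt)} the point $(b,1)$ is the Abel-Jacobi image of the tangential point $\overrightarrow{0b}$, and the canonical splitting from the tangential base point (Section~\ref{Tangential_base_pts}) gives an actual section of $\pi_1(\pmk,\01) \to G_k$, which descends to a section of $\pi_1(\pmk,\01)/[\pi]^m_4 \to G_k$. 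Hence $\delta_2^m(b,1) = 0$ and $\delta_3^m(b,1) = 0$, and combining with the continuity step produces the desired lift.

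The main obstacle is that the $m$-adic continuity in Example~\ref{ex:b(1-b)to4}(2) is asserted without a proof in the excerpt, so a self-contained argument needs to establish it. The cleanest route is to extract continuity from the explicit cocycle formulae of Proposition~\ref{8/10_delta_3_cocycle_form}, noting that each profinite binomial coefficient ${b \choose 2}$ and each cup-product term only contributes mod-$m^{n-1}$ information at the stage computing $\delta_n^m$ (since the image of $[\pi]^m_n$ in $\pi^{\ab}$ is exactly $m^{n-1}\pi^{\ab}$, as computed in \ref{sec_defndelta_3mod2}). Alternatively, one can avoid continuity entirely: start with the tangential-point section $\sigma_{\overrightarrow{0b}}$ of $\pi_1(\pmk,\01)/[\pi]^m_4$, which has abelianization $(\{b\},0)$ in $\pi^{\ab}/m^3$, and modify by a cocycle valued in the $y$-subgroup whose abelianization is $(0,m^2\{1-b\})$. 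Making this modification into a genuine $1$-cocycle in the non-abelian $\pi/[\pi]^m_4$ requires controlling the twisted action \eqref{G_k_action_F_2}, but the required correction lies in $[\pi]_3 \subset [\pi]^m_3$ and so can be absorbed by rechoosing the lift at the $[\pi]_2$-level, which is precisely what the obstruction-vanishing argument accomplishes abstractly.
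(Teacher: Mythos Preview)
Your approach is correct and is in fact the argument the paper itself sketches in Example~\ref{ex:b(1-b)to4}(3): reduce to the tangential point $(b,1)$ by $m$-adic continuity. However, the paper's proof of the Proposition is deliberately a \emph{different}, direct computation, introduced with the phrase ``We can also compute \ldots\ directly.'' The paper writes down the explicit candidate cocycle $g \mapsto y^{a(g)}x^{b(g)}$ with $a = m^2\{1-b\}$ and $c=0$, observes that $a \equiv 0 \bmod m^2$ forces $b \cup a \equiv 0 \bmod m^2$ (so this already lands in $\pi/[\pi]_3^m$), and then plugs $a \equiv 0 \bmod m$, $c=0$ into the explicit cocycle formulae of Proposition~\ref{delta_3_p1minus3_cocycle} to check both components of $\delta_3$ vanish mod $m$; the only nontrivial step is the elementary verification that ${\chi(g)a(h)+1 \choose 2} \equiv 0 \bmod m$ when $a \equiv 0 \bmod m$.

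What each approach buys: your continuity argument is conceptually cleaner and makes the connection to the tangential point transparent, but---as you correctly flag---it rests on Example~\ref{ex:b(1-b)to4}(2), which is stated without proof. The paper's direct computation is self-contained and in effect \emph{is} the verification of continuity in this particular case, carried out by hand from the cocycle formulae rather than appealing to a general principle. Your proposed remedy (extract continuity from Proposition~\ref{8/10_delta_3_cocycle_form}, or equivalently~\ref{delta_3_p1minus3_cocycle}) is exactly right, and once you do that you will have essentially reproduced the paper's direct proof.
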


\begin{proof}
Let $(b,a)= (b,(1-b)^{m^2})$. Choose compatible systems of $n^{th}$ roots of $b$ and $1-b$, giving cocycles $b,1-b: G_k \rightarrow \Zhat(1)$, and let $a: G_k \rightarrow \Zhat(1)$ be $m^2(1-b)$. Conjugacy classes of sections of (\ref{abm4sections}) are in bijection with $\rH^1(G_k, \pi^{ab}/(\pi)^{m^3})$ using $\01$ as the marked splitting. We claim that \begin{equation}\label{H1forba}g \mapsto y^{a(g)}x^{b(g)}\end{equation} defines a cocycle $G_k \rightarrow \pi/[\pi]_4^m,$  showing the proposition. Note that the reduction mod $m^2$ of $a$ is $0$, showing that $b \cup a = 0$ mod $m^2$, whence \eqref{H1forba} is a cocycle valued in $\pi/[\pi]_3^m$. By Proposition \ref{delta_3_p1minus3_cocycle} with $a=0$ mod $m$ and $c=0$, we have that $\delta_{3, [[x,y],x]}^m = 0$. An easy algebraic manipulation shows that ${\chi(g)a(h) + 1\choose 2}$ is $0$ mod $m^2$ for $m$ odd, and $0$ mod $m$ for any $m$, so the expression for $\delta_{3,[[x,y],y]}$ of Proposition \ref{delta_3_p1minus3_cocycle} with $a=c=0$ mod $m$ shows that $\delta_{3, [[x,y],y]}^m = 0$.
\qed \end{proof}

\subsection{Evaluating the $2$-nilpotent quotient of $\frak{f}$}\label{2nilquotf}
The cocycle $\frak{f}: G_k \rightarrow [\pi]_2$ in the description given in \eqref{G_k_action_F_2} of the Galois action on $\pi_1(\pmkbar, \01)$ records the monodromy of the standard path from $\01$ to $\10$. To evaluate $\delta_{[[x,y],y]}^2$, we will need to evaluate the $2$-nilpotent quotient $f : G_k \to \Zhat(2)[x,y]$ of $\frak{f}$ that was introduced in \eqref{fdef}, or more precisely its mod $2$ reduction.
Work of Anderson \cite{Anderson_hyperadelic_gamma}, Coleman \cite{Coleman_Gauss_sum}, Deligne, Ihara \cite[6.3 Thm~p.115]{Ihara_Braids_Gal_grps}, Kaneko, and Yukinari \cite{IKY} gives the equation 
\begin{equation}\label{f_is_(1-chi^2)/24}
f(\sigma) = \frac{1}{24}(\chi(\sigma)^2 - 1)
\end{equation}
where we recall that $\chi: G_k \rightarrow \Zhat^*$ denotes the cyclotomic character.  For the convenience of the reader, we introduce enough of the notation of \cite{Ihara_Braids_Gal_grps} to check (\ref{f_is_(1-chi^2)/24}) from the statement given in loc. cit. 6.3 Thm~p.115.

Let $\Zhat \langle \langle \xi, \eta \rangle \rangle$ denote the non-commutative power series algebra in two variables over $\Zhat$. The Magnus embedding of the free group on two generators in $\Zhat \langle \langle \xi, \eta \rangle \rangle$ gives rise to an injective  map 
$$
\mathcal{M} \ : \ \pi\cong \langle x,y\rangle^{\wedge} \inj \Zhat \langle \langle \xi, \eta \rangle \rangle
$$ 
defined by 
$$
\mathcal{M}(x) = 1 + \xi 
$$ 
$$
\mathcal{M}(y) = 1 + \eta
$$
By \cite{Magnus_Karrass_Solitar} Cor 5.7, $\mathcal{M}$ takes $[\pi]_n$ to elements of the form $1+ \sum_{m \geq n} u_m $, where $u_m$ is homogeneous of degree $m$. By \cite[\S 5.5]{Magnus_Karrass_Solitar} Lemma 5.4, for any $j\in \Zhat$, $$\mathcal{M}([x,y]^j) = 1 + j(\xi \eta - \eta \xi) + \sum_{m > 2} u_m $$ where $u_m$ is some homogeneous element of degree $m$ (depending on $j$). More generally, to lowest order in $\Zhat \langle \langle \xi, \eta \rangle \rangle$, $\mathcal{M}$ takes the commutator of the group $\pi$ to the Lie bracket of the associative algebra $\Zhat \langle \langle \xi, \eta \rangle \rangle$, in the manner made precise by loc. cit. \S 5.5 Lemma 5.4 (7).
Thus
\begin{equation}\label{frakfloworder}
\frak{f}(\sigma) = 1 + f(\sigma)(\xi \eta - \eta \xi)+ \mathcal{O}(3)
\end{equation} 
where $\mathcal{O}(3)$ is a sum of monomials of degree $\geq 3$.

As a $\Zhat$ module, $\Zhat \langle \langle \xi, \eta \rangle \rangle$ is the direct sum $$\Zhat \langle \langle \xi, \eta \rangle \rangle \cong \Zhat \oplus  \Zhat \langle \langle \xi, \eta \rangle \rangle \xi \oplus \Zhat \langle \langle \xi, \eta \rangle \rangle \eta.$$ Define $\psi: G_k \rightarrow \Zhat \langle \langle \xi, \eta \rangle \rangle$ to be the projection of $(\frak{f})^{-1}$ onto the direct summand $\Zhat \oplus  \Zhat \langle \langle \xi, \eta \rangle \rangle \xi$ i.e. $$(\frak{f}(\sigma))^{-1} = 1+ a_1 \xi + a_2 \eta $$ $$\psi(\sigma) = 1 + a_1 \xi$$ By (\ref{frakfloworder}), we have $$(\frak{f}(\sigma))^{-1} = 1 - f(\sigma)(\xi \eta - \eta \xi)+ \mathcal{O}(3),$$ so $f(\sigma)$ is the coefficient of $\eta \xi$ in $\psi(\sigma)$. As the only degree $2$ terms that $\psi(\sigma)$ can contain are $\Zhat$ linear combinations of $\eta \xi$ and $\xi^2$, the cocycle $f$ is determined by the degree $2$ terms of the projection of $\psi(\sigma)$ to the commutative power series ring. More explicitly, let $\psi^{ab}: G_k \rightarrow \Zhat[[ \xi, \eta]]$ denote the composition of $\psi$ with the quotient $$\Zhat \langle \langle \xi, \eta \rangle \rangle \rightarrow \Zhat [[ \xi, \eta]]$$ where $\Zhat [[ \xi, \eta]]$ denotes the commutative power series ring. Then $$\psi^{ab}(\sigma) = 1 + f(\sigma) \eta \xi + r$$ where $r$ is a sum a monomial of the form $b \xi^2$ with $b \in \Zhat$ and monomials of degree greater than one.

The formula  in \cite[6.3 Thm~p.115]{Ihara_Braids_Gal_grps} expresses $\psi^{ab}(\sigma)$ in terms of the Bernoulli numbers and the variables $X = \operatorname{log}(1+ \xi)$, and $Y = \operatorname{log}(1+ \eta)$. This formula gives $$\psi^{ab}(\sigma) = 1 - \frac{1}{2} b_2 (1 - \chi(\sigma)^2) \eta \xi + \mathcal{O}(3)$$ where $\mathcal{O}(3)$ is a sum of monomial terms in the variables $\eta$,$\xi$ of degree $\geq 3$, and $b_2 = \frac{1}{12}$. This implies (\ref{f_is_(1-chi^2)/24}).

\medskip

We denote the mod $2$ reduction of $f$ by
\[
\overline{f} \ : \ G_k \to \Zhat(2) \to \Z/2Z(2) = \Z/2\Z. 
\]

\begin{lemma} \label{frakf2nilmod2} 
The class represented by $\overline{f}$ in $\rH^1(G_k,\Z/2\Z)$ is the class of $2$ under the Kummer map.
\end{lemma}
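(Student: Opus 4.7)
The plan is to verify the desired identity directly on cocycle representatives using the known explicit formula for $f$ recalled from \eqref{f_is_(1-chi^2)/24}, namely
\[
f(\sigma) = \tfrac{1}{24}\bigl(\chi(\sigma)^2-1\bigr),
\]
and to match its reduction mod $2$ with the mod $2$ Kummer cocycle of $2$ via the inclusion $\Q(\sqrt{2})\subset\Q(\zeta_8)$.

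First I would check that the formula makes sense $2$-integrally. Write $\chi(\sigma)=1+2t$ with $t\in\Zhat$. Then $\chi(\sigma)^2-1=4t(t+1)$, and since $t(t+1)$ is always even, $8\mid \chi(\sigma)^2-1$. Since $\chi(\sigma)\equiv \pm 1 \pmod 3$, also $3\mid\chi(\sigma)^2-1$, hence $24\mid\chi(\sigma)^2-1$ in $\Zhat$, and $\tfrac{1}{24}(\chi(\sigma)^2-1)\in\Zhat$ is well-defined.

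Next, I would compute the mod $2$ reduction of this element. Using that the squares of the odd residues mod $16$ are $\{1,9\}$, one has $\chi(\sigma)^2-1\in\{0,8\}\pmod{16}$, the value being $0$ exactly when $\chi(\sigma)\equiv\pm1\pmod 8$. Combined with $\chi(\sigma)^2-1\equiv 0\pmod 3$ and the Chinese Remainder Theorem, this gives $\chi(\sigma)^2-1\in\{0,24\}\pmod{48}$, whence
\[
\overline{f}(\sigma)\;=\;\tfrac{1}{24}(\chi(\sigma)^2-1)\bmod 2\;=\;\begin{cases}0 & \text{if } \chi(\sigma)\equiv\pm 1 \pmod 8,\\ 1 & \text{if } \chi(\sigma)\equiv\pm 3 \pmod 8.\end{cases}
\]

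Finally, I would compute $\{2\}\pmod 2$ on the same cocycle level. The Kummer cocycle of $2$ modulo $2$ sends $\sigma$ to $\sigma(\sqrt{2})/\sqrt{2}\in\mu_2=\Z/2$, so $\{2\}(\sigma)=0$ iff $\sigma$ fixes $\sqrt{2}$. Using the chosen compatible roots of unity $(\zeta_n)$ of Remark \ref{Zhat1=Zhatchi} and $\sqrt{2}=\zeta_8+\zeta_8^{-1}$, for any $\sigma\in G_k$ we have $\sigma(\zeta_8)=\zeta_8^{\chi(\sigma)}$, so
\[
\sigma(\sqrt{2})=\zeta_8^{\chi(\sigma)}+\zeta_8^{-\chi(\sigma)},
\]
which equals $\sqrt{2}$ precisely when $\chi(\sigma)\equiv\pm 1\pmod 8$, and equals $-\sqrt{2}$ when $\chi(\sigma)\equiv\pm 3\pmod 8$ (since $\zeta_8^3=-\zeta_8^{-1}$). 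Thus $\{2\}(\sigma)\bmod 2=\overline{f}(\sigma)$ as functions $G_k\to\Z/2$, hence as cocycles, hence in $\rH^1(G_k,\Z/2)$.

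No step is especially difficult; the only mild subtlety is the bookkeeping ensuring that the identification $\Z/2(2)=\Z/2$ used for $\overline{f}$ and the identification $\mu_2=\Z/2$ used for the mod $2$ Kummer class are the ones induced by the fixed compatible system $(\zeta_n)$ of Remark \ref{Zhat1=Zhatchi}, so that the equality makes sense cocycle-wise and not merely up to sign.
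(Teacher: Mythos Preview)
Your proof is correct and follows essentially the same approach as the paper: both compute $\overline{f}(\sigma)$ in terms of $\chi(\sigma)\bmod 8$ from the explicit formula \eqref{f_is_(1-chi^2)/24}, and then identify the resulting character with the mod $2$ Kummer class of $2$ via $\sqrt{2}=\zeta_8+\zeta_8^{-1}$. Your version is somewhat more detailed in verifying the $2$-integrality and the mod $48$ bookkeeping, but the argument is the same.
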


\begin{proof} We may assume that $k=\Q$ by functoriality. The value of $\frac{1}{24}(1 - \chi(\sigma)^2)$ mod $2$ is determined by $1 - \chi(\sigma)^2$ mod $48$, which in turn is determined by $\chi(\sigma)$ mod $24$. A direct check shows that $\overline{f}(\sigma)=1$ when $\chi(\sigma)$ is $\pm 3$ mod $8$, and $\overline{f}(\sigma)=0$ otherwise. Thus $\overline{f}$ corresponds to the quadratic extension $k \subset k(\zeta_8 + \zeta_8^{-1}) = k(\sqrt{2})$ inside $k \subset k(\zeta_8)$.
\end{proof}

\subsection{Local $3$-nilpotent obstructions mod $2$ at $\R$}\label{delta_3mod2R} 

We compute $\delta_3^{\mdl 2}$ for $k=\R$. For a point $(b,a)$ of $\R^\ast \times \R^\ast \cong \Jac(\proj_{\R}^1-\{0,1,\infty\})(\R),$ we have the associated element of $\rH^1(G_\R, \pi^{ab})$. Recall the notation $(b,a)_c$ and characterization of the lifts of this element to $\rH^1(G_\R, \pi/[\pi]_3^2)$ of  \ref{liftstopi/[pi]^2_3}. Recall as well that $\{-1\}$ in $C^1(G_k, \Z/2)$ denotes the image of $-1$ under the Kummer map, and note that given $c$ in $C^1(G_{\R}, \Z/2(2))$ such that $Dc = -b \cup a$, we have that $c + \{-1\}$ is another cochain such that $D(c + \{-1\}) = -b \cup a$. Thus $(b,a)_c$ and $(b,a)_{c + \{-1\}}$ are two lifts of $(b,a)$ to $\rH^1(G_\R, \pi/[\pi]_3^2)$, and they are the only two as $\rH^1(G_\R, \Z/2(2)) = \Z/2$.

\begin{proposition} \label{Prop_delta_3_2R}
For any $(b,a)_c$ in $\rH^1(G_\R, \pi/[\pi]_3^2)$, either 
$$
\delta_3^{\mdl 2}(b,a)_c = 0
\quad \text{ or } \quad 
\delta_3^{\mdl 2} (b,a)_{c + \{-1\}} = 0.
$$
\end{proposition}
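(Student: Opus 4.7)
The plan is to reduce the formulas of Proposition~\ref{8/10_delta_3_cocycle_form} modulo $2$: applying $\{-1\}=\tfrac{\chi-1}{2}$ from~\eqref{chi-1/2_is_-1_kinC} and $\bar f=\{2\}$ from Lemma~\ref{frakf2nilmod2}, one obtains
\begin{align*}
\delta_{3,[[x,y],x]}^{\mdl 2}((b,a)_c) &= \{-b\}\cup c + {b \choose 2}\cup a,\\
\delta_{3,[[x,y],y]}^{\mdl 2}((b,a)_c) &= \{-a\}\cup(c+ab) + {a \choose 2}\cup b + \{2\}\cup a,
\end{align*}
as classes in $\rH^2(G_\R,\Z/2)$. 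Since the lifts of $(b,a)$ to $\rH^1(G_\R,\pi/[\pi]_3^2)$ form a torsor under $\rH^1(G_\R,\Z/2)=\Z/2$, the two cohomology classes $(b,a)_c$ and $(b,a)_{c+\{-1\}}$ exhaust all lifts, so it suffices to produce a single lift for which $\delta_3^{\mdl 2}$ vanishes.

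Existence of any lift forces $\delta_2^{\mdl 2}(b,a)=\{b\}\cup\{a\}=0$, which by the cup-product description~\ref{R_cup_prod} forces $b>0$ or $a>0$. I would split into these two (overlapping) cases and, in each, exhibit a concrete cocycle representative for which both components above collapse to zero. In the case $b>0$, choose $\sqrt[4]{b}\in\R_{>0}$: the Kummer cocycle $b\colon G_\R\to\Z/4(1)$ is then identically zero, ${b\choose 2}$ is the zero cochain, and $c=0$ is a valid lift since $Dc=-b\cup a=0$. The first component is then trivially zero, and the second collapses to the cohomology class $\{2\}\cup\{a\}$, which vanishes because $\{2\}=0$ in $\rH^1(G_\R,\Z/2)$ (as $2$ is positive). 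In the case $a>0$, dually choose $\sqrt[4]{a}\in\R_{>0}$ and $c=0$; then every summand of both formulas contains a cochain-level factor ($a$, $c$, or ${a\choose 2}$) that vanishes identically.

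The main care required is separating cocycle-level from cohomology-class identities: $\bar f=\{2\}$ and $\{2\}\cup\{a\}=0$ are only cohomological statements, which is adequate since $\delta_3^{\mdl 2}$ lives in $\rH^2$, whereas the vanishing of ${b\choose 2}$ in the case $b>0$ is a cochain-level statement, which is needed because ${b\choose 2}$ is not a cocycle a priori. Beyond this bookkeeping no deeper structural input seems necessary; the formulas of Proposition~\ref{8/10_delta_3_cocycle_form}, the table~\ref{R_cup_prod}, and the identification of $\bar f$ in Lemma~\ref{frakf2nilmod2} already supply everything used.
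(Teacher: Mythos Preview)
Your proof is correct. Both you and the paper reduce Proposition~\ref{8/10_delta_3_cocycle_form} modulo $2$, invoke $\bar f=\{2\}$ from Lemma~\ref{frakf2nilmod2} and the fact that $\{2\}=0$ in $\rH^1(G_\R,\Z/2)$, and use that $\delta_2^{\mdl 2}(b,a)=0$ forces at most one of $a,b$ to be negative.

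The execution differs slightly. The paper argues through the Massey product interpretation (Theorem~\ref{delta_3_Massey_prod_8_10}) and runs three cases $(a,b>0)$, $(a>0,b<0)$, $(a<0,b>0)$; in the mixed-sign cases it shows one component is independent of $c$ and then adjusts $c$ by $\{-1\}$ to kill the other, using that $\{-1\}\cup\{-1\}$ generates $\rH^2(G_\R,\Z/2)$. You instead pick the positive variable, choose its real positive fourth root so the corresponding Kummer cocycle vanishes identically at the cochain level, and take $c=0$; every summand in both components then dies either as a cochain or, for $\bar f\cup a$, as a cohomology class. This bypasses the adjustment argument entirely and never needs the Massey product language. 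Your care in distinguishing cochain-level vanishing (needed for the non-cocycle ${b\choose 2}$) from cohomological vanishing (sufficient for the cocycle $\bar f\cup a$) is exactly the point that makes the argument go through cleanly.
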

\begin{proof}
Since $\overline{f} = 2 \in \rH^1(G_{\R},\Z/2\Z)$ vanishes (Lemma~\ref{frakf2nilmod2}), it suffices by 
Theorem~\ref{delta_3_Massey_prod_8_10} to show that the triple Massey products 
\[
\delta_{3,[[x,y],x]}^{\mdl 2}(b,a) = \langle \{-b\}, b, a \rangle  \in \rH^2(G_{\R},\Z/2\Z)
\]
\[
\delta_{3,[[x,y],y]}^{\mdl 2}(b,a) = \langle \{-a\}, a, b \rangle - \overline{f} \cup b =   \langle \{-a\}, a, b \rangle \in \rH^2(G_{\R},\Z/2\Z)
\]
admit the value $0$ for a compatible choice of the defining systems as in Remark \ref{Massey_Thm_Remark} (ii).  
Changing $c$ by a $1$-cocycle $\epsilon \in C^1(G_{\R},\Z/2\Z)$ has the effect by Proposition~\ref{8/10_delta_3_cocycle_form} that 
\begin{equation*}
\delta_3^{\mdl 2}(b,a)_{c} - \delta_3^{\mdl 2}(b,a)_{c+\epsilon} = 
\{-b\} \cup \epsilon \cdot [[x,y],x]  + \{-a\} \cup \epsilon \cdot [[x,y],y].
\end{equation*}
Since $a \cup b = 0$ we conclude that at most one of $a,b$ is negative.

\textit{Case $a,b>0$}: we may choose trivial defining systems (i.e. the defining system for $\langle \{-b\}, b, a \rangle$ is $\{ {b \choose 2}, 0  \} = \{0,0\}$ and the defining system for $ \langle \{-a\}, a, b \rangle$ is  $\{{a \choose 2}, ab + 0 \} = \{0, 0 \}$) so the obstruction vanishes.

\textit{Case $a>0$ and $b < 0$}: regardless of the defining system, the Massey product 
$\langle \{-b\}, b, a \rangle$ always vanishes. The other Massey product can be adjusted if necessary by $\epsilon = -1$ since $\{-1\} \cup \{-1\}$ generates $\rH^2(G_{\R},\Z/2\Z)$.

\textit{Case $a<0$ and $b>0$}: regardless of the defining system, the Massey product 
$\langle \{-a\}, a, b \rangle$ always vanishes. The other Massey product can be adjusted if necessary by $\epsilon = -1$ since $\{-1\} \cup \{-1\}$ generates $\rH^2(G_{\R},\Z/2\Z)$.
\qed
\end{proof}

\subsection{Local $3$-nilpotent obstructions mod $2$ above odd primes}\label{evaluation_delta32p} 

Let $k$ be a number field embedded into $\C$. Let $p \in \Z$ be an odd prime, $k_v$ the completion of $k$ at a prime $v$ above $p$, and choose an embedding $\overline{\Q} \subset \overline{k_v}$, where $\overline{\Q}$ is the algebraic closure of $\Q$ in $\C$. 

We compute $\delta_3^{(\mdl 2,v)}$ for all $k_v$-points of the Jacobian of $\proj^1_{k_v} - \{0,1,\infty\}$ in the kernel of $\delta_2^{(\mdl 2,v)}$. Note that \ref{cup_prod_table_Q_p} and 
\[
\delta_2^{\mdl 2,v}(b,a) = a \cup b \in \rH^2(G_{k_v},\Z/2\Z(2))
\]
characterize this kernel, and recall the notation $(b,a)_c$ for lifts of $(b,a) \in \Ker \delta_2^{\mdl 2,v}$ to $\rH^1(G_{k_v}, \pi/[\pi]^2_3)$ given in \ref{liftstopi/[pi]^2_3}.

\begin{lemma}\label{rank<2} Let $(b,a)$ in $\Jac(\proj^1_{k_v} - \{0,1,\infty\})(k_v)$ be in the kernel of $\delta_2^{\mdl 2,v}$ and let $(b,a)_c$ be a lift of $(b,a)$ to $H^1(G_{k_v}, \pi/[\pi]_3^2)$. 
\begin{enumerate}\item \label{loc_obst_-b=0}  If $\{-b\} = 0$ in $\rH^1(G_{k_v}, \Z/2)$, then $\delta_{3,[[x,y],x]}^{(2,v)} (b,a)_{c} = {b \choose 2} \cup a$ is independent of $c$. 
\item \label{loc_obst_-a=0} If $\{-a\} = 0$ in $\rH^1(G_{k_v}, \Z/2)$, then $\delta_{3,[[x,y],y]}^{(2,v)} (b,a)_{c} = {a \choose 2} \cup b + \overline{f} \cup a$ is independent of $c$. 
\item \label{loc_obst_b=a} If $\{b\} = \{a\}$ in $\rH^1(G_{k_v}, \Z/2)$, then $$\delta_{3,[[x,y],x]}^{(2,v)} (b,a)_{c} + \delta_{3,[[x,y],y]}^{(2,v)} (b,a)_{c} = ({a \choose 2} + {b \choose 2} + \overline{f}) \cup a $$ is independent of $c$.
\end{enumerate} Otherwise $$\delta_3^{(\mdl 2,v)} (b,a)_c = 0.$$
\end{lemma}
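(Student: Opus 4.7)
The plan is to analyze how each component of $\delta_3^{(\mdl 2,v)}$ depends on the choice of lift $c$, then treat the three stated cases and the generic ``otherwise'' case. Reducing Proposition~\ref{8/10_delta_3_cocycle_form} modulo $2$ (signs disappearing) gives
$$\delta_{3,[[x,y],x]}^{(\mdl 2,v)}(b,a)_c = \{-b\}\cup c + {b\choose 2}\cup a,$$
$$\delta_{3,[[x,y],y]}^{(\mdl 2,v)}(b,a)_c = \{-a\}\cup c + \{-a\}\cup ab + {a\choose 2}\cup b + \overline{f}\cup a.$$
By~\ref{liftstopi/[pi]^2_3} any two lifts differ by replacing $c$ by $c+\epsilon$ for some $\epsilon \in Z^1(G_{k_v},\Z/2)$, which alters the two components by $\{-b\}\cup\epsilon$ and $\{-a\}\cup\epsilon$ respectively.

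Each of the three hypotheses kills the appropriate $c$-dependence. Because $\Z/2$ carries the trivial $G_{k_v}$-action, a class being zero in $\rH^1(G_{k_v},\Z/2)$ means the corresponding Kummer cocycle is itself zero. Thus (1) $\{-b\}=0$ collapses the first component to ${b\choose 2}\cup a$, and (2) $\{-a\}=0$ collapses the second to ${a\choose 2}\cup b + \overline{f}\cup a$, killing both the $\{-a\}\cup c$ and $\{-a\}\cup ab$ terms. For (3) I will use $\{a\}=\{b\}$ in $\rH^1(G_{k_v},\Z/2)$ to pick compatible square roots making $a=b$ as $\Z/2$-valued cocycles; then $\{-a\}=\{-b\}$ as cocycles, so $(\{-a\}+\{-b\})\cup c$ vanishes at the cochain level, proving independence of the lift. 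To evaluate the sum, I use $a(g)^2=a(g)$ in $\Z/2$ to obtain $ab\equiv a$ as $\Z/2$-cochains, and combine this with the identity $\{a\}\cup\{a\}=\{-1\}\cup\{a\}$ from~\ref{cup_prod_table_Q_p} to conclude $\{-a\}\cup a=0$ in $\rH^2$. Since $b=a$ as cocycles gives $\cup b=\cup a$ in cohomology, the remaining terms assemble into $({a\choose 2}+{b\choose 2}+\overline{f})\cup a$.

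In the ``otherwise'' case, $\{-a\}$ and $\{-b\}$ are two distinct nonzero elements of the two-dimensional $\F_2$-vector space $\rH^1(G_{k_v},\Z/2)=k_v^\ast/(k_v^\ast)^2$ (see~\ref{cup_prod_table_Q_p}), and so they form an $\F_2$-basis. By local Tate duality --- equivalently, by the explicit cup-product table of~\ref{cup_prod_table_Q_p} --- the pairing $\rH^1\otimes\rH^1\to\rH^2(G_{k_v},\Z/2)\cong\Z/2$ is non-degenerate. Consequently the map
$$\epsilon\ \mapsto\ (\{-b\}\cup\epsilon,\ \{-a\}\cup\epsilon)$$
from $\rH^1(G_{k_v},\Z/2)$ to $\rH^2(G_{k_v},\Z/2)^{\oplus 2}$ is an isomorphism of two-dimensional $\F_2$-spaces. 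Starting from any lift $(b,a)_{c_0}$ I therefore pick $\epsilon$ so that adjusting by $\epsilon$ cancels both components simultaneously, yielding a lift on which $\delta_3^{(\mdl 2,v)}$ vanishes; by the definition in~\ref{sec_defndelta_3mod2} this gives $\delta_3^{(\mdl 2,v)}(b,a)=0$.

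The main technical obstacle is case (3): one must carefully distinguish cochain-level from cohomology-level equalities (especially $ab\equiv a$ as $\Z/2$-cochains) and invoke the mod-$2$ Steinberg-type identity $\{-a\}\cup\{a\}=0$ in $\rH^2$ at the right moment via~\ref{cup_prod_table_Q_p}. The rest of the argument is essentially linear algebra in the two-dimensional $\F_2$-space $\rH^1(G_{k_v},\Z/2)$, made possible by the assumption that $v$ has odd residue characteristic.
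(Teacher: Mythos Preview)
Your proof is correct and follows essentially the same approach as the paper: reduce Proposition~\ref{8/10_delta_3_cocycle_form} mod $2$, read off the $c$-dependence as $\{-b\}\cup\epsilon$ and $\{-a\}\cup\epsilon$, handle the three degenerate cases directly, and in the generic case use that $\{-a\},\{-b\}$ form a basis of $\rH^1(G_{k_v},\Z/2)$ together with nondegeneracy of the cup product to kill both components. One small remark: in case~(3) your phrase about ``picking compatible square roots'' is unnecessary, since the mod-$2$ Kummer cocycle is independent of the choice of square root and $\{a\}=\{b\}$ in $\rH^1$ already forces equality in $C^1(G_{k_v},\Z/2)$; and the vanishing $\{-a\}\cup a=0$ is most cleanly obtained from Example~\ref{Dbchoose2} (it is literally a coboundary), which is what the paper implicitly uses.
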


\begin{proof}
Changing the lift is equivalent to adding a $1$-cocycle $\epsilon \in C^1(G_{k_v},\Z/2\Z)$ to $c$ with the effect by Proposition~\ref{8/10_delta_3_cocycle_form} that 
\begin{equation} \label{eq:shiftMassey}
\delta_3^{\mdl 2}(a,b)_{c} - \delta_3^{\mdl 2}(a,b)_{c+\epsilon} = 
\{-b\} \cup \epsilon \cdot [[x,y],x]  + \{-a\} \cup \epsilon \cdot [[x,y],y].
\end{equation}
We abbreviate the differences componentwise by introducing the notation
$$
\Delta_{3,[[x,y],x]}^{2} (b,a,z)= \{-b\} \cup z,
$$
$$
\Delta_{3,[[x,y],y]}^{2} (b,a,z) = \{-a\} \cup z. 
$$
Using Proposition \ref{8/10_delta_3_cocycle_form}, case (\ref{loc_obst_-b=0}) and (\ref{loc_obst_-a=0}) are now immediate. In case (\ref{loc_obst_b=a}), note that Proposition \ref{8/10_delta_3_cocycle_form} implies that \[ \delta_{3,[[x,y],x]}^{(2,v)} (b,a)_{c} + \delta_{3,[[x,y],y]}^{(2,v)} (b,a)_{c} = (a + \frac{\chi -1}{2}) \cup (ab) + {b \choose 2} \cup a + {a \choose 2} \cup b + \overline{f} \cup a\] Since $a=b$, we have $(ab) = a^2= a$, so $$(a + \frac{\chi -1}{2}) \cup (ab) = (a + \frac{\chi -1}{2}) \cup a =0.$$ This equation and $a=b$ show case (\ref{loc_obst_b=a}).

If the map 
\begin{equation} \label{eq:varianceMassey}
\Delta \ : \ \rH^1(G_{k_v},\Z/2\Z) \to \rH^2(G_{k_v},\Z/2\Z) \oplus \rH^2(G_{k_v},\Z/2\Z)
\end{equation}
\[
z \mapsto \big(\Delta_{3,[[x,y],x]}^{2} (b,a,z), \Delta_{3,[[x,y],y]}^{2} (b,a,z)\big) = \big(\{-b\} \cup z,
\{-a\} \cup z\big)
\]
is surjective, namely by the nondegeneracy of the cup product pairing, if $\{-a\},\{-b\}$ forms a basis of 
$\rH^1(G_{k_v},\Z/2\Z)$, then for a suitable choice of correction term $z$, the corresponding $c$ will lead to a vanishing
\[
\delta_3^{\mdl 2} ((b,a)_c) = 0.
\]
Since $\rH^1(G_{k_v},\Z/2\Z) \cong \Z/2\Z \oplus \Z/2\Z$, the elements $\{-a\},\{-b\}$ form a basis unless at least one of cases (\ref{loc_obst_-b=0}), (\ref{loc_obst_-a=0}), or (\ref{loc_obst_b=a}) holds.
\qed \end{proof}

To compute when there is a local obstruction as in case (\ref{loc_obst_-b=0}) or (\ref{loc_obst_-a=0}), we need to understand the cochain $${a \choose 2}: G_{k_v} \rightarrow \Z/2$$ when $\{-a\}$ is $0$ in $C^1(G_{k_v}, \Z/2)$ cf. Examples \ref{Dbchoose2} and \ref{choose_rest_sqrt}.

\begin{lemma}\label{achoose2_when_-a=0}  Let $K$ be a field of characteristic $\not=2$, and let $\zeta_4$ be a primitive fourth root of unity in a fixed algebraic closure of $K$. Let $-x\in (K^\ast)^2$ be a square, and choose a fourth root $\sqrt[4]{x}$ of $x$ giving a Kummer cocycle $x : G_K \to \Z/4\Z(1)$ and the cochain ${ x \choose 2} : G_K \to \Z/2\Z$ obtained by the identification $\Z/4\Z(1) = \Z/4\Z(\chi)$ using $\zeta_4$. Then there is an equality of cocycles \[
{x \choose 2} = \{2 \sqrt{-x} \} : G_K \to \Z/2\Z
\] where $\sqrt{-x} = \zeta_4 (\sqrt[4]{x})^2$.
\end{lemma}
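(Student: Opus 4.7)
The plan is a direct cocycle computation. The key observation is the classical identity
\[
(1+\zeta_4)^2 = 1 + 2\zeta_4 + \zeta_4^2 = 2\zeta_4,
\]
which lets us exhibit an explicit square root of $2\sqrt{-x}$. Namely, since $\sqrt{-x} = \zeta_4(\sqrt[4]{x})^2$, we have
\[
2\sqrt{-x} = 2\zeta_4 (\sqrt[4]{x})^2 = \bigl((1+\zeta_4)\sqrt[4]{x}\bigr)^2,
\]
so I may take $\sqrt{2\sqrt{-x}} = (1+\zeta_4)\sqrt[4]{x}$. Using the Kummer formula \eqref{eq:Kummercocycle}, the cocycle $\{2\sqrt{-x}\}$ is then
\[
\{2\sqrt{-x}\}(\sigma) \;=\; \frac{\sigma(1+\zeta_4)}{1+\zeta_4}\cdot\frac{\sigma(\sqrt[4]{x})}{\sqrt[4]{x}} \;=\; \frac{1+\zeta_4^{\chi(\sigma)}}{1+\zeta_4}\cdot \zeta_4^{x(\sigma)} \;\in\;\{\pm 1\}.
\]

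Next, I would exploit the hypothesis $-x\in (K^\ast)^2$, which means $\sqrt{-x} = \zeta_4(\sqrt[4]{x})^2$ is Galois-invariant. Applying $\sigma$ yields the congruence
\[
\chi(\sigma) + 2\,x(\sigma)\;\equiv\;1\pmod 4,
\]
so $\chi(\sigma)\equiv 1\pmod 4$ when $x(\sigma)$ is even and $\chi(\sigma)\equiv -1\pmod 4$ when $x(\sigma)$ is odd. A short case check on $\zeta_4^{\chi(\sigma)}$ then gives $(1+\zeta_4^{\chi(\sigma)})/(1+\zeta_4) = 1$ in the even case, while in the odd case rationalization yields $(1-\zeta_4)/(1+\zeta_4) = -\zeta_4$.

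The final step is to compare, for each of the four residues of $x(\sigma)\in\Z/4\Z$, the value $\{2\sqrt{-x}\}(\sigma)$ obtained above with $(-1)^{\binom{x(\sigma)}{2}}$. One checks directly:
\begin{itemize}
\item $x(\sigma)=0$: product $= 1 = (-1)^0$;
\item $x(\sigma)=1$: product $= -\zeta_4\cdot\zeta_4 = 1 = (-1)^0$;
\item $x(\sigma)=2$: product $= \zeta_4^2 = -1 = (-1)^1$;
\item $x(\sigma)=3$: product $= -\zeta_4\cdot\zeta_4^3 = -1 = (-1)^3$.
\end{itemize}
Under the identification $\Z/2\Z=\{\pm 1\}$ via $\zeta_2=-1$, this matches $\binom{x(\sigma)}{2}\bmod 2$ in every case, proving the equality of cocycles $\{2\sqrt{-x}\}={x\choose 2}$.

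There is no real obstacle; the only mild subtlety is bookkeeping the identification $\Z/4\Z(1)=\Z/4\Z(\chi)$ via $\zeta_4$ (so that $\chi(\sigma)+2x(\sigma)\equiv 1\pmod 4$ has content), and noting that independence of the chosen sign of $\sqrt{2\sqrt{-x}}$ is automatic in degree $2$ Kummer theory (as already observed in \ref{Kummermod2cocycle}).
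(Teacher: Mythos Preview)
Your proof is correct and follows essentially the same approach as the paper: both identify the explicit square root $(1+\zeta_4)\sqrt[4]{x}$ of $2\sqrt{-x}$ via $(1+\zeta_4)^2 = 2\zeta_4$, and both reduce to a four-case check. The only cosmetic difference is that the paper parametrizes the cases by the pair $(g(\sqrt{\eta}),g(\zeta_4))$ and reads off $x(g)$, whereas you parametrize by $x(\sigma)\in\Z/4\Z$ and use the constraint $\chi(\sigma)+2x(\sigma)\equiv 1\pmod 4$ (derived from $\sqrt{-x}\in K$) to determine $\chi(\sigma)$; the computations are the same.
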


\begin{proof}
Note that squaring $(1+ \zeta_4)\sqrt[4]{x}$ gives $2 \sqrt{-x}$, so letting $\eta = 2 \sqrt{-x} \in K$, we see that $K(\sqrt[4]{x}, \zeta_4)=K(\sqrt{\eta}, \zeta_4)$. Both cochains ${ x \choose 2}, \{2 \sqrt{-x} \} : G_K \to \Z/2\Z$ factor through $\Gal(K(\sqrt[4]{x}, \zeta_4)/K)$, and there are four possibilities for the action of $g \in G_K$ on $\sqrt{\eta}$ and $\zeta_4$. It is enough to check that ${ x \choose 2}(g)$ and $\{\eta \}(g)$ agree in each case: 
\[
\begin{array}{c|c|c|c}
\quad g(\sqrt{\eta}) \quad & \quad g(\zeta_4) \quad & \ x(g) \ & \quad {x(g) \choose 2} \quad \\[1ex] \hline
\sqrt{\eta} & \zeta_4 &  (\zeta_4)^0 &   0 \\
\sqrt{\eta} & (\zeta_4)^3  & (\zeta_4)^1 & 0\\
-\sqrt{\eta} & \zeta_4 & (\zeta_4)^2  & 1 \\
-\sqrt{\eta} & (\zeta_4)^3  &  (\zeta_4)^3 & 1 \\
\end{array}
\]
This shows the claim ${x \choose 2} = \{\eta\} = \{ 2 \sqrt{-x} \}$.
\qed \end{proof}

To compute when there is a local obstruction as in case (\ref{loc_obst_b=a}), we need to understand the cochain $${a \choose 2} + {b \choose 2}: G_{k_v} \rightarrow \Z/2$$ when $a = b$ in $C^1(G_{k_v}, \Z/2)$.

\begin{lemma}\label{achoose2+bchoose2_when_a=b} Let $K$ be a field of characteristic $\not=2$, and let $\zeta_4$ be a primitive fourth root of unity in a fixed algebraic closure of $K$. Let $a,b$ be non-zero elements of $K$ such that $a/b$ is a square $a/b \in (K^\ast)^2$. Choose fourth roots of both, and let $a,b: G_{K} \rightarrow \Z/4(1)$ denote the corresponding cocyles. Then $${a \choose 2} + {b \choose 2}: G_{K} \rightarrow \Z/2$$ equals the cocycle $$ \{\sqrt{b}/\sqrt{a}\}: G_{K} \rightarrow \Z/2, $$ where $\sqrt{b}$, $\sqrt{a}$ are defined as the squares  of the chosen fourth roots of $b$, $a$ respectively.\end{lemma}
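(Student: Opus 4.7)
The plan is to prove the equality pointwise as cochains $G_K \to \Z/2$, by computing both sides explicitly in terms of a common uniformizing datum.

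First I would verify that the right-hand side makes sense. Set $\alpha = \sqrt[4]{a}$ and $\beta = \sqrt[4]{b}$ in $\overline{K}$, so $\sqrt{a} = \alpha^2$ and $\sqrt{b} = \beta^2$. Since $a/b \in (K^\ast)^2$, the element $(\alpha/\beta)^4 = a/b$ has a square root in $K^\ast$, hence $(\alpha/\beta)^2 = \sqrt{a}/\sqrt{b} \in K^\ast$ (up to a sign absorbed into the square root). Thus $\sqrt{b}/\sqrt{a} \in K^\ast$ and the Kummer cocycle $\{\sqrt{b}/\sqrt{a}\}: G_K \to \Z/2$ is defined.

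Next, I would reduce the computation to one in $\Z/4$. By inspection of Definition \ref{profinite_bin_coef} on the four elements of $\Z/4$, ${n \choose 2} \bmod 2$ equals $\lfloor n/2 \rfloor$ for $n \in \{0,1,2,3\}$. For each $g \in G_K$ write $a(g) = 2a_2(g) + r_a(g)$ and $b(g) = 2b_2(g) + r_b(g)$ with $r_a(g), r_b(g) \in \{0,1\}$; then
\[
\Bigl({a \choose 2} + {b \choose 2}\Bigr)(g) \;\equiv\; a_2(g) + b_2(g) \pmod 2.
\]

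Third, using that $\sqrt{a}/\sqrt{b} \in K^\ast$ is $G_K$-fixed and the mod-$2$ reductions of $a$ and $b$ are the Kummer cocycles $\{\sqrt{a}\}$ and $\{\sqrt{b}\}$, I would conclude $r_a(g) = r_b(g)$. Hence $a(g) - b(g) = 2d(g)$ with $d(g) := a_2(g) + b_2(g) \in \Z/2$. Finally, I compute directly
\[
\frac{g(\alpha/\beta)}{\alpha/\beta} \;=\; \zeta_4^{a(g)-b(g)} \;=\; \zeta_4^{2d(g)} \;=\; (-1)^{d(g)},
\]
which identifies $\alpha/\beta$ as a square root of $\sqrt{a}/\sqrt{b}$ realizing the Kummer cocycle $\{\sqrt{a}/\sqrt{b}\}(g) = d(g)$. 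Since $\{\sqrt{b}/\sqrt{a}\} = -\{\sqrt{a}/\sqrt{b}\} = \{\sqrt{a}/\sqrt{b}\}$ in $\Z/2$, we obtain $({a \choose 2} + {b \choose 2})(g) = d(g) = \{\sqrt{b}/\sqrt{a}\}(g)$, as required.

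The only mild obstacle is the bookkeeping with the identification $\Z/4(1)=\Z/4(\chi)$ via $\zeta_4$ and the two possible signs in $(\alpha/\beta)^2 = \pm \sqrt{a}/\sqrt{b}$; these ambiguities disappear after reduction mod $2$, since in $\Z/2$ additive and multiplicative signs are irrelevant. Everything else is a short case check analogous to the table used in the proof of Lemma~\ref{achoose2_when_-a=0}.
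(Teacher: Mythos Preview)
Your argument is correct. It differs from the paper's proof in its organization, though both are short and elementary.

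The paper proceeds by the binomial identity ${d_1+d_2 \choose 2} = {d_1 \choose 2} + {d_2 \choose 2} + d_1 d_2$ in $\Z/2$ for $d_1,d_2 \in \Z/4$, rewriting ${a \choose 2} + {b \choose 2}$ as ${a+b \choose 2} - a$ (using $a \equiv b \bmod 2$ so $ab \equiv a$). Then, since $ab \in (K^\ast)^2$, it invokes Example~\ref{choose_rest_sqrt} to identify ${a+b \choose 2}$ with the Kummer cocycle $\{\sqrt{a}\sqrt{b}\}$, and finishes with $\{\sqrt{a}\sqrt{b}\} - \{a\} = \{\sqrt{b}/\sqrt{a}\}$.

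You instead compute both sides directly: you observe that ${n \choose 2} \bmod 2$ is the ``upper bit'' $\lfloor n/2 \rfloor$ of $n \in \Z/4$, and then read off the Kummer cocycle $\{\sqrt{a}/\sqrt{b}\}$ from the action on the explicit square root $\alpha/\beta$. Your route is self-contained (it does not appeal to Example~\ref{choose_rest_sqrt}) and makes the bit-level meaning of ${\cdot \choose 2} \bmod 2$ transparent; the paper's route has the virtue of reusing the earlier lemma and of isolating the one algebraic identity that drives the computation. Either way the content is the same. Your closing remark about sign ambiguities disappearing mod $2$ correctly handles the only delicate point.
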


\begin{proof}
For any two elements $d_1,d_2$ in $\Z/4$, direct calculation shows that in $\Z/2$ $${d_1 + d_2 \choose 2} - {d_1 \choose 2} - {d_2 \choose 2} = d_1 d_2.$$

Therefore for all $g$ in $G_K$, $$ {a(g) \choose 2}+ {b(g) \choose 2} = {a(g) + b(g) \choose 2} - a(g)b(g)$$ Since mod $2$, $a(g) = b(g)$, we have that $a(g)b(g)=a(g)$ mod $2$. Therefore $$ {a(g) \choose 2}+ {b(g) \choose 2} = {a(g) + b(g) \choose 2} - a(g)$$

Since $b/a$ is a square in $K$, $ab$ is also a square in $K$. Let $\sqrt{a}$,$\sqrt{b}$ denote the squares of our chosen fourth roots of $a$ and $b$ respectively. 

By Lemma \ref{choose_rest_sqrt}, and because $ab$ is a square in $K$, $$ g \mapsto {a(g) + b(g) \choose 2}$$ equals $\{ \sqrt{a}\sqrt{b} \}$. Therefore $$ {a \choose 2}+ {b \choose 2} = \{\sqrt{a}\sqrt{b} \} - a.$$ Since $\{\sqrt{a}\sqrt{b} \} - a= \{\sqrt{b}/\sqrt{a} \}$, the lemma is shown.
\qed \end{proof}

The previous results combine to give necessary and sufficient conditions for $\delta_3^{(\mdl 2,v)}$ to obstruct a $k_v$-point of the Jacobian in the kernel of $\delta_2^{(\mdl 2,v)}$ from lying on the curve (i.e. from being the image of a $k_v$-point or tangential point of $X$). For any given point $(b,a)$ of $\Jac X$, these conditions are easy to verify using \ref{cup_prod_table_Q_p}.

\begin{theorem}\label{Prop_ploc_mod2_3nil_obs} 
Let $k_v$ be the completion of a number field $k$ at a place above an odd prime\hidden{, with fixed $\C \supset \overline{k} \subset \overline{k_v}$}. For $(b,a) \in \Jac(\proj^1_{k_v} - \{0,1,\infty\})(k_v)$ such that $\delta_2^{(\mdl 2,v)}(b,a)=0$, we have
$\delta_3^{\mdl 2} (b,a) \neq 0$ if and only if one of the following holds.
\begin{description}
\item[(i)] \label{Prop_loc_obs_b=-1} 
$-b \in (k_v^\ast)^2$ and $\{2 \sqrt{-b}\} \cup a \neq 0$.
\item[(ii)]  \label{Prop_loc_obs_a=-1} 
$-a \in (k_v^\ast)^2$ and $\{2 \sqrt{-a}\} \cup b + \{2\} \cup a \neq 0$
\item[(iii)]  \label{Prop_loc_obs_a=b} 
$ab \in (k_v^\ast)^2$ and $\{ 2 \sqrt{b} \sqrt{a} \} \cup a \neq 0$\end{description}
\end{theorem}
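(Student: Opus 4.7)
My plan is to use Lemma \ref{rank<2} as the backbone. That lemma already reduces the question to three ``structural'' cases --- $\{-b\}=0$, $\{-a\}=0$, or $\{b\}=\{a\}$ in $\rH^1(G_{k_v},\Z/2)$ --- outside of which $\delta_3^{\mdl 2}(b,a)$ vanishes for some choice of lift, which makes the corresponding side of the biconditional trivially false. The first observation to record is that these three structural conditions are either pairwise disjoint or hold all three simultaneously: any two force the third via the identity $\{-a\}+\{a\}=\{-1\}=\{-b\}+\{b\}$ in $\rH^1(G_{k_v},\Z/2)$.

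Next I would handle each ``exactly one structural condition'' scenario. Suppose $\{-b\}=0$ alone. By Tate local duality (valid since $k_v$ has odd residue characteristic), the variance map $z \mapsto (\{-b\}\cup z,\, \{-a\}\cup z)$ from the proof of Lemma \ref{rank<2} collapses to $z \mapsto (0,\,\{-a\}\cup z)$, which surjects onto the second summand of $\rH^2(G_{k_v},\Z/2)^{\oplus 2}$. Adjusting $c$ therefore kills $\delta_{3,[[x,y],y]}^{(2,v)}$ while leaving the fixed $\delta_{3,[[x,y],x]}^{(2,v)} = {b \choose 2}\cup a$. Lemma \ref{achoose2_when_-a=0} identifies ${b\choose 2}$ with $\{2\sqrt{-b}\}$, producing condition (i). The case where only $\{-a\}=0$ is symmetric, additionally using Lemma \ref{frakf2nilmod2} to substitute $\{2\}$ for $\overline{f}$ and yielding condition (ii). When only $\{a\}=\{b\}$, the image of the variance map becomes the diagonal of $\rH^2\oplus \rH^2$, so the sum of both components is fixed; Lemmas \ref{achoose2+bchoose2_when_a=b} and \ref{frakf2nilmod2} compute this sum as $\{2\sqrt{b}/\sqrt{a}\}\cup a$.

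To match the theorem's condition (iii), I would rewrite $\{2\sqrt{b}\sqrt{a}\}\cup a = \{2\sqrt{b}/\sqrt{a}\}\cup a + \{a\}\cup a$. Using the standard identity $\{a\}\cup\{a\} = \{-1\}\cup\{a\}$ for $\Z/2$ coefficients, the correction term equals $\{-1\}\cup\{a\}$, which vanishes in this structural case: the hypothesis $\delta_2^{(\mdl 2,v)}(b,a)=\{b\}\cup\{a\}=0$ combined with $\{a\}=\{b\}$ gives $\{a\}\cup\{a\}=0$, hence $\{-1\}\cup\{a\}=0$.

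Finally, in the case where all three structural conditions hold simultaneously, the variance map is identically zero, so both components of $\delta_3^{\mdl 2}$ are rigidly determined. Then $\delta_3^{\mdl 2}(b,a)=0$ iff conditions (i) and (ii) both fail; the sum formula from the previous paragraph then automatically forces (iii) to fail in that situation, so the ``at least one of (i), (ii), (iii)'' formulation is the correct necessary and sufficient condition. The main technical nuisance I anticipate is precisely this book-keeping across the overlap --- in particular verifying the identity $\{2\sqrt{b}\sqrt{a}\}\cup a = \{2\sqrt{b}/\sqrt{a}\}\cup a$ in the $\{a\}=\{b\}$ case, which is where the hypothesis $\delta_2^{(\mdl 2,v)}(b,a)=0$ is essential and which is the step most likely to require care with signs and choices of square roots.
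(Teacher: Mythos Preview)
Your proposal is correct and follows essentially the same route as the paper: reduce via Lemma~\ref{rank<2} to the three structural cases, then identify the invariant obstruction in each case using Lemmas~\ref{achoose2_when_-a=0}, \ref{achoose2+bchoose2_when_a=b}, and \ref{frakf2nilmod2}, including the same rewrite $\{2\sqrt{b}/\sqrt{a}\}\cup a = \{2\sqrt{b}\sqrt{a}\}\cup a$ via $a\cup a = b\cup a = 0$. You are actually more explicit than the paper about why sufficiency holds --- separating the ``exactly one'' cases (where the variance map kills the other component) from the ``all three'' case (where both components are rigid and (iii) is forced by (i) and (ii)) --- whereas the paper's proof compresses this into the single phrase ``It is sufficient to show'' and leaves that logic implicit in the proof of Lemma~\ref{rank<2}.
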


\begin{remark}\label{remarks_on_Prop_ploc_mod2_3nil_obs} In case (i), the notation $\sqrt{-b}$ denotes either square root of $-b$, both of which are in $k_v$. The expression $\{2 \sqrt{-b}\} \cup a$ denotes the corresponding element of $\rH^2(G_{k_v},\Z/2\Z)$, which is independent of the choice of square root because $$\{-1\} \cup a = \{-b\} \cup a - b \cup a = 0.$$ Similar remarks hold in cases (ii) and (iii).

Note that obstructions such as $\{2 \sqrt{-b}\} \cup a$ look as though they are naturally elements of $\rH^2(G_{\Q_p},\Z/2\Z(2))$, but $\delta_{3,[[x,y],x]}^{(2,p)} (b,a)_{c}$ is in $\rH^2(G_{\Q_p},\Z/2\Z(3))$. The shift in weight happened in Lemmas \ref{choose_rest_sqrt}, \ref{achoose2_when_-a=0}, and \ref{achoose2+bchoose2_when_a=b}.\end{remark}

\begin{proof}
Fix $\C \supset \overline{k} \subset \overline{k_v}$. It is sufficient to show that in Lemma \ref{rank<2} case (i) (ii) (iii) respectively, we have $${b \choose 2} \cup a = \{2 \sqrt{-b}\} \cup a,$$ $${a \choose 2} \cup b + \overline{f} \cup a =  \{2 \sqrt{-a}\} \cup b + \{2\} \cup a,$$ $$({a \choose 2} + {b \choose 2} + \overline{f}) \cup a = \{ 2 \sqrt{b} \sqrt{a} \} \cup a.$$ For cases (i) and (ii), this follows immediately from Lemmas \ref{achoose2_when_-a=0} and \ref{frakf2nilmod2}. In case (iii), Proposition \ref{achoose2+bchoose2_when_a=b} and \ref{frakf2nilmod2} show that $$ ({a \choose 2} + {b \choose 2} + \overline{f}) \cup a = \{2 \sqrt{b}/\sqrt{a}\} \cup a.$$ Since  $\sqrt{b}/\sqrt{a}$ is in $k_v$, we have that $a \sqrt{b}/\sqrt{a} = \sqrt{b}\sqrt{a}$ is in $k_v$. As $a \cup b = 0$ and $a = b$, it follows that $ \{2 \sqrt{b}/\sqrt{a}\} \cup a$ is also equal to $ \{2 \sqrt{b}/\sqrt{a}\} \cup a + a \cup a$, which in turn equals $\{ 2 \sqrt{b} \sqrt{a} \} \cup a$.
\qed \end{proof}

\begin{corollary}\label{local_mod2_delta2_3_cor_version}
Let $(b,a)$ be a rational point of $\Jac(\pmQ)$ such that $(b,a)$ is in $(\Z - \{0\}) \times (\Z - \{0\})$ and $p$ divides $ab$ exactly once. Then
 \begin{enumerate}
 \item \label{localdelta2genp} $\delta_2^{(\mdl 2,p)} (b,a) = 0 \iff a+b$ is a square mod $p$ 
 \item \label{localdelta3genp} When (\ref{localdelta2genp}) holds, $\delta_3^{(\mdl 2,p)} (b,a) = 0 \iff a+b$ is a fourth power mod $p$
 \end{enumerate}
\end{corollary}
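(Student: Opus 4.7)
The plan is to reduce both parts to the cup-product table of \S\ref{cup_prod_table_Q_p} together with Theorem \ref{Prop_ploc_mod2_3nil_obs}. Since exactly one of $v_p(a),v_p(b)$ equals $1$ while the other equals $0$, write $u$ for the $p$-adic unit among $\{a,b\}$ and $pw'$ for the other coordinate, with $w'$ a $p$-adic unit. Then $a+b\equiv u\pmod p$, so the conditions ``$a+b$ is a square (resp.\ fourth power) modulo $p$'' become ``$u$ is a square (resp.\ fourth power) modulo $p$''.

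For (1), I would invoke Proposition~\ref{delta2cocycle} to write $\delta_2^{(\mdl 2,p)}(b,a)=\{b\}\cup\{a\}$, then use the cup product table to reduce this to $\{p\}\cup\{u\}$, which vanishes if and only if $u$ is a square modulo $p$.

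For (2), I would assume $\delta_2^{(\mdl 2,p)}(b,a)=0$, so $u\in(\Q_p^\ast)^2$ by Hensel's lemma, and then eliminate two of the three conditions in Theorem \ref{Prop_ploc_mod2_3nil_obs}: condition (iii) fails because $v_p(ab)=1$ forces $ab\notin(\Q_p^\ast)^2$, and whichever of (i),(ii) involves the non-unit coordinate fails for the same valuation reason. Only the condition attached to $u$ survives. Using the cup product table together with the fact that $\{2\}\cup\{a\}=0$ (both factors are units at the odd prime $p$), this surviving condition simplifies to: $-u\in(\Q_p^\ast)^2$ and $2\sqrt{-u}$ is a non-square modulo $p$. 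The final step is to split on $p\bmod 4$.

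When $p\equiv 3\pmod 4$, $-u$ is a non-square (since $u$ is a square but $-1$ is not), so the surviving condition fails and $\delta_3^{(\mdl 2,p)}(b,a)=0$; simultaneously, $\gcd(4,p-1)=2$ forces squares and fourth powers in $\bF_p^\ast$ to coincide, so $u$ is automatically a fourth power, and both sides of (2) hold. When $p\equiv 1\pmod 4$, $-u$ is always a square, and the surviving condition reduces to whether $2\sqrt{-u}$ is a non-square modulo $p$. By Euler's criterion,
\[
(2\sqrt{-u})^{(p-1)/2}\equiv 2^{(p-1)/2}(-1)^{(p-1)/4}\,u^{(p-1)/4}\pmod p,
\]
and a case check on $p\bmod 8$ gives $2^{(p-1)/2}(-1)^{(p-1)/4}\equiv 1\pmod p$ (both factors equal $+1$ when $p\equiv 1\pmod 8$ and both equal $-1$ when $p\equiv 5\pmod 8$). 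Thus $2\sqrt{-u}$ is a square modulo $p$ if and only if $u^{(p-1)/4}\equiv 1\pmod p$, i.e.\ if and only if $u\equiv a+b\pmod p$ is a fourth power modulo $p$. This small Euler-criterion identity is the only nontrivial input beyond bookkeeping; I do not anticipate any serious obstacle.
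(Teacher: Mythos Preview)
Your proposal is correct and follows the same overall route as the paper: reduce part (1) to the cup-product table via Proposition~\ref{delta2cocycle}, and reduce part (2) to Theorem~\ref{Prop_ploc_mod2_3nil_obs} by eliminating cases (ii)/(i) and (iii) on valuation grounds, leaving only the condition on the unit coordinate $u$ and the quantity $2\sqrt{-u}$.

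The only genuine difference is in the final simplification when $p\equiv 1\pmod 4$. You compute $(2\sqrt{-u})^{(p-1)/2}$ directly via Euler's criterion and a case check on $p\bmod 8$ to get $u^{(p-1)/4}$. The paper instead uses the algebraic identity $(1+\zeta_4)^2=2\zeta_4$ in $\Q_p$ to conclude $\{2\sqrt{-u}\}=\{\sqrt{u}\}$ in $\rH^1(G_{\Q_p},\Z/2)$, from which the fourth-power condition is immediate. The two arguments are equivalent (your $p\bmod 8$ case check is precisely what is encoded in $(1+\zeta_4)^2=2\zeta_4$), but the paper's version avoids the split on $p\bmod 8$ and makes the identity $\{2\sqrt{-u}\}=\{\sqrt{u}\}$ available for reuse, while your version is more self-contained and does not require locating a fourth root of unity.
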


\begin{remark}\label{nilscp1rm}
(1) Note that under the hypotheses of Corollary~\ref{local_mod2_delta2_3_cor_version}, the condition that $a+b$ is congruent to a $(2^n)^{th}$ power mod $p$ is equivalent to the condition that whichever of $a$ or $b$ not divisible by $p$ is a $(2^n)^{th}$ power mod $p$.

(2) For the points of the Jacobian satisfying the conditions described in its statement, Corollary~\ref{local_mod2_delta2_3_cor_version} computes $\delta_3^{(\mdl 2,p)}$ and $\delta_2^{(\mdl 2,p)}$ in terms of congruence conditions mod $p$. These congruence conditions allow us to see that $\delta_3^{(\mdl 2,p)}$ and $\delta_2^{(\mdl 2,p)}$ vanish on the points and tangential points of the curve which satisfy the hypotheses of the Corollary. Namely, by \eqref{aj(x)_is_(x,1-x)} and Lemmas~\ref{AbelJacobi(tgtpnt)}, the image of $\proj^1 - \{0,1,\infty\}$ and its tangential points is the set of $(b,a)$ such that $a+b=0$, $a+b = 1$, $a=1$, or $b=1$. As $0$ and $1$ are fourth powers mod every prime, we see the vanishing of Ellenberg's obstructions on the points of the curve. 

(3) It is tempting to hope that under certain hypotheses
\begin{itemize}
\item $\delta_n^{(\mdl 2, p)}(b,a) = 0 \iff a+b $ is a $2^{n-1}$ power mod $p$.
\end{itemize}\hidden{Hypotheses are necessary, even discarding the case $a=1$ or $b=1$: note that if $a=p^2$ and $b$ is a square mod $p$ but not a fourth power, $\delta_3^{(\mdl 2, p)}(b,a) = 0$.} Since $0$ and $1$ are the only integers which are $2^{n-1}$ powers for every $n$, mod every prime, such a result could show that the nilpotent completion of $\pi$ determines the points and tangential points of $\proj^1 - \{0,1,\infty\}$ from those of the Jacobian.\hidden{Up to the difference between $H^1(G_{\Q}, \pi/[\pi]_2)$ and the points of the Jacobian--and this difference is a profinite completion,} This is a mod $2$, pro-nilpotent section conjecture for $\proj^1 - \{0,1,\infty\}$ cf. \ref{2nilseccon}.\end{remark}

\begin{proof}
To prove (\ref{localdelta2genp}): note that by hypothesis, exactly one of $b$ and $a$ equals $\mathfrak{p}$ or $u + \mathfrak{p}$ in $\rH^1(G_{\Q_p}, \Z/2(1))$, where the notation $\mathfrak{p}$,$u$ is as defined in \ref{cup_prod_table_Q_p}. By \ref{cup_prod_table_Q_p}, it follows that the other is $0$ in $\rH^1(G_{\Q_p}, \Z/2(1))$ if and only if $b \cup a$ vanishes. By Hensel's Lemma, this is equivalent to the other being a square mod $p$, which in turn is equivalent to the condition that $a+b$ is a square mod $p$.

To prove (\ref{localdelta3genp}): exactly one of $b$ and $a$ is not divisible by $p$. Call this element $r$. The only case listed in Theorem \ref{Prop_ploc_mod2_3nil_obs} that can hold is $\{-r\} = 0$ in $\rH^1(G_{\Q_p}, \Z/2\Z(1))$. By (\ref{localdelta2genp}), we have that $r$ is a square mod $p$, whence $\{-r\} = \{-1\}$ in $\rH^1(G_{\Q_p}, \Z/2\Z(1))$. Note also that if $r=a$, then $\overline{f} \cup a = \{2\} \cup a = 0$, as neither $2$ nor $a$ is divisible by $p$. Therefore, $\delta_3^{(\mdl 2,p)} (b,a) \neq 0$ if and only if $p = 1$ mod $4$, and $\{2 \sqrt{-r}\} \cup p \neq 0$. 

For $p = 1$ mod $4$, and $r$ and $-r$ squares mod $p$,  $$\{2 \sqrt{-r}\} \cup p = \{ \sqrt{r} \} \cup p$$ in $\rH^1(G_{\Q_p}, \Z/2\Z(1))$ where either square root of $r$ or $-r$ in $\Q_p$ can be chosen. To see this: note that since $-1$ is a square, it is clear that changing the square root has no effect. Note that $(1+ \xi_4)^2 = 2 \xi_4$, for $\xi_4$ a primitive fourth root of unity in $\Q_p$, from which it follows that $\{ \sqrt{r} \} = \{ \sqrt{r} (1+ \xi_4)^2 \} = \{2 \sqrt{-r} \}$. (In the last equality $\sqrt{-r}$ is $\xi_4 \sqrt{r}$, but we are free to choose either square root to see the claimed equality.)

Thus, $\delta_3^{(\mdl 2,p)} (b,a) \neq 0$ if and only if $p = 1$ mod $4$, and $\{ \sqrt{r} \} \cup p \neq 0$ in $\rH^1(G_{\Q_p}, \Z/2\Z(1))$. Note that $\{ \sqrt{r} \} \cup p \neq 0$ if and only if $\{ \sqrt{r} \} \neq 0$, since $r$ is not divisible by $p$. The condition $p = 1$ mod $4$ and $\{ \sqrt{r} \} \neq 0$ is equivalent to the condition $p = 1$ mod $4$ and $r$ is not a fourth power mod $p$. Since $r$ is a square mod $p$, the condition that $r$ is not a fourth power implies that $p = 1$ mod $4$. Thus, $\delta_3^{(\mdl 2,p)} (b,a) \neq 0$ if and only if $r$ is not a fourth power mod $p$. This last condition is equivalent to $a+b$ is not a fourth power mod $p$.
\qed \end{proof}

\begin{remark} The proof of Corollary~\ref{local_mod2_delta2_3_cor_version} only uses the computation of $\overline{f}$ given in \ref{frakf2nilmod2} to ensure that $f \in \{ 0, \frak{u}\} \subset \rH^1(G_{\Q_p}, \Z/2)$.\end{remark}

\begin{definition}
For an obstruction $\delta'$ which is defined on the vanishing locus of an obstruction $\delta$, we say that \textbf{$\delta'$ is not redundant with $\delta$} if $\delta'$ does not vanish identically.
\end{definition}

\begin{example} \label{delta32nontriv} We compare the obstruction $\delta_3^{\mdl 2}$ with $\delta_2$ for $k=\Q$.

(1) As $4 = (-1) + 5$ is a square  but not a fourth power mod $5$, Corollary~\ref{local_mod2_delta2_3_cor_version} implies that $\delta_2^{(\mdl 2,5)}(-1,5) = 0$, and 
$$
\delta_3^{(\mdl 2,5)}(-1,5) \neq 0.
$$ 
In other words, the $3$-nilpotent obstruction $\delta_3^{(\mdl 2,p)}$ is not redundant with the $2$-nilpotent obstruction $\delta_2^{(\mdl 2,p)}$. 

(2) In fact, it is easy to check that $\{-1\} \cup \{5\} = 0$ in $\rH^2(G_{\Q}, \Z/2\Z)$ since
\[
\{-1\} \cup \{5\} = \{-1\} \cup \{5\} + \{1-5\} \cup \{5\} = \{4\} \cup \{5\} = 2\cdot(\{2\} \cup \{5\}) = 0.
\]
Alternatively, $\{-1\} \cup \{5\} = 0$ in $\rH^2(G_{\Q}, \Z/2)$ because the Brauer-Severi variety $$-u^2 + 5 v^2 = w^2 $$ has the rational point $[u,v,w] = [1,1,2]$. Thus, $\delta_3^{(\mdl 2,p)}$ is not redundant with the global obstruction $\delta_2^{\mdl 2}$.  It also follows that the global $3$-nilpotent obstruction $\delta_3^{\mdl 2}$ is not redundant with the global $2$-nilpotent obstruction $\delta_2^{\mdl 2}$. 

(3) One can ask whether $\delta_3^{(\mdl 2,p)}$ is redundant with the global obstruction $\delta_2$ for $k = \Q$. The tame symbol at $p$ 
$$ 
b \otimes a \mapsto (b,a)_p= (-1)^{v_p(b) v_p(a)} \frac{b^{v_p(a)}}{a^{v_p(b)}} \in  \bF_p^\ast
$$ 
vanishes on any $(b,a)$ such that $\delta_2 (b,a) = 0$. In particular, given $b,a \in \Z$ such that $p$ divides $ab \not=0$ exactly once, we will have that 
$$
\frac{b^{v_p(a)}}{a^{v_p(b)}} = 1 \mod p
$$ 
and that $\frac{b^{v_p(a)}}{a^{v_p(b)}}$ equals either $b$ or $1/a$ depending on which of $b$ or $a$ is divisible by $p$. In particular, $a+b = 1$ mod $p$, and thus, Corollary~\ref{local_mod2_delta2_3_cor_version} does not show that $\delta_3$ is not redundant with $\delta_2$ for $k = \Q$.
\end{example}

The points $(b,a)$ of $\Jac (\pmQ)(\Q) = \Q^{\ast} \times \Q^{\ast}$ considered in Corollary~\ref{local_mod2_delta2_3_cor_version} and satisfying $\delta_2^{(\mdl 2,p)}(b,a) = 0$ have the property that at any finite prime p, either $b$ or $a$ determines the $0$ element of $$C^1(G_{\Q_p}, \Z/2) \cong \rH^1(G_{\Q_p}, \Z/2).$$ Thus a lift $(b,a)_c$ of $(b,a)$ to a class of $\rH^1(G_{\Q_p}, \pi /[\pi]_3^2)$ is such that $c$ is a cocycle, as opposed to a cochain. By Theorem \ref{delta_3_Massey_prod_8_10}, Corollary~\ref{local_mod2_delta2_3_cor_version} consists of evaluations of Massey products where certain cup products of cochains are not only coboundaries, but $0$ as a cochains. Indeed, a direct proof of Corollary~\ref{local_mod2_delta2_3_cor_version} can be given along these lines, although the methods involved are not sufficiently different from those of Theorem \ref{Prop_ploc_mod2_3nil_obs} to merit inclusion.

Let $p$ vary through the odd primes, and let $m$ vary through the positive integers. The points $((-p)^{2m+1},p)$ satisfy $\delta_2 = 0$ by Proposition~\ref{some_pts_in_Kerdelta_2}, but both $(-p)^{2m+1}$ and $p$ determine non-zero elements of $C^1(G_{\Q_p}, \Z/2)$ via the Kummer map, unlike the examples computed via Corollary~\ref{local_mod2_delta2_3_cor_version}. 
Theorem~\ref{Prop_ploc_mod2_3nil_obs} allows us to evaluate 
$\delta_3^{(\operatorname{mod} 2,p)}$ on these points.

\begin{example}  \label{compcalex}  
Let $p$ be an odd prime and $m$ a positive integer. Then $\delta_3^{(\operatorname{mod} 2,p)}$ vanishes on the following rational points of $\Jac (\pmQ)$ 
\begin{description}
\item[(1)] $(-p^{2m+1}, p)$,
\item[(2)] $(p^{2m},p)$.
\end{description}
For $(-p^{2m+1}, p)$ we show that neither case (i)-(iii) of Theorem~\ref{Prop_ploc_mod2_3nil_obs} holds. Note that $\{p^{2m+1} \}$ (resp.\ $\{-p\}$) is nontrivial in $\rH^1(G_{\Q_p}, \Z/2)$,  so case (i)  (resp.\ case (ii)) does not hold. 
When $p \equiv 3$ mod $4$, the class $\{ -p^{2m+1} \cdot p\} = \{-1\}$ is nontrivial in $\rH^1(G_{\Q_p}, \Z/2)$ and  case (iii)  does not hold.

For $p \equiv 1$ mod $4$, we have a fourth root of unity $\zeta_4 \in \Q_p$ and thus the product $-p^{2m+1}\cdot p$ is a square in $\Q_p$. In this case the first equation of (iii) is satisfied, but the second is not: 
\[
\{2\sqrt{-p^{2m+2}}\} \cup p = \{2p^{m+1}\zeta_4\} \cup p = \{2 \zeta_4\} \cup p = \{(1+\zeta_4)^2\} \cup p = 0
\]
because $p \cup p = 0$ by \ref{cup_prod_table_Q_p}.

For $(p^{2m},p)$ one shows similarly that $\delta_3^{(\operatorname{mod} 2,p)}(p^{2m},p) = 0$. Now cases (ii) and (iii) do not apply for obvious reasons and (i) can at most apply for $p \equiv 1$ mod $4$. But then
 $$ 
 \{ 2 \sqrt{-p^{2m}}\} \cup \{ p \} = (\{2\zeta_4\} + m\{p\}) \cup \{p\}  = \{(1+\zeta_4)^2 \cup p = 0
 $$ 
 vanishes as well.
\end{example}

\begin{example}\label{del32p((1-x)(-x),x)=0}
Let $p$ be a prime congruent to $3$ mod $4$. Let $x \in \Z - \{0,1\}$ be divisible by $p$. Then 
$$ 
\delta_3^{(\operatorname{mod} 2,p)}((1-x)(-x),x) = 0.
$$
Note that by Proposition~\ref{some_pts_in_Kerdelta_2}, the point $((1-x)(-x),x)$ is in the kernel of $\delta_2^{(\mdl 2,p)}$, and even in the kernel of $\delta_2$.

We again show that neither case (i)-(iii) of Theorem~\ref{Prop_ploc_mod2_3nil_obs} holds. The element $1-x$ is a square in $\Q_p$, as $1-x \equiv 1$ mod $p$, whence 
\[
\{ (1-x)(-x) \} = \{ -x \}  \in \rH^1(G_{\Q_p}, \Z/2).
\]
Since $p \equiv 3$ mod $4$, the class $\{-1\}$ is nonzero in $\rH^1(G_{\Q_p}, \Z/2)$. Therefore case (iii)  does not hold.
%
%
Case (ii) holds if and only if $\{-x\} = 0$ and
$$
\{2 \sqrt{-x} \}\cup \{(1-x)(-x)\} + \{2\} \cup \{x\} \neq 0.
$$ 
Since $(-x,x)$ is the image of a rational tangential base point by Lemma~\ref{AbelJacobi(tgtpnt)}, the obstruction $\delta_3^{(\mdl 2, p)} (-x,x) = 0$ vanishes.  By Theorem~\ref{Prop_ploc_mod2_3nil_obs} case (ii), this implies that 
$$
\{2 \sqrt{-x} \} \cup \{(-x)\} + \{2\} \cup \{x\} = 0
$$
when $\{-x\} = 0$, so (ii) does not hold for $((1-x)(-x),x)$, because $$\{(1-x)(-x)\} = \{(-x)\}.$$ 

Case (i) holds if and only if $\{(1-x)(x)\}= \{x\} = 0$ and 
$$
\{2 \sqrt{(1-x)x} \} \cup \{x\} \neq 0
$$ 
which is impossible.
\end{example}

\subsection{A global mod $2$ calculation}\label{loc_global_delta3mod2(-p^3,p)} The local calculations of~\ref{evaluation_delta32p} and~\ref{delta_3mod2R} allow us to evaluate the global obstruction $\delta_3^{\mdl 2}$ on $(-p^3,p)$ in $\Jac (\pmQ) (\Q)$. This evaluation relies on the 
Hasse--Brauer--Noether local/global principle for the ($2$-torsion) of the Brauer group, 
see \cite{coh_num_fields} Theorem 8.1.17,
\begin{equation} \label{eq:localglobalBr2tors}
0 \to \rH^2(G_{\Q}, \Z/2) \to \bigoplus_v \rH^2(G_{\Q_p},\Z/2\Z) \xrightarrow{\sum_v \inv_v} \frac{1}{2}\Z/\Z \to 0
\end{equation} but is more subtle, as the evaluation of $\delta_3^{\mdl 2}$ over $\Q$ depends on the lifts of a point of the Jacobian to $\rH^1(G_{\Q}, \pi/[\pi]_3^2)$, whereas each evaluation of $\delta_3^{(\mdl 2,p)}$ depends on the lifts to $\rH^1(G_{\Q_p}, \pi/[\pi]_3^2)$. One may not be able to find a global lift to restricting to some given set of local lifts.

In Proposition \ref{(b,(1-b)^{m^2})lifts_H1(Gk,pi/pi^m_4)}, it was shown that $$\delta_3^{\mdl 2} (b, (1-b)^4) = 0$$ for $k$ a number field and $b$ in $k- \{0,1\} = \pmk (k)$, giving a calculation of the global obstruction $\delta_3^{\mdl 2}$ on a point of the Jacobian not lying on the curve or coming from a tangential base point. However, the point $(b, (1-b)^4)$ determines the same element of $\rH^1(G_{\Q}, \pi/([\pi]_2 \pi^4))$ as the point $(b,1)$ which is the image of a rational tangential point by Lemma \ref{AbelJacobi(tgtpnt)}, so this vanishing is trivial. 

We now let $p$ vary through the primes congruent to $1$ mod $4$, and evaluate $\delta_3^{\mdl 2}$ on the family of points $(-p^3,p)$. Note that $(-p^3,p)$ does not determine the same element of $\rH^1(G_{\Q}, \pi/([\pi]_2 \pi^4))$ as a rational point or tangential point of $\pmQ$ by \eqref{aj(x)_is_(x,1-x)} and Lemma \ref{AbelJacobi(tgtpnt)}, so this gives a nontrivial calculation of $\delta_3^{\mdl 2}$ over $\Q$. 

\begin{proposition}\label{delta3mod2(-p^3,p)} Let $p$ be a prime congruent to $5$ mod $8$. Consider $(-p^3,p)$ in $\Jac(\pmQ) (\Q)$. Then $\delta_3^{\mdl 2} (-p^3,p) = 0$.\end{proposition}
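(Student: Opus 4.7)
The plan is to apply the Hasse--Brauer--Noether local-global principle \eqref{eq:localglobalBr2tors} to each of the two components $\delta_{3,[[x,y],x]}^{\mdl 2}$ and $\delta_{3,[[x,y],y]}^{\mdl 2}$, combining the local evaluations of Theorem \ref{Prop_ploc_mod2_3nil_obs} at odd primes with Proposition \ref{Prop_delta_3_2R} at the archimedean place and reciprocity at the prime $2$.

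First I would verify that $(-p^3,p)$ lies in $\Ker \delta_2^{\mdl 2}$ globally. By Proposition \ref{delta2cocycle}, $\delta_2^{\mdl 2}(-p^3,p) = \{-p^3\} \cup \{p\} = \{-1\}\cup \{p\} + 3\{p\}\cup\{p\}$ in $\rH^2(G_\Q,\Z/2)$, and the Steinberg-type relation $\{p\}\cup\{-p\}=0$ forces $\{p\}\cup\{p\}=\{-1\}\cup\{p\}$ mod $2$, so the class vanishes. By Corollary \ref{lifts(b,a)_c_of(b,a)}, some global cochain $c\in C^1(G_\Q,\Z/2)$ with $Dc=-\{-p^3\}\cup\{p\}$ defines a global lift $(-p^3,p)_c \in \rH^1(G_\Q,\pi/[\pi]_3^2)$, or equivalently a compatible defining system for the two Massey products in Theorem \ref{delta_3_Massey_prod_8_10} cf.\ Remark \ref{Massey_Thm_Remark}(ii).

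Next I would evaluate $\delta_3^{(\mdl 2,\nu)}(-p^3,p)_c$ at each place. The decisive calculation is at $\nu = p$, where $p \equiv 5 \pmod 8$ is used: since $p\equiv 1\pmod 4$, we have $\sqrt{-1}\in\Q_p$, so $(-p^3)\cdot p = -p^4$ is a square in $\Q_p$ and we are in case (iii) of Theorem \ref{Prop_ploc_mod2_3nil_obs}. Cases (i) and (ii) do not apply because $\{p^3\}=\{p\}\ne 0$ and $\{-p\}=\{p\}\ne 0$ in $\rH^1(G_{\Q_p},\Z/2)$. The relevant obstruction is $\{2\sqrt{-p^3}\sqrt{p}\}\cup\{p\} = \{2p^2\sqrt{-1}\}\cup\{p\}$, and the identity $(1+\sqrt{-1})^2 = 2\sqrt{-1}$ shows $\{2\sqrt{-1}\}=0$ in $\rH^1(G_{\Q_p},\Z/2)$, so already $\{2p^2\sqrt{-1}\}=0$ and the obstruction vanishes. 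At odd primes $\ell\neq p$ both $-p^3$ and $p$ are $\ell$-adic units, so every candidate obstruction in Theorem \ref{Prop_ploc_mod2_3nil_obs} is a cup product of two units, which vanishes by the table of \ref{cup_prod_table_Q_p}; the correction term $\overline{f}\cup\{p\}$ also vanishes at $\ell$ since $\overline{f}=\{2\}$ (Lemma \ref{frakf2nilmod2}) is a unit. At $\nu=\R$, Proposition \ref{Prop_delta_3_2R} yields some local lift making $\delta_3^{(\mdl 2,\R)}$ vanish, and one modifies the global $c$ by a class in $\rH^1(G_\Q,\Z/2)=\Q^\ast/(\Q^\ast)^2$ whose image in $\rH^1(G_\R,\Z/2)$ is $\{-1\}$ if needed so that the restriction of $c$ to $G_\R$ selects the vanishing lift; this adjustment does not affect the odd-prime calculations since those held for every local lift.

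Finally, for $\nu=2$ I would not compute the local obstruction directly: reciprocity \eqref{eq:localglobalBr2tors} forces $\sum_\nu\inv_\nu(\delta_3^{(\mdl 2,\nu)}(-p^3,p)_c)=0$ componentwise, and since every other local contribution vanishes, the contribution at $2$ vanishes as well. Applying Hasse--Brauer--Noether to each of the two components then gives $\delta_{3,[[x,y],x]}^{\mdl 2}(-p^3,p)_c = 0 = \delta_{3,[[x,y],y]}^{\mdl 2}(-p^3,p)_c$ in $\rH^2(G_\Q,\Z/2)$, hence $\delta_3^{\mdl 2}(-p^3,p)=0$. The main obstacle is the coordination step highlighted in Remark \ref{Poitou-Tate_remark}: one must exhibit a single global cochain $c$ whose restrictions simultaneously lie in the vanishing locus at every place, rather than merely finding a vanishing local lift at each place independently. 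The hypothesis $p\equiv 5\pmod 8$ is exactly what makes the calculation at $\nu=p$ go through for an arbitrary local lift, leaving only the $\R$ and $2$ places to be balanced against each other via reciprocity.
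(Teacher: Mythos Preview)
Your overall architecture is right --- local-global via \eqref{eq:localglobalBr2tors} with reciprocity handling the prime $2$ --- but the coordination step you flag at the end is not actually resolved. The assertion that at odd $\ell \neq p$ ``those held for every local lift'' is false, and likewise the claim that $p \equiv 5 \pmod 8$ makes $\delta_3^{(\mdl 2,p)}$ vanish for an \emph{arbitrary} lift is false. Theorem~\ref{Prop_ploc_mod2_3nil_obs} only says that \emph{some} local lift gives $\delta_3^{(\mdl 2,v)}=0$; by \eqref{eq:shiftMassey} the variation in the lift changes the two components by $(\{-b\}\cup\epsilon,\{-a\}\cup\epsilon)$, and at $\nu=p$ (where $\{-b\}=\{-a\}=\{p\}$) this is the diagonal, so as $c$ varies you hit both $(0,0)$ and $(\tfrac12,\tfrac12)$. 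Indeed Proposition~\ref{eval_delta_3_2_spec_lift} computes a specific global lift for which both components at $p$ equal $\tfrac12$ when $p\equiv 5\pmod 8$. At an odd $\ell\neq p$ with $\ell\equiv 3\pmod 4$ and $p$ a square mod $\ell$, one has $\{-a\}=\{-1\}\neq 0$ and $\{-1\}\cup\{\ell\}\neq 0$, so again the value genuinely depends on the restriction of $c$.

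The paper's proof closes this gap by passing to $\Gal(\Q_S/\Q)$ for $S=\{2,p,\infty\}$. All the Kummer classes and hence all defining systems can be taken there, and the injection $\rH^2(\Gal(\Q_S/\Q),\Z/2)\hookrightarrow\rH^2(G_\Q,\Z/2)$ lands in classes supported at $S$, so vanishing at $\ell\notin S$ is automatic for any such lift --- no place-by-place check is needed. The hypothesis $p\equiv 5\pmod 8$ is then used differently from how you use it: it forces $2$ to be a nonresidue mod $p$, so $\{2\},\{p\}$ span $\rH^1(G_{\Q_p},\Z/2)$, which means the global lifts unramified outside $S$ \emph{surject} onto the local lifts at $p$; hence one can choose a global $c$ restricting to the good local lift at $p$ supplied by Example~\ref{compcalex}. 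Finally, since $p\equiv 1\pmod 4$ the class $\{-1\}$ dies at $p$, so the $\{-1\}$-adjustment from Proposition~\ref{Prop_delta_3_2R} fixes $\R$ without disturbing $p$, and reciprocity handles $2$.
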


\begin{proof}
We evaluate the obstruction as triple Massey products with compatible defining systems by Theorem~\ref{delta_3_Massey_prod_8_10}
\begin{equation} \label{eq:-p^3,pMassey}
\delta_{3,[[x,y],x]}^{2} (-p^3,p) = \langle p^3,-p^3,p\rangle
\end{equation}
\[
\delta_{3,[[x,y],y]}^{2} (-p^3,p) = - \langle -p,p,-p^3\rangle - \{2\} \cup p
\]
valued in $\rH^2(G_{\Q},\Z/2\Z)$, using Lemma~\ref{frakf2nilmod2} to evaluate $\overline{f}$. 

Let $S = \{2,p,\infty\}$ and let $\Q_S$ denote the maximal extension of $\Q$ unramified outside $S$. Then all classes $\{-1\}$, $\{\pm p\}$, $\{2\}$, etc. involved in \eqref{eq:-p^3,pMassey} are unramified outside $S$, i.e. they already lie in $\rH^1(\Gal(\Q_S/\Q),\Z/2\Z)$. The map 
\begin{equation}\label{QsMod2Qmod2H2}
\rH^2(\Gal(\Q_S/\Q),\Z/2\Z) \inj \rH^2(G_\Q,\Z/2\Z)
\end{equation}
is injective. One way to see this injectivity is: let $\mathcal{O}_{\Q,S}$ denote the $S$ integers of $\Q$ and let  $U = \Spec \mathcal{O}_{\Q,S}$. The \'etale cohomology groups $\rH^*(U, \Z/2\Z)$ are isomorphic to the Galois cohomology groups $\rH^*(\Gal(\Q_S/\Q), \Z/2\Z)$ by \cite[Appendix 2 Prop 3.3.1]{Haberland}, and by the Kummer exact sequence, the sequence $$\rH^1(U, \G_m) \rightarrow \rH^2(U,\Z/2\Z) \rightarrow \rH^2(U,\G_m)$$ is exact in the middle. Since $\mathcal{O}_{\Q,S}$ is a principal ideal domain, $\rH^1(U, \G_m)=0$, and by \cite[III 2.22]{MilneEC}, the natural map $\rH^2(U,\G_m) \rightarrow \rH^2(G_{\Q},\G_m)$ is an injection. Thus  \eqref{QsMod2Qmod2H2} is injective. Its image consists of the classes whose image under \eqref{eq:localglobalBr2tors} have vanishing local component except possibly at $2$,$p$ and $\infty$.  It follows we can restrict to defining systems of cochains for $\Gal(\Q_S/\Q)$. Thus the Massey product takes values in $\rH^2(\Gal(\Q_S/\Q),\Z/2\Z)$ and the local components for primes not in $S$ vanish a priori.  

We will show the vanishing of a global lift at $p$ and $\infty$, and deduce the vanishing at $2$ from reciprocity \eqref{eq:localglobalBr2tors}. 

Since $p \equiv 5$ mod $8$, we have that $2$ is not a quadratic residue mod $p$, so $\{2\}$ and $\{p\}$ span $\rH^1(G_{\Q_p}, \Z/2)$ (as in \ref{cup_prod_table_Q_p}). Thus the set of lifts $(-p^3, p)_c$ where $c$ varies among the cochains factoring through $\Gal(\Q_S/\Q)$ surjects onto the set of all lifts of $(-p^3, p)$ to $\rH^1(G_{\Q_p}, \pi/[\pi]_3^2)$. By Example \ref{compcalex}, we can therefore choose such a lift such that $\delta_3^{(2,p)} (-p^3, p)_c = 0$.

Since $p$ is congruent to $1$ mod $4$, the restriction of $\{-1\}$ to $C^1(G_{\Q_p}, \Z/2\Z)$ is $0$. By Proposition \ref{Prop_delta_3_2R}, either $\delta_3^{\mdl 2}(-p^3, p)_c = 0$ or $\delta_3^{\mdl 2}(-p^3, p)_{c + \{-1\}} = 0$, so we can choose a lift factoring through $\Gal(\Q_S/\Q)$ such that $\delta_3^{\mdl 2}$ vanishes at both $p$ and $\R$. 
\qed \end{proof}

\begin{remark}\label{Poitou-Tate_remark} 
(1) In the proof of Proposition~\ref{delta3mod2(-p^3,p)}, the vanishing of $\delta_3^{\mdl 2}(-p^3,p)$ was shown for $p \equiv 5$ mod $8$ by using the local vanishing of $\delta_3^{(\mdl 2, \nu)}(-p^3,p)$ and showing that the global lifts of $(-p^3, p)$ to $\rH^1(G_{\Q}, \pi/[\pi]_3^2)$ with ramification constrained to lie above $S = \{2,p,\infty\}$ surjected onto the product over the local lifts of $(-p^3, p)$ to $\rH^1(G_{\Q_\nu}, \pi/[\pi]_3^2)$ for the places $p$ and $\infty$.

(2) It would be desirable to relate $\delta_3^{\mdl 2} = 0$ to the simultaneous vanishing of all (or all but one) $\delta_3^{(2,\nu)}$, where $\nu$ varies over the places of a given number field $k$. For this, we would need to compare the set of restrictions to the $k_{\nu}$ of lifts of $(b,a)$ to $H^1(G_{k}, \pi/[\pi]_3^2)$ with the set of independently chosen lifts of $(b,a)$ to $H^1(G_{k_{\nu}}, \pi/[\pi]_3^2)$ for all $\nu$. In other words, we are interested in the map \begin{equation}\label{loc_glob_lift_map} H^1_{(b,a)}(G_{k}, \pi/[\pi]_3^2) \rightarrow \prod_{\nu} H^1_{(b,a)}(G_{k_{\nu}}, \pi/[\pi]_3^2) \end{equation} where $H^1_{(b,a)}(G_{k}, \pi/[\pi]_3^2)$ denotes the subset of $H^1(G_{k}, \pi/[\pi]_3^2)$ of lifts of $(b,a)$ and similarly for each $H^1_{(b,a)}(G_{k_{\nu}}, \pi/[\pi]_3^2)$. A nonabelian version of Poitou-Tate duality would give information about (\ref{loc_glob_lift_map}). \end{remark}

We can also evaluate $\delta_{3,[[x,y],x]}^2$ and $\delta_{3,[[x,y],y]}^2$ on a specific lift of $(-p^3,p)$, which is equivalent to the calculation of the Massey products $\langle p^3, -p^3 , p\rangle$ and $\langle -p, p, -p^3\rangle$ with the defining systems specified in Remark \ref{Massey_Thm_Remark} (ii), and the mod $2$ cup product $\{2\} \cup \{ p \}$. The cup product $\{2\} \cup \{ p \}$ can be calculated with \ref{cup_prod_GQ_Z/2}; it vanishes except at $2$ and $p$, and at $p$, $\{2\} \cup \{ p \}$ vanishes if and only if $p \equiv \pm 1$ mod $8$. An arbitrary defining system for $\langle p^3, -p^3, p\rangle$ or $\langle -p, p, -p^3 \rangle$ produces Massey products differing from the originals by cup products, which can also be evaluated with \ref{cup_prod_GQ_Z/2}. So evaluating $\delta_{3,[[x,y],x]}^2$ and $\delta_{3,[[x,y],y]}^2$ on a specific lift allows for the computation of $\langle p^3,-p^3, p\rangle$ and $\langle -p, p, -p^3\rangle$ in $\rH^2(G_{\Q}, \Z/2)$ with any defining system. We remark that a complete computation of the triple Massey product on $\rH^1(\Gal(k_S(2)/k), \Z/2)$ for certain maximal $2$-extensions with restricted ramification $k_S(2)$ of a number field $k$ is given in \cite[II \S 1]{Vogel_thesis}.

\begin{remark}Note that $\{-p^3 \} = \{ -p \}$ and $\{p^3 \} = \{p \}$ in $\rH^1(G_{\Q}, \Z/2)$. The reason for distinguishing between, say, $-p^3$ and $-p$ in $\langle p^3 , -p^3, p \rangle$ is that the defining systems to evaluate $\delta_{3, [[x,y],x]}^2$ and $\delta_{3, [[x,y],y]}^2$ are different for $-p^3$ and $-p$, as they depend on the image of $-p^3$ in $\rH^1(G_{\Q}, \Z/4(1))$. However, for the discussion of evaluating triple Massey products of elements of $\rH^1(G_{\Q}, \Z/2)$ for any defining system, the distinction is of course irrelevant.\end{remark}

Consider the following lift of $(-p^3,p)$: choose compatible $n^{th}$ roots of $p$, and let $\{p\}$ denote the corresponding element of $C^1(G_{\Q}, \Zhat(1))$ via the Kummer map \ref{Kummermodncocycle}. (As above, $\{p\}$ will sometimes be abbreviated by $p$.) Note that the chosen $n^{th}$ roots of $p$ give rise to a choice of compatible $n^{th}$ roots of $-p^3$ such that the corresponding element of $C^1(G_{\Q}, \Zhat(1))$ is $3(p + \frac{\chi-1}{2})$. It is therefore consistent to let $\{-p^3\}$ and $-p^3$ denote $3(p + \frac{\chi-1}{2})$. Let $c_0 = 3 {p \choose 2}$ in $C^1(G_{\Q}, \Zhat(2))$. Let $(-p^3, p)_{c_0}$ in $C^1(G_{\Q}, \pi/[\pi]_3)$ be as in Corollary \ref{lifts(b,a)_c_of(b,a)}, i.e. for all $g$ in $G_{\Q}$ $$(-p^3, p)_{c_0}(g) = y^{\{p\}(g)}x^{\{-p^3\}(g)}[x,y]^{c_0(g)},$$so $(-p^3, p)_{c_0}$ is a cocycle lifting $(-p^3, p)$. The image of $(-p^3, p)_{c_0}$ under the map $C^1(G_{\Q}, \pi/[\pi]_3) \rightarrow C^1(G_{\Q}, \pi/[\pi]_3^2)$ will also be denoted $(-p^3, p)_{c_0}$.

\begin{proposition}\label{eval_delta_3_2_spec_lift} Let $p$ be a prime congruent to $1$ mod $4$. Let $(-p^3, p)_{c_0}$ be as above. $\delta_{3}^{\mdl 2}(-p^3, p)_{c_0}$ is the element of $H^2(G_{\Q}, [\pi]_3/[\pi]_4([\pi]_3)^2)$ determined by $$ \delta_{3,[[x,y],x]}^{(2,p)} (-p^3,p)_{c_0} = \delta_{3,[[x,y],y]}^{(2,p)} (-p^3,p)_{c_0}= 2 \cup p = \begin{cases} \frac{1}{2} & \text{if $p \equiv 5 \mod 8$,}
\\
0 &\text{if $p \equiv 1 \mod 8$.}
\end{cases}$$ $$  \delta_{3,[[x,y],x]}^{(2,\nu)} (-p^3,p)_{c_0} = \delta_{3,[[x,y],y]}^{(2,\nu)} (-p^3,p)_{c_0}= 0$$ for $\nu$ equal to $\R$ or a finite odd prime not equal to $p$.\end{proposition}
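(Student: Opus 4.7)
The approach is to substitute the explicit data $(b,a,c) = (-p^3,\, p,\, c_0 = 3\binom{p}{2})$ and the specified cocycle representatives $\{-p^3\} = 3(\{p\} + \frac{\chi-1}{2})$, $\{p\}$ into the cocycle formulas of Proposition \ref{8/10_delta_3_cocycle_form}, reduce modulo $2$ (so that each coefficient of $3$ becomes $1$), and evaluate the resulting two classes in $\rH^2(G_\Q,\Z/2)$ by computing their restrictions at each place.

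For a finite odd prime $\ell \neq p$, the Kummer cocycle $\{p\}|_{G_{\Q_\ell}}$ represents either $0$ or a unit class in $\rH^1(G_{\Q_\ell},\Z/2)$, and Lemma \ref{choose_rest_sqrt} together with a short descent argument identifies the classes of $\binom{p}{2}$ and $\binom{-p^3}{2}$ in $\rH^1(G_{\Q_\ell},\Z/2)$ with unit classes as well. By the mod $2$ cup product table in \ref{cup_prod_table_Q_p}, every pairing of two unit classes in $\rH^2(G_{\Q_\ell},\Z/2)$ vanishes, so both local components are $0$ at $\ell$. For $\nu = \infty$, choosing positive real $n$-th roots of $p$ makes $\{p\} : G_\R \to \Zhat(1)$ the zero cocycle, and every summand of the two reduced formulas (including the $\overline{f} \cup \{p\}$ term) contains $\{p\}$ as a factor, hence vanishes on the nose.

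The heart of the argument is at $\nu = p$. Since $p \equiv 1 \pmod 4$ gives $\{-1\} = 0$ in $\rH^1(G_{\Q_p},\Z/2)$, the restrictions of $\{-p^3\}$ and $\{-p\}$ both coincide with $\{p\}$. The arithmetic identities $\binom{-d}{2} = \binom{d}{2} + d^2$ and $\binom{3d}{2} = 9\binom{d}{2} + 3d$ in $\Zhat$, together with $n^2 \equiv n \pmod 2$ for $n \in \Zhat$, reduce $\binom{-p^3}{2}$ modulo $2$ to $\binom{p}{2}$ plus explicit Kummer correction terms. The resulting expression for $\delta_{3,[[x,y],x]}^{(\mdl 2,p)}(-p^3,p)_{c_0}$ is a graded-antisymmetric combination of $\binom{p}{2}$ and $\{p\}$, plus Kummer cup products. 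I evaluate the antisymmetric combination modulo coboundaries via Lemma \ref{D(cb)}, using $D\binom{p}{2} = -\{-p\} \cup \{p\}$ from Example \ref{Dbchoose2}; the remaining Kummer terms (such as $\{-1\} \cup \{p\}$) vanish because $\{-1\} = 0$ at $\Q_p$. Careful bookkeeping produces exactly $\{2\} \cup \{p\}$ as the final class. For $\delta_{3,[[x,y],y]}^{(\mdl 2,p)}$ the extra term $\overline{f} \cup \{p\} = \{2\} \cup \{p\}$ appears by Lemma \ref{frakf2nilmod2}, and either a parallel cochain computation or an application of the $x \leftrightarrow y$ involution from the proof of Proposition \ref{8/10_delta_3_cocycle_form} delivers the same value $\{2\} \cup \{p\}$. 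Finally, the local invariant of $\{2\} \cup \{p\}$ at $\Q_p$ equals $1/2$ if $2$ is a non-square modulo $p$ and $0$ otherwise, giving the claimed dichotomy $p \equiv 5$ versus $p \equiv 1 \pmod 8$.

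The main obstacle will be the cochain-level manipulation at $\nu = p$: extracting $\{2\} \cup \{p\}$ precisely (rather than up to some spurious term in $\{-1\}\cup\{p\}$ or $\{p\}\cup\{p\}$) from the graded-antisymmetric combination of $\binom{p}{2}$ and $\{p\}$ requires a disciplined application of Lemma \ref{D(cb)} and Example \ref{Dbchoose2}, and verifying that the $[[x,y],y]$-component exactly matches the $[[x,y],x]$-component is the most delicate step; this matching is not forced by any obvious symmetry of the specific lift $c_0 = 3\binom{p}{2}$ and must be checked on the nose.
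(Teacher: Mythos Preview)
Your approach is essentially correct and will work, but the paper streamlines two of your steps.

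\textbf{Away from $p$.} Your phrasing that $\binom{p}{2}$ and $\binom{-p^3}{2}$ ``are unit classes in $\rH^1(G_{\Q_\ell},\Z/2)$'' is imprecise: these are cochains, not cocycles, so they have no class in $\rH^1$. What you really mean is that every cochain appearing in the formula is unramified at $\ell$, so the resulting $2$-cocycle lands in $\rH^2(\widehat{\Z},\Z/2)=0$. The paper makes the same point more efficiently and globally: the cocycle $(-p^3,p)_{c_0}$ factors through $\Gal(\Q_S/\Q)$ with $S=\{2,p,\infty\}$, hence the obstruction class lies in the image of $\rH^2(\Gal(\Q_S/\Q),\Z/2)$ and is automatically trivial at every $\ell\notin S$. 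No local cup-product table needed.

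\textbf{At $p$.} The paper's computation of $\delta_{3,[[x,y],x]}^{(2,p)}$ is close to yours (it uses Proposition~\ref{delta_3_p1minus3_cocycle} rather than Proposition~\ref{8/10_delta_3_cocycle_form}, and the binomial identity $\binom{d_1+d_2}{2}=\binom{d_1}{2}+\binom{d_2}{2}+d_1d_2$ collapses everything to $\binom{-\frac{\chi-1}{2}}{2}\cup p$, which is recognised directly as $\{2\}\cup p$ from the action on $\zeta_8$). The genuine difference is the $[[x,y],y]$-component. You flag this as the ``most delicate step'' and propose either a parallel calculation or the involution from Proposition~\ref{8/10_delta_3_cocycle_form}. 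The paper avoids both: by Lemma~\ref{rank<2} case~(\ref{loc_obst_b=a}) (applicable since $\{-p^3\}=\{p\}$ in $\rH^1(G_{\Q_p},\Z/2)$), the sum $\delta_{3,[[x,y],x]}^{(2,p)}+\delta_{3,[[x,y],y]}^{(2,p)}$ is independent of the lift; and by Example~\ref{compcalex} there exists a lift on which both components vanish, so this sum is $0$. Hence $\delta_{3,[[x,y],y]}^{(2,p)}(-p^3,p)_{c_0}=\delta_{3,[[x,y],x]}^{(2,p)}(-p^3,p)_{c_0}$ for free, with no second computation required. This is exactly the symmetry you suspected was not forced --- it is forced, once you invoke Lemma~\ref{rank<2}.
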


Here, $\rH^2(G_{\Q_p},\Z/2\Z)$ is identified with the two torsion of $\Q/\Z$ for all finite primes $p$ via the invariant map, and elements of $H^2(G_{\Q}, \Z/2)$ are identified with their images under (\ref{eq:localglobalBr2tors}) .

\begin{proof}
Let $S = \{2,p,\infty\}$ and let $\Q_S$ denote the maximal extension of $\Q$ unramified outside $S$. The cocycle $(-p^3,p)_{c_0}$ factors through $\Gal(\Q_S/\Q)$, and it follows that $$\delta_{3,[[x,y],x]}^{(2,\nu)} (-p^3,p)_{c_0} = \delta_{3,[[x,y],y]}^{(2,\nu)} (-p^3,p)_{c_0}= 0$$ for $\nu$ equal to any prime not in $S$. 

The obstruction $\delta_{3}^{(2,\nu)} (-p^3,p)_{c_0}$ for $\nu = \R$ decomposes into two elements $\delta_{3,[[x,y],x]}^{(2, \R)}(-p^3,p)_{c_0}$ and $\delta_{3,[[x,y],y]}^{(2, \R)}(-p^3,p)_{c_0}$ of $\Z/2 \cong H^2(G_{\R},\Z/2)$, obtained by evaluating each of the cocycles given in Proposition \ref{delta_3_p1minus3_cocycle} at $(g_1, g_2) = (\tau,\tau)$, where $\tau$ denote complex conjugation in $G_{\Q}$ c.f. \ref{H2(G_R,Z/2)}. Note that since $p$ is positive, the equalities $\{-p^3 \}(\tau) = 1$, $\{p\}(\tau)=0$, $ {-\{p\}(\tau) + 1\choose 2} = 0$, and $c_0(\tau) =0$ hold in $\Z/2$. Substituting these equations into the cocycles in Proposition \ref{delta_3_p1minus3_cocycle} shows that $\delta_{3,[[x,y],x]}^{(2, \R)}(-p^3,p)_{c_0} = 0 $ and $\delta_{3,[[x,y],y]}^{(2, \R)}(-p^3,p)_{c_0} = 0$.

By Lemma \ref{rank<2} case (\ref{loc_obst_b=a}), we have that \begin{equation}\label{sump-p^3atp}\delta_{3,[[x,y],x]}^{(2,p)} (-p^3,p)_c + \delta_{3,[[x,y],y]}^{(2,p)} (-p^3,p)_c \end{equation} does not depend on the choice of lift. By Example \ref{compcalex}, there is a lift $(-p^3,p)_c$ such that $$\delta_{3,[[x,y],x]}^{(2,p)} (-p^3,p)_c = \delta_{3,[[x,y],y]}^{(2,p)} (-p^3,p)_c =0$$ and it follows that (\ref{sump-p^3atp}) vanishes.

It follows from Proposition \ref{delta_3_p1minus3_cocycle} that \begin{equation}\label{delta3[[x,y],x]2(-p^3,p)c_0_explicit}\delta_{3,[[x,y],x]}^{(2,p)} (-p^3,p)_{c_0} = ( {p \choose 2}  + { -(p + \frac{\chi -1}{2}) \choose 2} ) \cup p.\end{equation} To see this, note that $ \frac{\chi -1}{2} = 0$ and $-p^3 = p$ in $C^1(G_{\Q_p} ,\Z/2)$. Thus the cocycle $g \mapsto p(g) 3(p + \frac{\chi -1}{2}) (g)$ equals the cocycle $g \mapsto p(g)$ in $C^1(G_{\Q_p} ,\Z/2)$. (Note that equating these two cocycles requires identifying $\Z/2(1)$ with $\Z/2(2)$, so the weight is not being respected!) Substituting these equalities into Proposition \ref{delta_3_p1minus3_cocycle} implies $$\delta_{3,[[x,y],x]}^{(2,p)} (-p^3,p)_{c_0} = {p \choose 2} \cup p + { 3(p + \frac{\chi -1}{2}) + 1 \choose 2} \cup p + p \cup p.$$ Then note that in $C^1(G_{\Q_p} ,\Z/2)$, $${ 3(p + \frac{\chi -1}{2}) + 1 \choose 2} = { 3(p + \frac{\chi -1}{2}) \choose 2} + 3(p + \frac{\chi -1}{2}) = { -(p + \frac{\chi -1}{2}) \choose 2} + p,$$ showing (\ref{delta3[[x,y],x]2(-p^3,p)c_0_explicit}). (It is important to distinguish between $3(p + \frac{\chi -1}{2})$ and $p$ in the binomial coefficient as these two cocycles are not equal in  $C^1(G_{\Q_p} ,\Z/4(1))$.)

For any two elements $d_1,d_2$ in $\Z/4$, direct calculation shows $${d_1 + d_2 \choose 2} - {d_1 \choose 2} - {d_2 \choose 2} = d_1 d_2$$ in $\Z/2$. Thus $$ {p \choose 2}  + { -(p + \frac{\chi -1}{2}) \choose 2}  = {-\frac{\chi -1}{2} \choose 2} + p (p + \frac{\chi -1}{2}) = {-\frac{\chi -1}{2} \choose 2} + p.$$ Combining with the above, we see  $$ \delta_{3,[[x,y],x]}^2 (-p^3,p)_{c_0} = {-\frac{\chi -1}{2} \choose 2} \cup p.$$ Since $p$ is congruent to $1$ mod $4$, $\Q_p$ contains a primitive fourth root of unity and $\chi(g) \equiv 1 \mod 4$ for every $g$ in $G_{\Q_p}$. Therefore, $\frac{\chi(g) -1}{2}$ is $0$ or $2$ mod $4$, whence ${-\frac{\chi -1}{2} \choose 2}$ is $0$ for $g$ fixing the eight roots of unity and $1$ otherwise. It follows that $$ \delta_{3,[[x,y],x]}^2 (-p^3,p)_{c_0} = \begin{cases} \frac{1}{2} & \text{if $p \equiv 5 \mod 8$,}
\\
0 &\text{if $p \equiv 1 \mod 8$.}
\end{cases}$$
\qed \end{proof}

\bibliographystyle{PIA}


\bibliography{PIA}


\end{document}